\documentclass[11pt]{article} 
\usepackage[margin=1in]{geometry}

\usepackage{amssymb, amsmath, amsthm}
\usepackage{amsfonts,mathtools,mathrsfs,mathtools,color,bm}

\usepackage[bookmarksnumbered=true,
            bookmarksopen=true,
            colorlinks=true,
            citecolor=red,
            linkcolor=blue,
            anchorcolor=red,
            urlcolor=blue]{hyperref}
            
\newtheorem{theorem}{Theorem}
\newtheorem{proposition}{Proposition}
\newtheorem{lemma}{Lemma}
\newtheorem{corollary}{Corollary}

\theoremstyle{definition}

\newtheorem{remark}{Remark}

\newcommand{\polylog}{\mathrm{polylog}}
\newcommand{\poly}{\mathrm{poly}}

\newcommand{\cP}{\mathcal{P}}

\newcommand{\R}{\mathbb{R}} 
\renewcommand{\H}{H}
\newcommand{\N}{\mathcal{N}}
\newcommand{\E}{\mathbb{E}}

\newcommand{\loss}{\ell}

\renewcommand{\part}[2]{\frac{\partial #1}{\partial #2}}

\newcommand{\Var}{\mathrm{Var}}

\newcommand{\TV}{\mathrm{TV}}
\newcommand{\FI}{\mathsf{FI}}
\newcommand{\X}{\mathbf{X}}

\newcommand{\law}{\text{Law}}
\newcommand{\step}{\eta}
\newcommand{\error}{\varepsilon}
\newcommand{\bdw}{h}

\newcommand{\Cov}{\mathrm{Cov}}

\def\Var{\mathrm{Var}}
\def\Cov{\mathrm{Cov}}


\newcommand{\inparen}[1]{\left(#1\right)}

\newcommand{\insquare}[1]{\left[#1\right]}

\newcommand\numberthis{\addtocounter{equation}{1}\tag{\theequation}}

\usepackage{prettyref}
\newrefformat{eq}{(\ref{#1})}
\newrefformat{thm}{Theorem~\ref{#1}}
\newrefformat{sec}{Section~\ref{#1}}
\newrefformat{algo}{Algorithm~\ref{#1}}
\newrefformat{fig}{Fig.~\ref{#1}}
\newrefformat{tab}{Table~\ref{#1}}
\newrefformat{rmk}{Remark~\ref{#1}}
\newrefformat{clm}{Claim~\ref{#1}}
\newrefformat{def}{Definition~\ref{#1}}
\newrefformat{cor}{Corollary~\ref{#1}}
\newrefformat{lmm}{Lemma~\ref{#1}}
\newrefformat{lem}{Lemma~\ref{#1}}
\newrefformat{prop}{Proposition~\ref{#1}}
\newrefformat{app}{Appendix~\ref{#1}}
\newrefformat{ex}{Example~\ref{#1}}

\newcommand{\reals}{\mathbb{R}}

\newcommand{\Expect}{\mathbb{E}}

\newcommand{\prob}[1]{\mathbb{P}\left[#1\right]}

\newcommand{\indc}[1]{{\mathbf{1}\left\{{#1}\right\}}}

\newcommand{\pth}[1]{\left( #1 \right)}

\newcommand{\Iprod}[2]{\langle #1, #2 \rangle}

\newcommand{\bfi}{\mathbf{i}}

\newcommand{\KL}{\mathrm{KL}}

\newcommand{\stepa}[1]{\overset{(a)}{#1}}
\newcommand{\stepb}[1]{\overset{(b)}{#1}}
\newcommand{\stepc}[1]{\overset{(c)}{#1}}

\newcommand{\calB}{{\mathcal{B}}}

\newcommand{\calF}{{\mathcal{F}}}

\newcommand{\calI}{{\mathcal{I}}}

\newcommand{\calN}{{\mathcal{N}}}

\newcommand{\calP}{{\mathcal{P}}}

\newcommand{\calR}{{\mathcal{R}}}

\newcommand*\samethanks[1][\value{footnote}]{\footnotemark[#1]}

\begin{document}

\title{Optimal score estimation via empirical Bayes smoothing}
\author{Andre Wibisono\thanks{Department of Computer Science, Yale
University}, ~ 
Yihong Wu\thanks{Department of Statistics and Data Science, Yale
University \\
\indent\indent \texttt{andre.wibisono@yale.edu}, ~ \texttt{yihong.wu@yale.edu}, ~ \texttt{yingxi.yang@yale.edu}}, ~ and
Kaylee Yingxi Yang\samethanks[2]}

\date{}
\maketitle

\begin{abstract}
  We study the problem of estimating the score function of an unknown probability distribution $\rho^*$ from $n$ independent and identically distributed observations in $d$ dimensions. Assuming that $\rho^*$ is subgaussian and has a Lipschitz-continuous score function $s^*$, we establish the optimal rate of $\tilde \Theta(n^{-\frac{2}{d+4}})$ for this estimation problem under the loss function $\|\hat s - s^*\|^2_{L^2(\rho^*)}$ that is commonly used in the score matching literature, highlighting the curse of dimensionality where sample complexity for accurate score estimation grows exponentially with the dimension $d$. Leveraging key insights in empirical Bayes theory as well as a new convergence rate of smoothed empirical distribution in Hellinger distance, we show that a regularized score estimator based on a Gaussian kernel attains this rate, shown optimal by a matching minimax lower bound. We also discuss extensions to estimating $\beta$-H\"older continuous scores with $\beta \leq 1$, as well as the implication of our theory on the sample complexity of score-based generative models.
\end{abstract} 

\tableofcontents

\section{Introduction}

Sampling from a probability distribution is a fundamental algorithmic task in many applications; for example, in Bayesian statistics, we draw samples from the posterior distribution to perform approximate inference.
The {\em score function} of a distribution, which is defined as the derivative of the logarithm of the density of the distribution, encodes rich information about the distribution.
In particular, if we have access to the score function of a distribution, then we can sample from it by running any first-order sampling algorithm such as the Langevin dynamics or the Hamiltonian Monte Carlo.
Recent results have shown mixing time guarantees for such algorithms under structural assumptions on the target distribution, such as log-concavity or isoperimetry; see for example~\cite{dalalyan2017further,durmus2019analysis,bou2020coupling}.
More recently, an alternative method for sampling known as the ``Score-based Generative Models'' (SGMs) have been proposed, which operates via following the reverse diffusion process from a standard distribution such as the standard Gaussian to the target distribution; see for example~\cite{song2019generative, song2020score, HJA20}.
Implementing SGMs as an algorithm requires approximating the score function of the target distribution along the forward diffusion process; this can be done for example via score matching~\cite{hyvarinen2005estimation}, and in practice this is typically trained via neural networks.
A recent wave of theoretical results has shown that assuming we have access to a good sequence of score estimators along the forward process with provable error guarantees, then algorithms derived from SGMs have good mixing time guarantees, with the same or even better iteration complexity as classical algorithms such as based on the Langevin dynamics, but without requiring assumptions such as isoperimetry, log-concavity, or even smoothness on the target distribution; see for example~\cite{LLT22a,lee2023convergence,CCL+22,benton2023linear}.

Motivated by the wide applications of the score function, in this paper we study the problem of estimating the score function of a probability distribution from independent samples.
We assume the target distribution has full support on $\R^d$, is subgaussian, and has a Lipschitz-continuous score function. The Lipschitzness of the score function is a common assumption in sampling literature, including for analyzing classical Langevin-based algorithms~\cite{dalalyan2017further, dalalyan2017theoretical,  cheng2018convergence, durmus2019highdimensional, dalalyan2019user, durmus2019analysis, vempala2019rapid} and the newly developed SGMs~\cite{BMR20, DBTHD21, LLT22a, WY22, CLL2022, lee2023convergence, CCL+22}.
Let  $\calP_{\alpha, L}$ denote the class of probability distributions on $\R^d$ that are $\alpha$-subgaussian with $L$-Lipschitz score functions; here we assume $\alpha^2 L \ge 1$ to ensure that $\calP_{\alpha, L}$ is not empty.
Let $\rho^*$ be a probability density in $\calP_{\alpha, L}$.
The \textbf{score function} of $\rho^*$ is the vector field $s^\ast \colon \R^d \to \R^d$ defined by
\begin{align*}
    s^\ast(x) = \nabla \log \rho^*(x)
\end{align*}
for all $x \in \R^d$, where $\nabla$ is the gradient with respect to $x$.
Observing samples $\X_n=(X_1,\dots,X_n)$ drawn i.i.d.\ (independently and identically distributed) from $\rho^*$, our goal is to learn a \textbf{score estimator} $\hat s(\cdot) \coloneqq \hat s(\cdot;\X_n)$ that uses the samples to approximate the true score function $s^\ast$.
We measure the score estimation error using the following loss:
\begin{align}
\label{eq:LossFunction}
    \loss(\hat s, \rho^*) \coloneqq \|\hat s - s^\ast\|_{\rho^*}^2 \,=\, \int_{\R^d} \|\hat s(x) - s^\ast(x)\|^2 \rho^*(x)dx.
\end{align}
There are a number of reasons why this is a meaningful loss function for score estimation.
\begin{itemize}
    \item The loss function~\eqref{eq:LossFunction} is the relevant error metric in the application of score matching as assumed in the recent works in SGMs. For example, \cite{CCL+22} showed that 
    the sampling error of 
    a popular type of SGM known as 
    Denoising Diffusion Probabilistic Modeling 
    \cite{HJA20}
    can be bounded up to the score matching loss \eqref{eq:LossFunction} and discretization error.
    We will revisit this in \prettyref{sec:application-sgm} and
    discuss the implication of our results on SGMs.
    
\item 
If the estimator $\hat s$ is {\em proper}, i.e.\ it is the score function of a valid density $\hat \rho$, then \eqref{eq:LossFunction} equals the \textit{relative Fisher information} (or \textit{Fisher distance}) \cite[Eq.~(9.25)]{villani2021topics} between $\rho^*$ and $\hat \rho$. 
In this work, however, we do not limit the scope to proper score estimators.

\item In the special case when $\rho^\ast$ is a Gaussian mixture, the loss function \prettyref{eq:LossFunction} is precisely the \textit{regret}, the central quantity in the theory of \textit{empirical Bayes} (EB), that measures the excess risk of a data-driven procedure over the Bayesian oracle risk. Although our model class is far richer than Gaussian mixtures, this connection with empirical Bayes is crucial for developing our score estimator; see \prettyref{sec:idea}.

    \item Finally, it is also necessary to consider a squared loss weighted by the true density $\rho^*$ 
    as the score cannot be estimated well in low-density regions.
        Indeed, for the unweighted squared loss 
    $\|\hat s - s^\ast\|_2^2 = \int_{\R^d} \|\hat s(x) - s^\ast(x)\|^2 dx$,        
        it is easy to show that the minimax score estimation error is infinite.\footnote{For example, the densities $\rho_0=\calN(0,1)$ and 
    $\rho_1=\calN(\mu_n,1)$ are statistically indistinguishable with sample size $n$ for $\mu_n=2^{-n}$, but their score functions $s_0$ and $s_1$ differ by a constant, and hence $\|s_0-s_1\|_2=\infty$.}
\end{itemize}

The minimax risk of score estimation over the density class $\calP_{\alpha, L}$ under the loss function \prettyref{eq:LossFunction} with sample size $n$ is defined as
\begin{align}
\label{eq:MinimaxRisk}
    \calR_n(\calP_{\alpha, L}) \coloneqq \inf_{\hat{s}} \sup_{\rho^* \in \calP_{\alpha, L}} \E\loss(\hat s, \rho^*) 
\end{align}
where the expectation is over $\X_n = (X_1,\dots,X_n) \sim (\rho^*)^{\otimes n}$
and the infimum is taken over all estimators $\hat{s}$ that is measurable with respect to $\X_n$. 
Taking a step toward understanding the theoretical aspects of score estimation, we summarize the major contributions of this paper as follows:
\begin{enumerate}
    \item  We study a regularized score estimator based on the KDE (kernel density estimator) using Gaussian kernel.
    We analyze the performance of this estimator $\hat s$ (see~\eqref{Eq:KernelEst0}) under the loss function~\eqref{eq:LossFunction} 
    and establish an upper bound on the minimax risk as follows:    
    \begin{align}
    \label{eq:minimax-upper-bound}
        \sup_{\rho^* \in \mathcal{P}_{\alpha,L}}\E\loss (\hat s, \rho^*)  \lesssim n^{-\frac{2}{d+4}} \, \polylog(n).
    \end{align}
    When $\rho^*$ is such that the score function $s^*$ is $(L,\beta)$-H\"older continuous for some $0 < \beta \le 1$, using the same estimator, we extend the upper bound to the following:
    $$\E\loss (\hat s, \rho^*) \lesssim n^{-\frac{2\beta}{d+2\beta+2}} \polylog(n).$$
    
    \item We prove a matching minimax lower bound:
    \begin{align}
    \label{eq:minimax-lower-bound}
    \calR_n(\calP_{\alpha, L})
    \gtrsim n^{-\frac{2}{d+4}}
    \end{align}
    thereby showing that the optimal rate of score estimation is 
    $n^{-\frac{2}{d+4}}$ up to logarithmic factors.     
    The proof adapts the standard approach for establishing minimax lower bounds in nonparametric density estimation using Fano's lemma with modifications made for scores.
    Comparing the lower bound~\eqref{eq:minimax-lower-bound} with the upper bound~\eqref{eq:minimax-upper-bound}, we observe the typical ``curse of dimensionality'' which suggests that to achieve a specified level of accuracy in score estimation, the sample complexity must increase exponentially with dimension.
    
    \item We discuss some implications of our results in the context of SGMs. In particular, we propose a regularized score estimator along the Ornstein-Uhlenbeck (OU) process targeting standard Gaussian, which is the usual forward process in SGMs.
    For estimating the score function at time $t$ along the forward process, by using intermediate results in the proof of the upper bound~\eqref{eq:minimax-upper-bound}, we derive an error bound of $\Tilde{O}\inparen{\step^{-d/2}(tn)^{-1}}$ in the weighted squared loss, where $\step$ is the step size in the SGM algorithm.
\end{enumerate}

\subsection{Main idea}
\label{sec:idea}
Let us discuss the algorithm that attains the optimal rate \prettyref{eq:minimax-upper-bound} and the broad strokes of its analysis in connection to the empirical Bayes theory.
Let $\hat \rho = \frac{1}{n} \sum_{i=1}^n \delta_{X_i}$ denote the empirical distribution of the sample and its Gaussian smoothed version:
\begin{align}
    \hat \rho_\bdw = \hat \rho * \N(0, \bdw I_d)=
    \frac{1}{n} \sum_{i=1}^n \N(X_i, \bdw I_d).
    \label{eq:smooth-emp}
\end{align}
for some bandwidth parameter $\bdw > 0$. 
This Gaussian smoothing is an algorithmic device that allows us to tap into the powerful machinery of empirical Bayes. 
Instead of applying the score of $\hat \rho_\bdw$, we consider its  regularized version:
\begin{align}
\label{Eq:KernelEst0}
    \hat s^{\error}_{\bdw}(x) \,\coloneqq\, \frac{\nabla \hat{\rho}_\bdw (x) }{ \max(\hat{\rho}_\bdw (x), \error)}
\end{align}
for some  regularization parameter $\error > 0$. 
A deep result in empirical Bayes due to \cite{JZ09} (see also \cite{SG2020} for extensions to multiple dimensions) is that 
the error between regularized scores of two Gaussian mixtures is upper bounded within logarithmic factors by the squared Hellinger distance between the two mixtures. Applying this result to our setting
and bounding the likelihood ratio of $\frac{\rho^*}{\rho_h^*}$ using the score smoothness, we obtain
\begin{equation}
 \|\hat s_h^\error - s_\bdw^{\ast\error}\|_{\rho_h^*}^2 \lesssim
\|\hat s_h^\error - s_\bdw^{\ast\error}\|_{\rho^*}^2
\lesssim \frac{1}{h} H^2(\hat\rho_h,\rho_h^*) \cdot \polylog\pth{\frac{1}{H^2(\hat\rho_h,\rho_h^*) }, \frac{1}{\error}}.
    \label{eq:program1}
\end{equation}
Here $\rho_h^* = \rho^* * \calN(0,hI_d)$ is the smoothed version of the true density, $s_\bdw^{\ast\error} = \frac{\nabla \rho^\ast_\bdw}{\max(\rho^\ast_\bdw, \error)} $ is its regularized score, and 
the squared Hellinger distance between two densities $p$ and $q$ is
\begin{align*}
    \H^2(p,q) \coloneqq \int_{\R^d} \inparen{\sqrt{p(x)} - \sqrt{q(x)}}^2 dx.
\end{align*}

Next, we bound the Hellinger distance between the smoothed empirical distribution and the population:
\begin{equation}
\Expect[H^2(\hat\rho_h,\rho_h^*)] \lesssim \frac{1}{n h^{d/2}}\cdot \polylog(n).
    \label{eq:program2}
\end{equation}
Crucially relying on the smoothness of the score of $\rho^*$, this result seems \textit{not} obtainable from the literature on smooth empirical distribution based only on the subgaussianity of $\rho^*$  \cite{GGJ+2020,block2022rate}. (In fact, we suspect whether \prettyref{eq:program2} is true without the score smoothness. See \prettyref{lem:hellinger-bound} and the surrounding discussion in Appendix \ref{sec:bound-hellinger} for details.)

Finally, we control the error due to smoothing and regularization:
\begin{equation}
\|s_h^{\ast\error}-s^*\|_{\rho^*}^2 
\lesssim \frac{\error}{h} \polylog\pth{\frac{1}{\error},\frac{1}{h}} + h.
    \label{eq:program3}
\end{equation}
Since the regularization parameter only contributes logarithmically in \prettyref{eq:program1}, we may choose it rather aggressively as $\error=n^{-2}$. Balancing the main terms of 
$\frac{1}{nh^{1+d/2}}$ and $h$ leading to the optimal choice of bandwidth parameter $h=n^{-\frac{2}{d+4}}$ and the optimal rate \prettyref{eq:minimax-upper-bound}.

It is helpful to clarify the difference between classical empirical  Bayes and the present paper. In EB denoising, one observes i.i.d.\ samples $Y_1,\ldots,Y_n\sim \rho_h^* = \rho^* * \N(0,h I_d)$, where the variance parameter $h$ is fixed by the problem and the prior $\rho^*$ is an \textit{arbitrary} distribution with only tail assumptions (e.g.~subgaussian). Therein, the goal is to compete with the oracle who knows the prior $\rho^*$ and computes the Bayes estimator of $X_i\sim\rho^*$ given the noisy observation $Y_i$. Thanks to the 
Tweedie's formula \cite{efron2011tweedie}
\begin{equation}
\Expect[X_i \mid Y_i = y] = y + h s_h^\ast(y),
\label{eq:tweedie0}
\end{equation}
this is equivalent to estimating the score of $s_h^*$. 
Given an approximate score $\tilde s$, the regret of the approximate Bayes denoiser $\tilde  X(y) =   y + h \tilde s (y)$, i.e., the excess risk over the Bayesian oracle applied to a fresh observation, is given by the score matching loss \prettyref{eq:LossFunction}, namely $h^2 \ell(\tilde s,\rho_h^*)$. 

A popular method in EB (the so-called $g$-modeling approach \cite{efron2014two}) is to first compute an estimate $\tilde \rho$ of the distribution $\rho^*$ based on $Y_i$'s using deconvolution techniques, such as nonparametric maximum likelihood (NPMLE) \cite{JZ09,SG2020}, then bound the density estimation error $\tilde\rho_h = \tilde \rho * \N(0,hI_d)$ in Hellinger distance, and finally the regret of the regularized score of $\tilde \rho_h$ using tools such as \prettyref{eq:program1}. 
In our setting, since we have access to samples
$X_i$'s drawn from $\rho^*$ before convolution, we can directly apply the smoothed empirical distribution with an optimized bandwidth parameter $h$.

\subsection{Related work}
\label{sec:related}
\paragraph{Empirical Bayes.} 

As mentioned earlier, for Gaussian mixtures the score matching loss \prettyref{eq:LossFunction} and the empirical Bayes regret is equivalent. This connection can be made more precise. 
Consider the nonparametric class of Gaussian mixtures $\pi * \N(0,I_d)$, where the mixing distribution $\pi$ is supported on a ball of radius $r=O(1)$. In view of \prettyref{eq:tweedie0}, this is a subset of our model class $\calP_{\alpha,L}$ for some $\alpha,L$ depending on $r$.
On this subclass, the best score estimation error is at most 
$O(\frac{(\log n)^5}{n})$ \cite{JZ09} in one dimension (see \cite{SG2020} for extensions to  $d$ dimensions), achieved by the NPMLE.
A different approach is carried out in \cite{li2005convergence} based on KDE that applies (different) polynomial kernels of logarithmic degree to estimate the density and the derivative leads to similar results with worse logarithmic factors. 
In terms of negative results, for $d=1$, a lower bound
$\Omega\inparen{\frac{(\log n)^2}{(\log \log n)^2 n}}$ is shown in \cite{PW21}.
Compared with these near-parametric rates $\tilde\Theta(n^{-1})$, the nonparametric rate $\tilde\Theta(n^{-\frac{2}{d+4}})$ in \prettyref{eq:minimax-upper-bound} is much slower, as  the class $\calP_{\alpha,L}$ we consider is much richer than Gaussian mixtures.

\paragraph{Density Estimation.} 
We can view score estimation as a density estimation problem under a different measurement of the estimation error. 
In score estimation, if $\hat{s}$ is a proper estimator, i.e., $\hat s = \nabla \log \hat \rho$ for some distribution $\hat \rho$, then the loss function~\eqref{eq:LossFunction} is the relative Fisher information: 
\begin{equation}
\FI(\rho^* \,\|\, \hat{\rho}) := \|\nabla \log \rho^* - \nabla \log \hat{\rho}\|^2_{\rho^*},
    \label{eq:FI}
\end{equation} 
which depends on both the density itself and its first-order gradient information. 
In classical density estimation, common choices of the loss function include the squared $L^2(\R^d)$ loss 
$\|\rho^*- \hat{\rho}\|_{2}^2 \coloneqq \int_{\R^d} (\rho^*(x) - \hat{\rho}(x))^2 dx$
and the squared Hellinger distance 
$\H^2(\rho^*, \hat{\rho})$.
There is a rich literature on density estimation of nonparametric Gaussian mixtures. In one dimension, the optimal $L^2(\R)$ error is known to be $\Theta((\log n)^{1/2}/n)$;
for the squared Hellinger, the lower bound is $\Omega(\log n / n)$~\cite{Ibragimov2001, Kim2014}, and the upper bound is $O((\log n)^2/n)$ achieved by the NPMLE~\cite{JZ09}.
In $d$ dimensions, the optimal $L^2(\R^d)$ error is $\Theta((\log n)^{d/2}/n)$ \cite{KG2022};
for the squared Hellinger, the lower bound is $\Omega((\log n)^{d} / n)$~\cite{KG2022}, and the upper bound is~$O((\log n)^{d+1}/n)$ which is established for the NPMLE~\cite{SG2020}.

\paragraph{Score Estimation.}
While many methods have been proposed for estimating the score function, theoretical results are scarce. One approach involves density estimation techniques, such as kernel density estimation or neural network-based methods, followed by differentiation of their logarithm; see for example~\cite{scott1992, papamakarios2017masked}. 
Another approach estimates the unnormalized log-density and differentiates this estimate; this is effective since the score function does not depend on the normalizing constant. 
A popular method in this area is called score matching~\cite{hyvarinen2005estimation}; this method proceeds by minimizing the relative Fisher information between the data distribution and the learned model distribution, which is equivalent to the loss function~\eqref{eq:LossFunction} in the case that the score estimator is proper. It has been shown that the minimization of the score matching loss~\eqref{eq:LossFunction} is a consistent estimator assuming the global minimum is found by the optimization algorithm used in the estimation~\cite{hyvarinen2005estimation}.
The work of~\cite{sutherland2018efficient} uses the Nystr\"om approximation to speed up the score matching procedure to learn an exponential family density model with the natural parameter in a reproducing kernel Hilbert space, which may be infinite-dimensional, as introduced in~\cite{sriperumbudur2017density}.
The work of~\cite{zhou2020nonparametric} studies nonparametric score estimation via kernel ridge regression and proved the sample complexity of the resulting score estimator under some assumptions, including that the underlying score function can be written as the image of an integral operator in a reproducing kernel Hilbert space.
The work of~\cite{saremi2018deep} trained a neural network to minimize the score matching objective and output the energy--unnormalized log-density. 
For a review of modern approaches to energy-based model training, see for example~\cite{song2021train}.
Besides parameter estimation in unnormalized models, one can also train a neural network to directly output the score by minimizing the score matching objective~\cite{song2020sliced}.

\paragraph{Score-based Generative Models.}
Recent advancements in SGMs have focused on convergence analyses of the algorithms assuming access to an accurate score estimator. 
Initial studies either hinged on structural assumptions on the data distribution such as a log-Sobolev inequality~\cite{LLT22a, WY22} or strong assumptions on score estimation error such as $L^\infty$ error~\cite{DBTHD21}, or they led to bounds that exponentially increased with the problem parameters~\cite{debortoli2022, BMR20}. 
Subsequent studies have achieved polynomial convergence rates under less restrictive assumptions, including that the data distribution has a finite second moment and the scores along the forward process are Lipschitz~\cite{CLL2022, lee2023convergence, CCL+22}. 
More recent results including~\cite{benton2023linear} have established polynomial convergence guarantees under a minimal assumption of a finite second moment of the data distribution.
Parallel to these developments, significant efforts have been directed towards the problem of score estimation in SGMs, including the following. 
The work of~\cite{CHZ+2023} studied the score estimation using neural networks and derived a finite-sample bound for a specifically chosen network architecture and parameters, with the assumption that the data lies in a low-dimensional linear subspace.
The work of~\cite{pmlr-v202-oko23a} bounded the estimation error when using a neural network and showed that diffusion models are nearly minimax-optimal estimators in the total variation and in the Wasserstein distance of order one, assuming the target density belongs to the Besov space.
The work of~\cite{scarvelis2023closed} proposed to smooth the closed-form score from empirical distribution to obtain an SGM that can generate samples without training.
The work of~\cite{cui2023analysis} obtained an error rate of $\Theta(1/n)$ for SGM when the target distribution is a mixture of two Gaussians and using a two-layer neural network for learning the score function. 
The work of~\cite{cole2024score} showed that the score function can be approximated efficiently via neural networks when the target distribution is subgaussian and has a log-relative density with respect to the Gaussian measure which is a Barron function, i.e.\ can be approximated efficiently by neural networks. 
The work of~\cite{li2024good} studied SGM with score estimator from empirical kernel density estimator, similar to our work; they showed the sample complexity when the target distribution is either a standard Gaussian or has bounded support, and discussed the issue of memorization of training samples. 
A concurrent work by~\cite{zhang2024minimax} shows that when the target distribution belongs to the $\beta$-Sobolev space with $\beta \le 2$, the diffusion model with a kernel-based score estimator is minimax optimal up to logarithmic factors.

\subsection{Notations and definitions}

We review the necessary notations and definitions. Let $\calP(\R^d)$ denote the space of probability distributions on $\R^d$.
For distributions $\rho, \nu \in \calP(\R^d)$ which are absolutely continuous with respect to the Lebesgue measure on $\R^d$, for convenience we also write their probability density functions as $\rho \colon \R^d \to \R$ and $\nu \colon \R^d \to \R$.
Recall the total variation (TV) distance between $\rho$ and $\nu$ is
\begin{align*}
    \TV(\rho, \nu) = \sup_{A \subseteq \R^d} |\rho(A) - \nu(A)| \,=\, \frac{1}{2} \int_{\R^d} |\rho(x) - \nu(x)| \, dx.
\end{align*}
For a function $f\colon\R^d \to \R$ and a probability distribution $\rho$ on $\R^d$, the squared $L^2(\rho)$-norm of $f$ is
\begin{align*}
    \|f\|_{\rho}^2 := \E_\rho [f^2] = \int_{\R^d} f(x)^2 \,\rho(x) \, dx.
\end{align*}
We define $L^2(\rho)$ to be the space of functions $f \colon \R^d \to \R$ for which $\|f\|_\rho^2 < \infty.$
Similarly, given a vector field $s \colon \R^d \to \R^d$, the squared $L^2(\rho)$-norm of $s$ is
\begin{align*}
    \|s\|_{\rho}^2 := \E_\rho \left[\|s\|^2\right] = \int_{\R^d} \|s(x)\|^2 \, \rho(x) \, dx.
\end{align*}
We say a probability distribution $\rho$ on $\R^d$ is \textbf{$\alpha$-subgaussian} for some $0 < \alpha < \infty$ if for all $\theta \in \R^d$:
\begin{align*}
    \E_{\rho} \exp(\theta^\top (X - \E_\rho X)) \le \exp\inparen{\frac{\alpha^2 \|\theta\|^2}{2}}.
\end{align*}
We say a random variable $X \sim \rho$ is subgaussian if its distribution $\rho$ is subgaussian.

We use $a= O(b)$ or $b=\Omega(a)$ to indicate that $a \le Cb$ for a universal constant $C > 0$. We use $a = \Theta(b)$ to indicate that $C_1 b \le a \le C_2 b$ for $C_2 > C_1 > 0$. And $\Tilde{O}(\cdot)$ hides logarithmic factors.

\section{Main results}

\subsection{Score estimator via Empirical Bayes smoothing}
\label{Sec:UpperBound}

Suppose we are given a sample of $n$ i.i.d.\ observations  $\X_n = (X_1,\dots,X_n)$ from an unknown distribution $\rho^\ast \in \calP_{\alpha, L}$. Our goal is to estimate the score 
$s^\ast =\nabla \log \rho^\ast$.

For $\bdw > 0$, let $\hat \rho_\bdw$ be the smoothed empirical distribution which is a mixture of Gaussians:
\begin{align*}
    \hat \rho_\bdw = \frac{1}{n} \sum_{i=1}^n \N(X_i, \bdw I_d).
\end{align*}
We propose the following regularized KDE score estimator $\hat s^\error_\bdw(\cdot) = \hat s^\error_\bdw(\cdot;\X_n)$:
\begin{align}
\label{Eq:KernelEst}
    \hat s^{\error}_{\bdw}(x) \,\coloneqq\, \frac{\nabla \hat{\rho}_\bdw (x) }{ \max(\hat{\rho}_\bdw (x), \error)}
\end{align}
for some bandwidth parameter $\bdw > 0$ and regularization parameter $\error > 0$. 

We measure the accuracy of the score estimator $\hat s^\error_\bdw$ in the expected square $L^2(\rho^\ast)$-norm as in~\eqref{eq:LossFunction}, and establish the following error bound on the order of $\tilde O(n^{-\frac{2}{d+4}})$.
Here the expectation is taken over the sample $\X_n \sim (\rho^\ast)^{\otimes n}$.

\begin{theorem}
\label{thm:l2-error-kde}
Let $d \ge 1$ be fixed, and suppose we have $X_1,\dots,X_n$ drawn i.i.d.\ from some $\rho^\ast \in \cP_{\alpha,L}$.
Setting
\begin{align*}
\error = n^{-2}
\qquad\text{ and }\qquad
\bdw = \inparen{\frac{d^3 (\alpha^2 \log n)^{d/2}}{L^2 n}}^{\frac{2}{d+4}},
\end{align*}
for sufficiently large $n$, the score estimator~\eqref{Eq:KernelEst} satisfies
\begin{align*}
    \E\loss(\hat s_\bdw^\error, \rho^\ast) \le C d \alpha^2 L^2 \pth{\log n}^{\frac{d}{d+4}} n^{-\frac{2}{d+4}}
\end{align*}
where $\loss(\cdot, \cdot)$ is defined in~\eqref{eq:LossFunction}, and $C >0$ is a universal constant.
\end{theorem}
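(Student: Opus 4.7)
The plan is to execute the three-step program sketched in \prettyref{sec:idea}, combining a bias–variance style decomposition with the empirical Bayes regret bound.

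First, I would write the score matching loss via the triangle inequality around the regularized smoothed population score $s^{*\error}_\bdw(x) = \nabla\rho^*_\bdw(x)/\max(\rho^*_\bdw(x),\error)$:
\begin{align*}
\loss(\hat s_\bdw^\error,\rho^*)
\,\le\, 2\,\|\hat s_\bdw^\error - s_\bdw^{*\error}\|^2_{\rho^*}
\,+\, 2\,\|s_\bdw^{*\error} - s^*\|^2_{\rho^*}.
\end{align*}
The first term is the \emph{stochastic} (estimation) error; the second is the \emph{deterministic} smoothing-plus-regularization bias. My strategy is to bound each separately and then tune $(\bdw,\error)$.

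For the estimation term, I would first replace the outer measure $\rho^*$ by $\rho^*_\bdw$. Since $\rho^*\in\calP_{\alpha,L}$ has an $L$-Lipschitz score, the log-density changes by at most a quadratic in the convolution displacement, so one can show that the likelihood ratio $\rho^*/\rho^*_\bdw$ is bounded pointwise by $\exp(O(L\bdw\cdot\mathrm{polylog}))$ on a high-probability subgaussian annulus; thus $\|\cdot\|^2_{\rho^*} \lesssim \|\cdot\|^2_{\rho^*_\bdw}$ up to log factors. I would then invoke the empirical Bayes regret inequality of Jiang–Zhang (for $d=1$) and Saha–Guntuboyina (for general $d$), which bounds the regularized-score discrepancy between two Gaussian location mixtures with common bandwidth $\bdw$ by their squared Hellinger distance, giving \prettyref{eq:program1}:
\begin{align*}
\|\hat s_\bdw^\error - s_\bdw^{*\error}\|^2_{\rho^*}
\,\lesssim\, \frac{1}{\bdw}\, \H^2(\hat\rho_\bdw,\rho^*_\bdw)\cdot \polylog\!\pth{\tfrac{1}{\H^2(\hat\rho_\bdw,\rho^*_\bdw)},\tfrac{1}{\error}}.
\end{align*}
Taking expectation and using Jensen/concavity with the Hellinger rate \prettyref{eq:program2} (which the paper proves in its appendix and which crucially uses $L$-Lipschitzness of $s^*$) yields
\begin{align*}
\E\|\hat s_\bdw^\error - s_\bdw^{*\error}\|^2_{\rho^*}
\,\lesssim\, \frac{(\alpha^2\log n)^{d/2}}{n\,\bdw^{\,1+d/2}}\cdot \polylog(n,1/\error).
\end{align*}

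For the deterministic bias I would establish \prettyref{eq:program3}. Writing $s_\bdw^* = \nabla\rho^*_\bdw/\rho^*_\bdw$ (the true smoothed score) and comparing to $s^*$ via Stein/Tweedie, the $L$-Lipschitz assumption on $s^*$ gives $\|s_\bdw^* - s^*\|^2_{\rho^*}\lesssim L^2 d\,\bdw$ by a second-order Taylor expansion under the Gaussian convolution. The extra cost of capping the denominator at $\error$ is supported on $\{\rho^*_\bdw\le\error\}$, which sits in a low-density tail; subgaussianity of $\rho^*_\bdw$ controls $\rho^*(\{\rho^*_\bdw\le\error\})$ by $\polylog(1/\error)\cdot\error/\bdw$ (after bounding the gradient magnitude on that set using Lipschitzness again). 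Combining,
\begin{align*}
\|s_\bdw^{*\error}-s^*\|^2_{\rho^*}
\,\lesssim\, L^2 d\,\bdw + \frac{\error}{\bdw}\polylog\!\pth{\tfrac{1}{\error},\tfrac{1}{\bdw}}.
\end{align*}

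Finally I would substitute $\error=n^{-2}$, which makes the $\error/\bdw$ term $n^{-\Omega(1)}$ and only inflates polylog factors in the stochastic term, and then choose $\bdw$ to balance $(\alpha^2\log n)^{d/2}/(n\bdw^{1+d/2})$ against $L^2 d\,\bdw$, giving exactly the prescribed $\bdw = (d^3(\alpha^2\log n)^{d/2}/(L^2 n))^{2/(d+4)}$ and the advertised rate $d\alpha^2 L^2(\log n)^{d/(d+4)} n^{-2/(d+4)}$.

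The step I expect to be the main obstacle is the control of the low-density region $\{\rho^*_\bdw\le\error\}$ in the bias term and the matching likelihood-ratio exchange $\rho^*\leftrightarrow\rho^*_\bdw$ in the stochastic term: both require that the score smoothness translates into quantitative tail regularity of $\rho^*$ (and its convolution), with constants tracked carefully in $\alpha,L,d$ to produce the stated explicit prefactor. The Hellinger bound \prettyref{eq:program2} is the other critical input, but it is taken as given from the paper's appendix; everything else is a bookkeeping exercise chaining \prettyref{eq:program1}–\prettyref{eq:program3} and optimizing $\bdw$.
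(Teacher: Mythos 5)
Your proposal is correct and follows essentially the same route as the paper: change of measure between $\rho^*$ and $\rho^*_\bdw$ via the pointwise likelihood-ratio bound from score-Lipschitzness, the Saha--Guntuboyina/Jiang--Zhang empirical Bayes regret inequality to reduce the stochastic term to a Hellinger distance, the new Hellinger rate for the smoothed empirical distribution, and a Tweedie-type argument for the $O(L^2 d\bdw)$ smoothing bias and $\tilde O(\error/\bdw)$ regularization error, followed by balancing. The only cosmetic differences are that the paper uses a single three-term decomposition under $\rho^*_\bdw$ after changing measure once upfront (whereas you bundle regularization and smoothing bias and change measure only on the stochastic term), and the likelihood-ratio bound actually holds pointwise everywhere (Lemma~\ref{lmm:ratio}), so no restriction to a high-probability annulus is needed.
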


\begin{proof}
We provide the main argument for proving Theorem~\ref{thm:l2-error-kde}, deferring some of the lemmas to the appendix.
We define:
\begin{equation}
\rho^\ast_\bdw = \rho^\ast * \N(0, \bdw I_d), \qquad s^\ast_\bdw = \nabla\log \rho^\ast_\bdw, \qquad \textnormal{and}\quad s_\bdw^{\ast\error} = \frac{\nabla \rho^\ast_\bdw}{\max(\rho^\ast_\bdw, \error)}.
\label{eq:def3}
\end{equation}

Since $s^\ast$ is $L$-Lipschitz, we can show that the density ratio of $\rho^\ast$ to $\rho^\ast_\bdw$ is bounded from above everywhere by a constant:
In Lemma~\ref{lmm:ratio} in Appendix~\ref{sec:preliminaries}, we show that for all $x \in \R^d$,
\begin{align*}
    \frac{\rho^\ast(x)}{\rho^\ast_\bdw(x)} \le \exp\inparen{dL\bdw/2}.
\end{align*}
Then by a change of measure from $\rho^\ast$ to $\rho^\ast_\bdw$, we get
\begin{align}
\label{eq:change-of-measure-ub}
\E\loss(\hat s^{\error}_{\bdw}, \rho^\ast) =  \E \| \hat s^{\error}_{\bdw} - s^\ast \|^2_{\rho^\ast} \le\exp\inparen{dLh/2} \E \| \hat s^{\error}_{\bdw} - s^\ast \|^2_{\rho^\ast_\bdw}.
\end{align}
We can decompose the last factor on the right-hand side above as follows:
\begin{align}
\label{eq:decomposition}
     \E \| \hat s^{\error}_{\bdw} - s^\ast \|^2_{\rho^\ast_\bdw}
    \le\; 3\E \| \hat s^{\error}_{\bdw} - s^{\ast\error}_{\bdw} \|^2_{\rho^\ast_\bdw} \,+\, 3 \| s^{\ast\error}_{\bdw} - s^{\ast}_{\bdw} \|^2_{\rho^\ast_\bdw}  +\, 3 \| s_{\bdw}^{\ast} - s^{\ast}  \|^2_{\rho^\ast_\bdw}.
\end{align}
We now bound each of the three terms above separately.

\paragraph{First term:}
The first term $\E\| \hat s^{\error}_{\bdw} - s^{\ast\error}_{\bdw} \|^2_{\rho^\ast_\bdw}$ concerns the distance between the regularized score functions of $\hat \rho_\bdw$ and of $\rho^\ast_\bdw$, which we can bound as follows:
If $\error \in (0, \, (2\pi \bdw)^{-d/2}e^{-1/2}]$ and $ \alpha^2 n^{-2/d}\log n \lesssim \bdw \leq 1/(4L)$, then by Lemma~\ref{lem:bound-c1} in Appendix~\ref{sec:pf-bound-c1},
\begin{align}
\label{eq:upper-bound-1}
\E\| \hat s^{\error}_{\bdw} - s^{\ast\error}_{\bdw} \|^2_{\rho^\ast_\bdw} \le \frac{Cd\inparen{C_{\bdw, d, \alpha}+d}}{n\bdw}\insquare{ \left( \log \frac{1}{\error(2\pi \bdw)^{d/2}}\right)^{3} + \log \frac{n}{C_{\bdw, d, \alpha}+d}}.
\end{align}
where $C >0$ is a universal constant and $C_{\bdw, d, \alpha}\coloneqq \pth{\frac{\alpha^2 \log n}{h}}^{d/2}$. The proof of Lemma~\ref{lem:bound-c1} proceeds by first bounding $\| \hat s^{\error}_{\bdw} - s^{\ast\error}_{\bdw} \|^2_{\rho^\ast_\bdw}$ in terms of the squared Hellinger distance between $\hat{\rho}_\bdw$ and $\rho^\ast_\bdw$ using the result of~\cite{SG2020}, extending \cite[Theorem 3]{JZ09} to $d$ dimensions. 
This crucially uses the regularization in the score estimates. Another crucial component of the proof involves establishing a bound for the expected Hellinger distance between Gaussian-smoothed empirical distribution to the population, which gives us the desired bound of $\frac{1}{n\bdw^{d/2}}$ and thereby achieving the optimal rate in Theorem~\ref{thm:l2-error-kde}. We formally present this result as Lemma~\ref{lem:hellinger-bound} in Appendix~\ref{sec:bound-hellinger}.

\paragraph{Second term:}
The second term $\| s^{\ast\error}_{\bdw} - s^\ast_{\bdw} \|^2_{\rho^\ast_\bdw}$ is the error induced by the regularization, which we can bound as follows: 
If $0 \leq \error \le (2\pi \bdw)^{-d/2}/e$ and 
$h \leq \alpha^2$, then
\begin{align}
\label{eq:upper-bound-3}
    \| s^{\ast\error}_{\bdw} - s^\ast_{\bdw} \|^2_{\rho^\ast_\bdw} \le 
    \frac{2\error}{h} (64 \alpha^2 \log n)^{d/2} 
    \log \frac{1}{\error (2\pi h)^{d/2}}      
    + \frac{2d^{3/2}}{hn^2}.
\end{align}
We state the result formally as Lemma~\ref{lem:bound-c2new} in Appendix~\ref{sec:pf-bound-c2}.

\paragraph{Third term:}
The third term $\| s^\ast_\bdw - s^\ast \|^2_{\rho^\ast_\bdw}$ is a bias term that we can bound using the Lipschitzness of the score function.
Concretely, by Lemma~\ref{lem:bound-c4} in Appendix~\ref{sec:pf-bound-c4}, we have: If $\bdw < 1/(4L)$,
\begin{align}
\label{eq:upper-bound-4}
\| s^\ast_\bdw - s^\ast \|^2_{\rho^\ast_\bdw} \le L^2\bdw d.
\end{align}

\paragraph{Combining the bounds.}
Combining the bounds~\eqref{eq:upper-bound-1}--\eqref{eq:upper-bound-4}, we have
\begin{align*}
    \E \| \hat s^{\error}_{\bdw} - s^\ast \|^2_{\rho^\ast_\bdw} &\le\frac{Cd\inparen{C_{\bdw, d, \alpha}+d}}{\bdw n}\left( \log \frac{1}{\error (2\pi \bdw)^{d/2}}\right)^{3} +  \frac{Cd\pth{C_{\bdw, d, \alpha}+d}}{\bdw n}\, \log \frac{n}{C_{\bdw, d, \alpha}+d}\\
    & \quad + \frac{6\error}{\bdw} (64 \alpha^2 \log n)^{d/2} 
\log \frac{1}{\error (2\pi \bdw)^{d/2}}+ \frac{6d^{3/2}}{hn^2} 
+ 3 L^2 \bdw d
\end{align*}
where $C >0$ is a universal constant. By choosing $\error =  n^{-2}$,
the first term dominates the second and third terms for $n = \Omega(e^{d})$. It follows that
\begin{align}
\label{eq:upper-bd1}
    \E \| \hat s^{\error}_{\bdw} - s^\ast \|^2_{\rho^\ast_\bdw} \le \frac{C d^4 (\alpha^2 \log n)^{d/2}}{n\bdw^{d/2+1}} \pth{\log \frac{n}{\bdw}}^3 + 3 L^2 \bdw d
\end{align}
where $C > 0$ is a different universal constant.
Finally, we optimize the bound~\eqref{eq:upper-bd1} over $\bdw$.
By choosing $\bdw = \inparen{\frac{d^3 (\alpha^2 \log n)^{d/2}}{L^2 n}}^{\frac{2}{d+4}}$, the two terms in~\eqref{eq:upper-bd1} are in the same order, and hence we obtain the desired bound by the change of measure in~\eqref{eq:change-of-measure-ub}:
$$\E \| \hat s^{\error}_{\bdw} - s^\ast \|^2_{\rho^\ast} \le C d \alpha^2 L^2 \pth{\log n}^{\frac{d}{d+4}} n^{-\frac{2}{d+4}}$$
for a universal constant $C > 0$.
\end{proof}

The following result generalizes~\prettyref{thm:l2-error-kde} to score function $s^*$ that is $(L,\beta)$-H\"older-continuous with $0<\beta \leq 1$, satisfying
$\|s^*(x_1) - s^*(x_2)\| \le L\|x_1 - x_2 \|^\beta$ for any $x_1, x_2 \in \R^d$.
The proof uses a more general argument based on the score bound of Gaussian mixture in \cite[Proposition 2]{polyanskiy2016wasserstein} in place of the more delicate argument (\prettyref{lem:bound-c3}) based on log-concavity that requires the Lipschitzness of the score.
For readability, we provide the proof of~\prettyref{thm:upper-bound-holder} separately in Appendix~\ref{sec:holder}.

\begin{theorem}
\label{thm:upper-bound-holder}
    Fix $d \ge 1$. Suppose we have $X_1,\cdots, X_n$ drawn i.i.d. from some $\rho^*$ which is a $\alpha$-subgaussian distribution on $\R^d$ with $(L, \beta)$-H\"older continuous score function for some $0 < \beta \le 1$. Setting
\begin{align*}
\error = n^{-2}
\qquad\text{ and }\qquad
\bdw = \inparen{\frac{d^{4-\beta} (\alpha^2 \log n)^{d/2} }{L^2 n}}^{\frac{2}{d+2\beta+2}}
\end{align*}
for sufficiently large $n$, the score estimator~\eqref{Eq:KernelEst} satisfies
\begin{align*}
    \E\loss(\hat s_\bdw^\error, \rho^\ast) \le 
    C  d^\beta L^2 \alpha^{2\beta} (\log n)^{\frac{d\beta}{d+2\beta+2}} n^{-\frac{2\beta}{d+2\beta+2}}
\end{align*}
for a universal constant $C >0$.
\end{theorem}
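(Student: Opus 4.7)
The plan is to replay the proof of \prettyref{thm:l2-error-kde} with two modifications. After the same change-of-measure step bounding $\mathbb{E}\|\hat s_h^\varepsilon - s^*\|_{\rho^*}^2$ by a constant times $\mathbb{E}\|\hat s_h^\varepsilon - s^*\|_{\rho_h^*}^2$, and the same three-term split
\begin{align*}
\mathbb{E}\|\hat s_h^\varepsilon - s^*\|_{\rho_h^*}^2 \le 3\,\mathbb{E}\|\hat s_h^\varepsilon - s_h^{*\varepsilon}\|_{\rho_h^*}^2 + 3\,\|s_h^{*\varepsilon} - s_h^*\|_{\rho_h^*}^2 + 3\,\|s_h^* - s^*\|_{\rho_h^*}^2,
\end{align*}
only the smoothing bias and one auxiliary score-magnitude estimate inside the first two terms are sensitive to $\beta$. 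The Hellinger bound \prettyref{lem:hellinger-bound} and a Hölder analogue of the density ratio bound \prettyref{lmm:ratio} (obtained by replacing $L\|y\|^2/2$ by $L\|y\|^{1+\beta}/(1+\beta)$ in the Taylor remainder, yielding a factor like $\exp(O(dL h^{(1+\beta)/2}))$ that is still $O(1)$ in the relevant regime) carry over essentially unchanged.

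\paragraph{Smoothing bias via conditional expectation.} Here $\beta$ enters directly. Integration by parts in $\nabla \rho_h^*(y) = \int s^*(x)\rho^*(x)\,(2\pi h)^{-d/2}e^{-\|y-x\|^2/(2h)}\,dx$ gives the Tweedie-type identity
\begin{align*}
s_h^*(y) = \mathbb{E}[\, s^*(X) \mid Y = y\,], \qquad Y = X + \sqrt{h}\,Z, \qquad Z \sim \mathcal{N}(0, I_d).
\end{align*}
Two applications of Jensen's inequality combined with $(L,\beta)$-Hölder continuity then yield
\begin{align*}
\|s_h^* - s^*\|_{\rho_h^*}^2 \le \mathbb{E}\|s^*(X) - s^*(Y)\|^2 \le L^2\,\mathbb{E}\|\sqrt{h}\,Z\|^{2\beta} \le L^2 h^\beta d^\beta,
\end{align*}
using concavity of $t \mapsto t^\beta$ on $[0,\infty)$ for $\beta \le 1$ and $\mathbb{E}\|Z\|^2 = d$. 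This replaces the $L^2 h d$ bound of \prettyref{eq:upper-bound-4} and is the sole source of the new exponent in the rate.

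\paragraph{First and second terms without Lipschitzness.} The proofs of \prettyref{lem:bound-c1} and \prettyref{lem:bound-c2new} use score Lipschitzness only through the log-concavity-based estimate on $\|\nabla \log \rho_h^*(y)\|$ encoded in \prettyref{lem:bound-c3}. I plan to substitute this by the Gaussian-mixture score bound in \cite[Proposition 2]{polyanskiy2016wasserstein}, which for any $\alpha$-subgaussian mixing prior gives an estimate of the form $\|s_h^*(y)\| \lesssim (\|y\| + \alpha\sqrt{\log(1/\rho_h^*(y))})/h$. Integrating this against $\rho_h^*$ using $\alpha$-subgaussian tails (rather than log-concavity) reproduces the same $1/(n h^{1+d/2})$ scaling in \prettyref{eq:upper-bound-1} and the same $\varepsilon/h$ scaling in \prettyref{eq:upper-bound-3}, at the cost of mildly inflated polylogarithmic factors. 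Setting $\varepsilon = n^{-2}$ as before and combining with the Hölder bias gives, up to $\polylog(n)$,
\begin{align*}
\mathbb{E}\|\hat s_h^\varepsilon - s^*\|_{\rho_h^*}^2 \;\lesssim\; \frac{d^{4-\beta}(\alpha^2 \log n)^{d/2}}{n\,h^{1+d/2}} + L^2 d^\beta h^\beta.
\end{align*}

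\paragraph{Optimization and main obstacle.} Balancing the two terms in $h$ gives $h \asymp \bigl(d^{4-\beta}(\alpha^2 \log n)^{d/2}/(L^2 n)\bigr)^{2/(d+2\beta+2)}$, matching the bandwidth in the theorem and producing the claimed rate $L^2 d^\beta \alpha^{2\beta} n^{-2\beta/(d+2\beta+2)}$ up to logarithmic factors. The main technical obstacle is the substitution for \prettyref{lem:bound-c3}: the log-concavity argument produced sharp Gaussian tails for the integrands, whereas the Polyanskiy--Wu bound introduces a $\log(1/\rho_h^*(y))$ term that must itself be integrated against $\rho_h^*$. Verifying that these integrals remain controlled by only $\polylog(n)$, and that the additional powers of $d$ entering from the generalized score bound do not swamp the $d^\beta$ gain in the bias, is where the careful bookkeeping concentrates.
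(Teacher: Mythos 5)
Your overall skeleton matches the paper's: same change of measure, same three-term decomposition, same Tweedie-type identity $s_h^*(y) = \E[s^*(X)\mid Y=y]$ giving the H\"older bias $L^2(hd)^\beta$, same bandwidth choice. You also correctly identify \cite[Proposition 2]{polyanskiy2016wasserstein} as the tool that should replace the Lipschitz-dependent machinery. But the crucial step — the part you yourself flag as ``the main technical obstacle'' — is not worked out correctly, and the way you describe deploying the Polyanskiy--Wu bound does not address the quantity that actually needs control.

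Two factual issues first. You claim that both \prettyref{lem:bound-c1} and \prettyref{lem:bound-c2new} use Lipschitzness ``through the log-concavity-based estimate on $\|\nabla\log\rho_h^*(y)\|$ encoded in \prettyref{lem:bound-c3}.'' Neither half of this is right. \prettyref{lem:bound-c2new} uses only subgaussianity (its score-size bound comes from Jensen's inequality, not \prettyref{lem:bound-c3}); the paper explicitly carries it over unchanged. And \prettyref{lem:bound-c3} is not an estimate on the magnitude of the smoothed score; it is a propagation-of-log-smoothness result (via Brascamp--Lieb) showing that $\rho^*_{h/2}$ still has a $2L$-Lipschitz score. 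That result is invoked inside the proof of \prettyref{lem:hellinger-bound} so that \prettyref{lmm:ratio} can be applied to $p=\rho^*_{h/2}$, in order to control the ratio $\int_B \rho^*_{h/2}/\rho^*_h$. It is \emph{that density ratio between two smoothed densities} — not $\|s_h^*\|$ integrated against $\rho_h^*$ — that the H\"older generalization must handle.

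Here is the genuine gap: there is no easy analogue of \prettyref{lem:bound-c3} for $\beta$-H\"older scores, so you cannot argue that $s_{h/2}^*$ remains H\"older with a controlled constant and then feed it into your modified \prettyref{lmm:ratio}. The paper sidesteps this by using Polyanskiy--Wu in a different way than you describe: \prettyref{lmm:ic} bounds the score \emph{difference} $\|s_h(y)-s(y)\|\lesssim L(\|y-\mu\|+A)^\beta$ (uniformly over $h$), and \prettyref{lmm:ratio-holder} uses this to bound $\log\frac{p_a}{p_{a+t}}(y)$ by terms like $L\sqrt{td}\,\|y-\mu\|^\beta$ — a bound that depends on $y$, which is acceptable because it is only used on the box $B$ where $\|y-\mu\|\lesssim\sqrt{\log n}$ and with $h=1/\mathrm{poly}(n)$, so the exponential of this quantity stays $O(1)$. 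Your modified \prettyref{lmm:ratio} (replacing the $L\|\cdot\|^2/2$ remainder by $L\|\cdot\|^{1+\beta}$) only recovers \prettyref{eq:ratio-holder1}, i.e., a bound on $\log\frac{p}{p_h}$; it does not handle $\log\frac{p_{h/2}}{p_h}$, which is what the Hellinger calculation in \prettyref{eq:H2} actually requires. Your proposal to instead bound $\|s_h^*(y)\|$ (a bound that, as you have written it with a $\sqrt{\log(1/\rho_h^*(y))}$ term, also conflates the Jensen-based estimate \prettyref{eq:scorebound} with Polyanskiy--Wu) and integrate it against $\rho_h^*$ does not produce the needed ratio bound, so the first term would remain uncontrolled under your outline.

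In short: the bias term and the change-of-measure step of your proposal are fine and match the paper; you correctly notice \prettyref{lem:bound-c2new} does not depend on $\beta$ in the end result, but misidentify why; and the first term — the only genuinely new part — is missing the key idea (bound the score \emph{difference}, not the score magnitude, via Polyanskiy--Wu; use it to bound $\log(p_a/p_{a+t})$ on the high-probability box).
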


\subsection{Minimax lower bound}

The following minimax lower bound, matching the upper bound in
Theorem \ref{thm:l2-error-kde} up to logarithmic factors, 
is proved in Appendix~\ref{Sec:ProofLB}.
The same argument is expected to extend straightforwardly to yield a matching lower bound for \prettyref{thm:upper-bound-holder} for 
the case of $\beta$-H\"older continuous scores.

\begin{theorem}
\label{thm:lb}
For any $d \geq 1$ and $\alpha>0$, there exist constants $c=c(d,\alpha)$ and
$L=L(d,\alpha)$ such that
    \begin{align}
        \inf_{\hat{s}} \sup_{\rho^\ast \in \mathcal{P}_{\alpha,L}}\E \loss(s, \rho^\ast) \ge c(d,\alpha) \, n^{-\frac{2}{d+4}}.
				\label{eq:scorelb}
    \end{align}
\end{theorem}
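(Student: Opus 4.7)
I would follow the classical Fano/Tsybakov method for nonparametric minimax lower bounds. The plan is to construct a finite family $\{\rho_\omega\}_{\omega\in\Omega}\subset\calP_{\alpha,L}$ whose scores are well-separated under the $L^2(\rho_\omega)$-metric while the product laws $\rho_\omega^{\otimes n}$ are hard to distinguish. Concretely, if I can exhibit $N$ densities $\rho^{(1)},\dots,\rho^{(N)}\in\calP_{\alpha,L}$ with pairwise separation $\|s^{(i)}-s^{(j)}\|_{\rho^{(i)}}^2\ge 4\psi$ and pairwise $\KL(\rho^{(i)}\,\|\,\rho^{(j)})\le \frac{1}{3n}\log N$, then Fano's inequality combined with the standard reduction from estimation to multiple hypothesis testing gives a minimax lower bound of order $\psi$. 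The goal is to design the family so that $\psi\asymp n^{-2/(d+4)}$, matching the upper bound in \prettyref{thm:l2-error-kde}.

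\textbf{Perturbation construction.} Take $\rho_0=\calN(0,\sigma^2 I_d)$, with $\sigma=\sigma(d,\alpha)$ and $L=L(d,\alpha)$ chosen so that $\rho_0\in\calP_{\alpha/2,L/2}$. Fix a smooth compactly supported bump $\phi\colon\R^d\to\R$ of class $C^2$ with $\mathrm{supp}(\phi)\subset[-1/2,1/2]^d$ and $\int\phi=0$. For a bandwidth $h>0$, place an $h$-spaced grid of $M\asymp h^{-d}$ points $\{x_i\}_{i=1}^M$ inside the unit ball, and set
\begin{align*}
\phi_i(x)=h^{2}\,\phi\bigl((x-x_i)/h\bigr),
\end{align*}
so that the $\phi_i$'s have pairwise disjoint supports. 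For $\omega\in\{0,1\}^M$ define
\begin{align*}
\rho_\omega(x)=\rho_0(x)+\delta\sum_{i=1}^M \omega_i\,\phi_i(x),
\end{align*}
where $\delta=\delta(d,\alpha)>0$ is a small absolute constant. The zero-mean property $\int\phi=0$ makes $\int\rho_\omega=1$; for $h$ small, $\rho_\omega>0$ and $\rho_\omega\asymp\rho_0$ uniformly on the unit ball. Subgaussianity carries over from $\rho_0$ since the perturbation is bounded and compactly supported. For the score-Lipschitz condition, a direct computation gives $\nabla^2\log\rho_\omega=\nabla^2\log\rho_0+O(\delta)$ uniformly in $x$ and $\omega$, since $\nabla^2\phi_i(x)=(\nabla^2\phi)((x-x_i)/h)$ is $O(1)$ (the $h^2$ prefactor exactly cancels the two derivatives); taking $\delta$ small then keeps $\rho_\omega\in\calP_{\alpha,L}$.

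\textbf{Key estimates and balancing.} Using the disjoint supports and $\rho_\omega\asymp\rho_0$, a first-order expansion of $\log(1+\delta\phi_i/\rho_0)$ gives
\begin{align*}
\|s_\omega-s_{\omega'}\|_{\rho_\omega}^2 \,\asymp\, \delta^2\sum_{i=1}^M(\omega_i-\omega_i')^2\int\frac{|\nabla\phi_i(x)|^2}{\rho_0(x)}\,dx \,\asymp\, \delta^2 h^{d+2}\,|\omega\triangle\omega'|,
\end{align*}
while the $\chi^2$-bound on KL yields
\begin{align*}
\KL(\rho_\omega\,\|\,\rho_{\omega'}) \,\le\, \int\frac{(\rho_\omega-\rho_{\omega'})^2}{\rho_{\omega'}}\,dx \,\lesssim\, \delta^2 h^{d+4}\,|\omega\triangle\omega'|.
\end{align*}
Gilbert--Varshamov supplies a subfamily of size $N\ge 2^{cM}$ with pairwise Hamming distance $\ge M/4$. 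On this packing the separation is $\psi\asymp\delta^2 h^2$ and the maximum KL is $\lesssim\delta^2 h^4$, so Fano's condition $n\cdot\max\KL\le c\log N\asymp h^{-d}$ reduces to $n\delta^2 h^{d+4}\lesssim 1$; balancing at $h\asymp n^{-1/(d+4)}$ delivers the lower bound $\psi\asymp n^{-2/(d+4)}$.

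\textbf{Main obstacle.} The technical heart of the argument is the simultaneous verification that every $\rho_\omega$ lies in $\calP_{\alpha,L}$; in particular, $\nabla^2\log\rho_\omega$ must remain bounded in operator norm by $L$ uniformly in $\omega$. This is precisely what forces the $h^2$ prefactor in $\phi_i$, which then couples $h$ into both the separation and the KL, producing the exponent $2/(d+4)$ --- characteristic of estimating the gradient of a twice-differentiable function --- rather than the Lipschitz-vector-field rate $2/(d+2)$ one might naively expect. Enforcing $\int\phi=0$ avoids an explicit renormalization of $\rho_\omega$ (which would otherwise contaminate the clean score expansion), while the uniform density-ratio comparison $\rho_\omega\asymp\rho_0$ is what lets me replace $L^2(\rho_\omega)$ by $L^2(\rho_0)$ in both directions during the testing reduction. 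Once these points are handled, the remaining positivity, subgaussianity, and Fano computations are routine.
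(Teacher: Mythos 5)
Your proposal is correct and follows essentially the same route as the paper: a Fano-type argument with a Gaussian base density perturbed on a bounded region by $\asymp h^{-d}$ disjointly-supported, mean-zero $C^2$ bumps scaled as $h^2\phi((x-x_i)/h)$ (the $h^2$ prefactor being exactly what keeps the score Lipschitz), separation $\asymp h^{d+2}$ per mismatched cell, $\chi^2$-bounded KL $\asymp h^{d+4}$ per cell, Gilbert--Varshamov packing, and balancing at $h\asymp n^{-1/(d+4)}$. The cosmetic differences (unit ball versus unit cube, an explicit amplitude $\delta$ versus absorbing it into the choice of $\epsilon$, stating separation directly in $\|\cdot\|_{\rho_\omega}$ versus first passing to unweighted $L^2(D)$) do not change the substance, and you correctly flag the two nontrivial checks — uniform Lipschitzness of $\nabla\log\rho_\omega$ and the two-sided density-ratio comparison — that the paper also treats as the heart of the construction.
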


The above convergence rate can be interpreted by drawing analogy with classic results on estimating smooth densities in the H\"older class. It is well-known \cite{stone1982optimal,
stone1983optimal} that, for $m$-smooth densities supported on a  $d$-dimensional hypercube, the optimal rate (in mean squared $L^2$-error) of estimating the $r$th derivative is 
\begin{equation}
n^{-\frac{2(m-r)}{2m+d}}.
    \label{eq:stone}
\end{equation}
It is conceivable that estimating the score function (derivative of log density) is at least as hard as estimating the derivative of the density itself. Since the Lipschitz assumption of the score translates to twice differentiability of the density, 
we see that \prettyref{thm:lb} corresponds to \eqref{eq:stone}  with $m=2$ and $r=1$.

Furthermore, optimal estimation of $m$-smooth densities in squared error is attained by KDE with kernel chosen depending on the smoothness parameter $m$ and the optimal bandwidth $n^{-\frac{1}{d+2m}}$ 
\cite{stone1983optimal}. For $m=2$ this curiously coincides with the bandwidth choice $\sqrt{h}$ of the Gaussian kernel in Theorem \ref{thm:l2-error-kde}; for $m=\beta+1$, this coincides with Theorem \ref{thm:upper-bound-holder}.
By classical results in density estimation \cite{Tsybakov09}, for densities with smoothness parameter $m \leq 2$, positive kernels are optimal; however, for higher smoothness $m > 2$, kernels with negative parts must be used. For this reason, the methodology in the current paper based on Gaussian kernel may cease to be optimal if the score function is smoother than Lipschitz.
Determining the optimal rate of estimating $\beta$-H\"older scores with $\beta>1$ remains an open question.

The optimal rate $n^{-\frac{2}{d+4}}$ exhibits the typical ``curse of dimensionality'', suggesting that the optimal sample complexity reaching a given accuracy of score estimation must grow exponentially with the dimension. Rigorously establishing this is an interesting question that requires proving a sharper non-asymptotic lower bound for score estimation that applies to $d$ growing with $n$. We note that for estimating the density itself this was successfully carried out in \cite{mcdonald2017minimax}.

\section{Application to SGM}
\label{sec:application-sgm}

In this section, we discuss some implications of our results from Section~\ref{Sec:UpperBound} in the context of SGMs. 
We derive a finite-sample error bound for a KDE score estimator along the forward process, and then we plug in our result to existing guarantees for SGMs to deduce a final sample complexity result.
We first provide a brief review of a specific type of SGM called {\em Denoising Diffusion Probabilistic Modeling (DDPM)}; we refer the reader to~\cite{HJA20} for more details. 

Suppose our target distribution is $\nu$ on $\R^d$. In DDPM, we start with an Ornstein-Uhlenbeck (OU) process targeting $\gamma = \N(0, I_d)$:
\begin{align}
\label{eq:ou}
dX_t = -X_t \, dt + \sqrt{2} dW_t, \qquad X_0 \sim \nu_0 = \nu,
\end{align}
where $W_t$ is the standard Brownian motion in $\R^d$. Let $\nu_t = \text{Law}(X_t)$ be the distribution of $X_t$ along the OU flow, and let $s_t = \nabla \log \nu_t$ be the score function of $\nu_t$ \footnote{We note a change in the notation of $s_t$. Previously, $s_t$ denotes the score function of the Gaussian-smoothed target distribution with $t$ being the smoothing variance. In this section, $s_t$ is the score function of $\nu_t$ along the OU process starting from target distribution $\nu$. }.
We run the OU process~\eqref{eq:ou} until time $T > 0$, and then we simulate the backward (time-reversed) process, which can be described by the following stochastic differential equation:
\begin{align}
\label{eq:backwardOU}
  d\tilde{Y}_{t} = ( \tilde{Y}_{t} + 2 s_{T-t}(\tilde{Y}_{t})) dt + \sqrt{2} dW_t, \qquad \tilde{Y}_0 \sim \mu_0 = \nu_T.
\end{align}
By construction, if we start the backward process of~\eqref{eq:backwardOU} from $\tilde{Y}_0 \sim \mu_0 = \nu_T$, then we will have $\tilde{Y}_{t} \sim \mu_t = \nu_{T-t}$ for $0 \le t \le T$, and thus $\tilde{Y}_{T} \sim \mu_T = \nu$ is a sample from the target distribution.
However, in practice, we do not know
the score functions $\inparen{s_t}_{0 < t \le T}$.
Typically, we only assume we have independent samples from $\nu$, which we use to construct score estimators $\inparen{\hat{s}_t}_{0 < t \le T}$.
Then in the algorithm, we start the backward process~\eqref{eq:backwardOU} from $\tilde Y_0 \sim \gamma$ (the target distribution of forward process~\eqref{eq:ou} which is close to $\nu_T$ for large $T$), and we simulate the backward process in discrete time with score estimators $\inparen{\hat{s}_t}_{0 < t \le T}$ that we learn from samples. 
Let $\step > 0$ be the step size, and $K = \frac{T}{\step}$ so $T = K \step$; we assume $K \in \mathbb{N}$.
In each step, the DDPM algorithm performs the following update:
\begin{align}\label{eq:sgm}
\textrm{\textbf{(DDPM)}} \qquad\qquad\qquad y_{k+1} = e^{\step} y_k + 2(e^{\step}-1) \hat s_{\step (K-k)}(y_k) + \sqrt{e^{2\step}-1} z_k \qquad\qquad
\end{align}
where $\hat s_{\step k}$ is an approximation to the score function of $\nu_{\step k}$, and $z_k \sim \N(0,I_d)$ is an independent Gaussian random variable. 

We will apply the convergence result from~\cite{CCL+22}, who showed that the DDPM algorithm~\eqref{eq:sgm} returns a sample that is close in $\TV$ distance to $\nu$ up to the score estimation error and discretization error under some assumptions, including an error bound on the score estimation error.
We note that in \cite{CCL+22} the score estimator is learned from (and hence depends on) the sample, and their result is stated for a fixed score estimator. An inspection of their proof shows that the guarantees hold in expectation (with respect to the random sample), which we state below as Proposition~\ref{prop:convergence-sgm-ccl23}.
\begin{proposition}
\label{prop:convergence-sgm-ccl23}
Assume: 
\begin{enumerate}
    \item For all $t \ge 0$, the score $s_t = \nabla \log \nu_t$ is $L$-Lipschitz for some $1 \le L < \infty$;
    \item The target distribution $\nu$ has a finite second moment $m_2^2 \coloneqq \E_\nu [\|X\|^2] < \infty$;
    \item For $k = 1, \cdots, K$, the score estimator $\hat{s}_{k\step}(\cdot) = \hat{s}_{k\step}(\cdot;\X_n)$, which depends on the sample $\X_n$, has expected score estimation error $\E_{\X_n}\| s_{k\step} - \hat{s}_{k\step}\|_{\nu_{k\step}}^2 \le \epsilon_{\textnormal{score}}^2$; and
    \item The step size $\step = T/K$ satisfies $\step \lesssim 1/L$.
\end{enumerate}
At each time $t = k\step$, let $\rho_t$ be the law of the iterate $y_k$ of the DDPM algorithm~\eqref{eq:sgm} conditioned on the sample $\X_n$. 
Then it holds that:
   \begin{align}
    \label{eq:convergence-sgm-tv}
    \E_{\X_n}[\TV(\rho_T, \nu)] \lesssim e^{-T}\sqrt{\KL(\nu \| \gamma)} + \inparen{ L\sqrt{d\step} + Lm_2\step + \epsilon_{\textnormal{score}}}\sqrt{T}.
   \end{align}
\end{proposition}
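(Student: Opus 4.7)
Proposition~\ref{prop:convergence-sgm-ccl23} is essentially a restatement of the main convergence theorem of \cite{CCL+22} with the distinction that our score estimators $\hat s_{k\eta}(\,\cdot\,;\X_n)$ are random functions of the training sample, whereas \cite{CCL+22} state their bound for a fixed (deterministic) sequence of estimators. The plan is therefore to (i) invoke \cite{CCL+22}'s bound conditionally on $\X_n$, and (ii) integrate the result against the law of $\X_n$ using Jensen's inequality.

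First, condition on $\X_n$. Given the sample, the estimators $\hat s_{k\eta}(\,\cdot\,;\X_n)$ are deterministic measurable functions, and the law $\rho_T$ of the final DDPM iterate depends only on the independent Gaussian noise $\{z_k\}$ used in \eqref{eq:sgm} and the choice of initialization $\tilde Y_0 \sim \gamma$. The core Girsanov-plus-Pinsker argument of \cite{CCL+22} then yields a per-sample bound of the form
\begin{align*}
\TV(\rho_T, \nu)
\lesssim e^{-T}\sqrt{\KL(\nu \| \gamma)} + \pth{L\sqrt{d\eta} + L m_2 \eta}\sqrt{T} + \sqrt{\eta \sum_{k=1}^{K} \|s_{k\eta} - \hat s_{k\eta}(\,\cdot\,;\X_n)\|_{\nu_{k\eta}}^2},
\end{align*}
valid for every realization of $\X_n$. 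The first three (non-score) terms are deterministic and require no further manipulation.

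Second, take expectation with respect to $\X_n$. Since the square root is concave, Jensen's inequality gives
\begin{align*}
\E_{\X_n}\sqrt{\eta \sum_{k=1}^{K} \|s_{k\eta} - \hat s_{k\eta}(\,\cdot\,;\X_n)\|_{\nu_{k\eta}}^2}
\,\le\, \sqrt{\eta \sum_{k=1}^{K} \E_{\X_n}\|s_{k\eta} - \hat s_{k\eta}(\,\cdot\,;\X_n)\|_{\nu_{k\eta}}^2}
\,\le\, \sqrt{K\eta\,\epsilon_{\text{score}}^2} = \epsilon_{\text{score}}\sqrt{T},
\end{align*}
where the last step uses assumption 3 together with $K\eta = T$. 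Combining with the deterministic terms yields the stated bound \eqref{eq:convergence-sgm-tv}.

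The main obstacle is verifying that the argument of \cite{CCL+22} truly applies pointwise in $\X_n$. This reduces to the observation that, conditional on the sample, the reverse SDE simulated in \eqref{eq:sgm} is driven by fresh Brownian increments that are independent of $\X_n$, so all the pathwise Girsanov estimates go through verbatim with $\hat s_{k\eta}$ regarded as a deterministic vector field. Once this is granted, the passage from a per-realization bound to the in-expectation bound is essentially Cauchy--Schwarz over the discrete time steps, and no further structural properties of the estimator are required.
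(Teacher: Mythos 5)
Your proposal correctly identifies the paper's intent: condition on $\X_n$, invoke the deterministic-estimator bound of \cite{CCL+22} pointwise (with $\hat s_{k\eta}(\,\cdot\,;\X_n)$ treated as a fixed vector field, driven by fresh Brownian increments independent of the sample), and then pass to expectation via Jensen's inequality applied to the concave square root of the summed score errors. The paper states this only as a remark that ``an inspection of their proof'' shows the bound holds in expectation, so your argument supplies exactly the details that inspection entails; the one minor slip is the closing sentence attributing the final expectation step to Cauchy--Schwarz rather than Jensen, which you had in fact already applied correctly.
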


\subsection{Score estimation along the OU flow} 
Given i.i.d.\ observations $X^{(1)}, \cdots, X^{(n)}$ from $\nu$, we need to estimate the score functions along the OU process, i.e.\ $s_t = \nabla \log \nu_t$ for any $t > 0$. Recall that following the OU flow~\eqref{eq:ou}, $X_t \sim \nu_t = \text{Law}(e^{- t} X_0) * \N(0, \tau(t) I_d)$
where $\tau(t) = 1-e^{-2t}$. When we start the OU flow with a finite set of observations $\{X^{(i)}\}_{i=1}^n$, i.e.\ the initial distribution of the flow is the empirical distribution $\hat\nu_0 = \frac{1}{n} \sum_{i=1}^n \delta_{X^{(i)}}$, the perturbed distribution at time $t$ is a Gaussian mixture $$\hat{\nu}_t = \frac{1}{n} \sum_{i=1}^n \N(e^{-t} X^{(i)}, \tau(t) I_d)$$
Its score function, $\nabla \log \hat{\nu}_t$, can be easily expressed in a closed-form. Using this closed-form score function for the sampling allows for a sampler without training. This may seem appealing, but using this score function in the backward SDE~\eqref{eq:backwardOU} will convert the noise to the empirical distribution $\hat{\nu}_0$, which means the model will memorize the training set and cannot generate novel samples. Therefore, we propose to use the regularized score function of $\hat{\nu}_t$:
\begin{align}
\label{eq:score-estimator-along-ou}
\hat{s}_t^{\error} = \frac{\nabla \hat\nu_t}{\max(\hat\nu_t, \error)}
\end{align}
for some $\error > 0$. The induced error in the closed-form score will enable the model to generalize.
Furthermore, we can analyze its performance by appealing to  similar techniques used in the proof of Theorem~\ref{thm:l2-error-kde}. We state the result as follows and provide the proof in Appendix~\ref{sec:pf-sgm}.

\begin{theorem}
\label{thm:score-estimator-conv-for-ou}
Fix $d \ge 1$. Assume $\nu$ is $\alpha$-subgaussian and has an $L$-Lipschitz score. Choose $\error = n^{-2}$. If the step size of DDPM~\eqref{eq:sgm} satisfies
$$\frac{1}{2} \log\inparen{1+\frac{\alpha^2 \log n}{n^{2/d}} } \lesssim \step \le \frac{1}{2}\log \inparen{1 + \frac{1}{4L-1}},$$
then at time $t \ge \step$, the squared $L^2(\nu_t)$ error of the estimator~\eqref{eq:score-estimator-along-ou} satisfies
\begin{align}
\label{eq:OU-score-error}
    \E \loss(\hat{s}_t^{\error}, \nu_t) = \E \| \hat{s}_t^{\error}  - s_t\|^2_{\nu_t}
    \lesssim \frac{1}{n}\frac{d}{(1-e^{-2t})} \inparen{\inparen{\frac{\alpha^2 \log n}{e^{2\step}-1}}^{d/2} + d}\inparen{\log \frac{n}{(2\pi (1-e^{-2t}))^{d/4}}}^{3}
\end{align}
where $\lesssim$ hides absolute constant, and the expectation is taken over the i.i.d.\ sample $X^{(1)}, \cdots, X^{(n)}$ from $\nu$.
\end{theorem}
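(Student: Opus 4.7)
The strategy is to parallel the proof of \prettyref{thm:l2-error-kde}, with a key simplification: because the target $s_t = \nabla\log\nu_t$ is the score of the already-smoothed distribution $\nu_t$, no smoothing bias term needs to be controlled, and only two terms arise in the error decomposition. The plan is to write $\nu_t = \mu_t * \N(0,\tau(t) I_d)$ with $\mu_t := \law(e^{-t} X_0)$ for $X_0 \sim \nu$ and $\tau(t) := 1-e^{-2t}$, so that $\hat\nu_t = \hat\mu_t * \N(0,\tau(t) I_d)$ where $\hat\mu_t = \frac{1}{n}\sum_{i=1}^n \delta_{Y^{(i)}}$ is the empirical distribution of the rescaled samples $Y^{(i)} := e^{-t} X^{(i)} \stackrel{\text{i.i.d.}}{\sim} \mu_t$. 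A direct change-of-variables computation shows that $\mu_t$ is $(e^{-t}\alpha)$-subgaussian with $(Le^{2t})$-Lipschitz score, placing us precisely in the setup underlying \prettyref{thm:l2-error-kde} applied to the prior $\mu_t$ with bandwidth $h = \tau(t)$, subgaussian parameter $\tilde\alpha = e^{-t}\alpha$, and Lipschitz constant $\tilde L = Le^{2t}$.

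The next step is the decomposition
\begin{align*}
    \E\|\hat s_t^{\error} - s_t\|_{\nu_t}^2 \,\le\, 2\,\E\|\hat s_t^{\error} - s_t^{\error}\|_{\nu_t}^2 \,+\, 2\,\|s_t^{\error} - s_t\|_{\nu_t}^2,
\end{align*}
where $s_t^{\error} := \nabla\nu_t / \max(\nu_t,\error)$ denotes the regularized population score. For the first term I would invoke \prettyref{lem:bound-c1} under the parameter substitution $(\bdw,\alpha,L) \mapsto (\tau(t),\,e^{-t}\alpha,\,Le^{2t})$; the constant $C_{\bdw,d,\alpha}$ then specializes to
\begin{align*}
    C_t \,:=\, \pth{\frac{(e^{-t}\alpha)^2 \log n}{\tau(t)}}^{d/2} \,=\, \pth{\frac{\alpha^2 \log n}{e^{2t}-1}}^{d/2},
\end{align*}
and monotonicity in $t$ yields $C_t \le \bigl(\alpha^2 \log n / (e^{2\step}-1)\bigr)^{d/2}$ for $t \ge \step$, recovering the quantity that appears in \prettyref{eq:OU-score-error}. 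For the second term I would invoke \prettyref{lem:bound-c2new} under the same substitution; setting $\error = n^{-2}$ reduces its contribution to $O(\tau(t)^{-1} n^{-2} \polylog(n))$, which is negligible compared with the first.

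The role of the conditions on $\step$ is to guarantee that the preconditions of \prettyref{lem:bound-c1} hold uniformly for $t \ge \step$: the lower bound $\step \gtrsim \tfrac{1}{2}\log(1+\alpha^2 n^{-2/d}\log n)$ gives $\tilde\alpha^2 n^{-2/d}\log n \lesssim \tau(t)$, equivalently $e^{2\step}-1 \gtrsim \alpha^2 n^{-2/d}\log n$; the upper bound $\step \le \tfrac{1}{2}\log(1+\tfrac{1}{4L-1})$ keeps $\tau(t)$ within the admissible range relative to the effective Lipschitz constant $\tilde L = Le^{2t}$, so that the underlying Hellinger estimate of \prettyref{lem:hellinger-bound} remains applicable. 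The main obstacle I anticipate is the careful bookkeeping of how the parameters $(\alpha, L)$ transform under the OU rescaling $X \mapsto e^{-t} X$ and the verification that the transformed hypotheses remain valid throughout the relevant range of $t$; the remaining calculation is essentially the one in \prettyref{thm:l2-error-kde}, short-circuited after the first two error terms because no further change of measure back to an unsmoothed target is required.
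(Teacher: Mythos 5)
Your decomposition and the idea of rescaling the OU flow to land in the setting of Lemmas~\ref{lem:bound-c1} and~\ref{lem:bound-c2new} are correct, but there is a genuine gap in how you handle the first term, and it is exactly the point the paper's argument is designed to get around. You propose to invoke \prettyref{lem:bound-c1} directly at time $t$ with the substitution $(\bdw,\alpha,L) \mapsto (\tau(t),\,e^{-t}\alpha,\,Le^{2t})$. For this to be legal you need the precondition $\bdw \le \tfrac{1}{4L}$ of \prettyref{lem:bound-c1} (inherited from \prettyref{lem:hellinger-bound}, which needs the Lipschitz prior to control $\rho^*/\rho^*_h$ in \prettyref{lmm:ratio}). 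Under your substitution this becomes $\tau(t) \le \tfrac{1}{4Le^{2t}}$, i.e.\ $e^{2t}-1 \le \tfrac{1}{4L}$, which is an upper bound on $t$, not on $\step$. The theorem must hold for all $t \ge \step$, and in the DDPM application $t$ ranges up to $T$, which is large; so this precondition fails for most of the relevant range of $t$. Your sentence that the hypothesis on $\step$ ``keeps $\tau(t)$ within the admissible range relative to $\tilde L = Le^{2t}$'' is therefore not correct: constraining $\step$ from above gives no control on $t$ from above, and the effective Lipschitz constant $Le^{2t}$ diverges.

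What the paper does differently, and this is the key missing idea, is to split \prettyref{lem:bound-c1} back into its two ingredients. It first applies the empirical-Bayes inequality \prettyref{thm:c1-1} at time $t$, which bounds $\E\|\hat s_t^\error - s_t^\error\|_{\nu_t}^2$ by $\tfrac{Cd}{\tau(t)}\,\polylog \cdot H^2(\hat\nu_t,\nu_t)$ and requires \emph{no} smoothness of the prior. Then it observes that $H^2(\hat\nu_t,\nu_t) = H^2\bigl(\hat\nu_0 * \N(0,(e^{2t}-1)I_d),\, \nu * \N(0,(e^{2t}-1)I_d)\bigr)$ is decreasing in $t$ by the data processing inequality, so $H^2(\hat\nu_t,\nu_t) \le H^2(\hat\nu_\step,\nu_\step)$. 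Only then does it invoke \prettyref{lem:hellinger-bound}, and only at the single time $t=\step$, where the rescaled Lipschitz constant $Le^{2\step}$ is small enough that $\tau(\step) \le \tfrac{1}{4Le^{2\step}}$ holds under the stated cap on $\step$. Your observation that $C_t$ is decreasing in $t$ is a consequence of the DPI step, but asserting that numerical monotonicity alone is not a substitute for justifying the application of the lemma at time $t$. If you incorporate the DPI reduction to time $\step$ before invoking the Hellinger bound, the rest of your argument (including the treatment of the second term via \prettyref{lem:bound-c2new}, which only needs subgaussianity and is fine at time $t$) goes through as you describe.
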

Note for small $t$ and large $n$, the right-hand side above is $\tilde{O}(\step^{-d/2} (tn)^{-1})$. The bound decreases in $t$; this is because $\nu_t$ converges to the standard Gaussian as $t\to\infty$. 
In fact, our method shows that to reach $ \E \loss(\hat{s}_t^{\error}, \nu_t) \le \epsilon_{\textnormal{score}}^2$ for \textit{all} $t\geq \eta$, it suffices to have $\tilde{O}\left( \frac{d \alpha^d}{\step^{d/2+1} \epsilon_{\textnormal{score}}^2}\right)$ samples.
This is not obvious because, despite that both $\hat\nu_t$ and $\nu_t$ move closer to the same Gaussian as $t$ increases, it is unclear whether the score estimation error $\| \hat{s}_t^{\error}  - s_t\|^2_{\nu_t}$ is monotonically decreasing in $t$.\footnote{Even in the absence of the regularization parameter $\epsilon$, in which case 
$\| \hat{s}_t - s_t\|^2_{\nu_t}$ coincides with the relative Fisher information $\FI(\hat\nu_t \,\|\, \nu_t)$ in \prettyref{eq:FI}, 
it is still unclear whether this is monotone in $t$ because 
$\FI$ is convex in the first argument but not in the second. In comparison, $H^2(\hat \nu_t, \nu_t) = H^2(\hat \nu * \calN(0,(e^{2t}-1)I_d), \nu * \calN(0,(e^{2t}-1)I_d))$ is decreasing in $t$.
}
Nevertheless,  empirical Bayes techniques  (recall \prettyref{eq:program1}) allow us to control 
$\ell(\hat \nu_t, \nu_t)$ in terms of the Hellinger distance 
$H^2(\hat \nu_t, \nu_t)$ which satisfies data processing inequality \cite[Theorem 7.4]{PW-it} and hence decreasing in $t$. 
Furthermore, a simple application of Markov inequality shows that on a high probability event (with respect to $H^2(\hat \nu_\eta, \nu_\eta)$), the preceding bound on 
$  \loss(\hat{s}_t^{\error}, \nu_t) $ holds simultaneously for all $t\geq \eta$.

Combining Theorem~\ref{thm:score-estimator-conv-for-ou} with the previous convergence result for DDPM (Proposition~\ref{prop:convergence-sgm-ccl23}), we obtain the following sample complexity guarantee for DDPM driven by  the regularized score estimator \prettyref{eq:score-estimator-along-ou}:
\begin{corollary}
    Suppose the assumptions in Proposition~\ref{prop:convergence-sgm-ccl23} and Theorem~\ref{thm:score-estimator-conv-for-ou} hold. In order to have $\E\TV(\rho_T, \nu) \le \epsilon_{\TV}$, it suffices to run DDPM~\eqref{eq:sgm} with $\hat s_t = \hat{s}_t^{\error}$ in
    \prettyref{eq:score-estimator-along-ou} 
    for $T \asymp \log (\KL(\nu \| \gamma)/\epsilon_{\TV})$ and $\step \asymp \frac{\epsilon_{\TV}^2}{L^2 d}$, and have 
    $$n = \tilde{O}\inparen{\frac{d^{d/2+2}\alpha^d L^{d+2}}{\epsilon_{\TV}^{d+4}} }$$
    samples for score estimation.
\end{corollary}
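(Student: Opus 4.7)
The plan is to combine \prettyref{prop:convergence-sgm-ccl23} with \prettyref{thm:score-estimator-conv-for-ou} and choose $T$, $\step$, and $n$ so that each of the four contributions on the right-hand side of \prettyref{eq:convergence-sgm-tv} is $O(\epsilon_{\TV})$. Concretely, I would require $e^{-T}\sqrt{\KL(\nu \| \gamma)} \lesssim \epsilon_{\TV}$, $L\sqrt{d\step T} \lesssim \epsilon_{\TV}$, $Lm_2\step\sqrt{T} \lesssim \epsilon_{\TV}$, and $\epsilon_{\textnormal{score}}\sqrt{T} \lesssim \epsilon_{\TV}$, and then solve the resulting system for the three free parameters $T$, $\step$, and $n$.

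The first constraint is handled immediately by taking $T \asymp \log(\KL(\nu \| \gamma)/\epsilon_{\TV})$, so that $T = \tilde O(1)$. Given this $T$, the dominant discretization term $L\sqrt{d \step T}$ forces $\step \asymp \epsilon_{\TV}^2/(L^2 d)$ up to logarithmic factors; one then checks that this $\step$ also absorbs the $Lm_2\step\sqrt{T}$ term (since $m_2$ is a constant depending on the prior only) and satisfies the hypothesis $\step \lesssim 1/L$ from \prettyref{prop:convergence-sgm-ccl23}. The remaining score-error budget becomes $\epsilon_{\textnormal{score}}^2 \lesssim \epsilon_{\TV}^2/T = \tilde O(\epsilon_{\TV}^2)$, which is the quantity to be controlled by the sample size.

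To close the argument I would invoke the uniform-in-$t$ version of \prettyref{thm:score-estimator-conv-for-ou} described immediately before the corollary, which guarantees that $\E \loss(\hat s_t^{\error}, \nu_t) \le \epsilon_{\textnormal{score}}^2$ simultaneously for all $t \ge \step$ provided $n = \tilde O( d\alpha^d / (\step^{d/2+1} \epsilon_{\textnormal{score}}^2) )$. Substituting the previously chosen $\step \asymp \epsilon_{\TV}^2/(L^2 d)$ and $\epsilon_{\textnormal{score}}^2 \asymp \epsilon_{\TV}^2$ gives
\[
n \;=\; \tilde O\!\left(\frac{d\alpha^d (L^2 d)^{d/2+1}}{\epsilon_{\TV}^{d+2}\cdot \epsilon_{\TV}^2}\right) \;=\; \tilde O\!\left(\frac{d^{d/2+2}\alpha^d L^{d+2}}{\epsilon_{\TV}^{d+4}}\right),
\]
which is the stated bound. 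One also verifies that the lower bound on $\step$ imposed by \prettyref{thm:score-estimator-conv-for-ou}, namely $\step \gtrsim \frac{1}{2}\log(1+\alpha^2\log n/n^{2/d})$, is automatic for the chosen $\step$ once $n$ is large enough.

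The main obstacle is the uniformity in $t$: \prettyref{thm:score-estimator-conv-for-ou} is an in-expectation statement for a single fixed $t$, while \prettyref{prop:convergence-sgm-ccl23} demands a score error bound at every discretization time $t = k\step$. A naive union bound over the $K = T/\step$ steps and a re-expectation across $t$ would be delicate because the score error $\|\hat s_t^\error - s_t\|_{\nu_t}^2$ is not obviously monotone in $t$. The resolution, already outlined in the paragraph preceding the corollary, is that the empirical-Bayes reduction \eqref{eq:program1} controls the score error by the single Hellinger quantity $H^2(\hat \nu_\step, \nu_\step)$, which is monotone in $t$ by the data processing inequality; a Markov-inequality argument on this one random variable yields a high-probability event on which the bound of \prettyref{thm:score-estimator-conv-for-ou} holds simultaneously for all $t \ge \step$, without inflating the sample complexity.
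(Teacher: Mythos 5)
Your proposal is correct and is essentially the argument the paper intends (the corollary is stated without an explicit proof, relying on the combination of Proposition~\ref{prop:convergence-sgm-ccl23} and the remark following Theorem~\ref{thm:score-estimator-conv-for-ou}). The parameter-balancing, the substitution of $\step$ into the sample complexity $\tilde O(d\alpha^d/(\step^{d/2+1}\epsilon_{\textnormal{score}}^2))$, and the final power-counting to arrive at $n = \tilde O(d^{d/2+2}\alpha^d L^{d+2}/\epsilon_{\TV}^{d+4})$ all check out.

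One small clarification on your closing paragraph: you frame the ``uniformity in $t$'' as the main obstacle and invoke the Markov-inequality/high-probability device, but this is stronger than what the corollary actually needs. Assumption~3 of Proposition~\ref{prop:convergence-sgm-ccl23} only asks for the \emph{in-expectation} score error $\E_{\X_n}\|\hat s_{k\step}^\error - s_{k\step}\|^2_{\nu_{k\step}} \le \epsilon_{\textnormal{score}}^2$ separately for each $k$, so no union bound or re-expectation is ever required. Theorem~\ref{thm:score-estimator-conv-for-ou} delivers exactly this for every $t\ge\step$, and the right-hand side of~\eqref{eq:OU-score-error} is already a decreasing function of $t$ (because the proof reduces everything to $\E H^2(\hat\nu_\step,\nu_\step)$ via the data processing inequality and the $1/(1-e^{-2t})$ prefactor decreases). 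Hence the worst case is $t=\step$, which is precisely where the sample complexity is read off; the Markov-inequality argument sketched in the paper is for the stronger, simultaneous high-probability guarantee and is not needed here.
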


\section{Discussion}
In this paper, we study the score estimation for subgaussian densities in $d$ dimensions with Lipschitz-continuous score functions. Under the score matching loss \prettyref{eq:LossFunction}, we establish a minimax lower bound at the rate of $n^{-\frac{2}{d+4}}$ using Fano's lemma. 
Applying techniques from empirical Bayes and smoothed empirical distribution as well as new insights enabled by the Lipschitzness of the true score, a regularized KDE score estimator using a Gaussian kernel with optimized bandwidth is shown to achieve this optimal rate up to logarithmic factors.
The convergence rate $n^{-\frac{2}{d+4}}$ suggests that an exponential increase in sample complexity is unavoidable as the dimension $d$ grows.

Within the SGM framework, particularly considering an OU flow as the forward process, if we start with $n$ independent observations, the score function along the OU flow has a closed-form (as the score of a Gaussian mixture). To improve generalization, we explicitly introduce a regularization term and analyze the performance of this estimator, leading to a sample complexity of $\tilde{O}\inparen{ \frac{d \alpha^d}{\step^{d/2 + 1}\epsilon_{\textnormal{score}}^2}}$ for score matching up to error $\epsilon_{\textnormal{score}}$, and a sample complexity of $ \tilde{O}\inparen{\frac{d^{d/2+2}\alpha^d L^{d+2}}{\epsilon_{\TV}^{d+4}} }$ for SGMs using this score estimator to reach a sampling error $\epsilon_{\TV}$. 
The exponential dependence of sample complexity on the dimension $d$ is fundamental to the nonparametric distribution class $\calP_{\alpha,L}$ we consider, which only assumes subgaussianity and score smoothness. There is a need to seek a meaningful distribution class whose sample complexity for score estimation has a milder dependency on $d$.

\section*{Acknowledgment}
Y.~Wu is grateful to Yandi Shen for helpful discussion on \prettyref{lem:bound-c2new} and 
to Sinho Chewi for discussion that motivated \prettyref{lmm:ic}.
 Y.~Wu is supported in part by the NSF Grant CCF-1900507 and an Alfred Sloan fellowship.

\appendix

\section{Details for proof of Theorem~\ref{thm:l2-error-kde}}

\subsection{Preliminary results}
\label{sec:preliminaries}

We present preliminary results which we will use.
We recall the following result from~\cite{SG2020}.

\begin{proposition}[{\cite[Theorem E.1]{SG2020}}]
\label{thm:sg2020}
Let $\rho_0$ and $\nu_0$ be two distributions on $\R^d$. Let $\rho_1 = \rho_0 * \N(0, I_d)$ and $\nu_1 = \nu_0 * \N(0, I_d)$. 
For $\error > 0$, let $s_{\rho_1}^{\error}$ and $s_{\nu_1}^{\error}$ denote the regularized score functions of $\rho_1$ and $\nu_1$ respectively.
If $\error \le (2\pi)^{-d/2}e^{-1/2}$, then
$$ \| s_{\rho_1}^{\error} - s_{\nu_1}^{\error} \|^2_{\rho_1} \le C d \max\left\{\left( \log \frac{(2\pi)^{-d/2}}{\error}\right)^{3}, \left|\log \H(\rho_1, \nu_1)\right|\right\} \H^2(\rho_1, \nu_1)$$
where $C$ is a universal positive constant.
\end{proposition}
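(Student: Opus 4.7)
The plan is to compare the two regularized scores pointwise using the algebraic identity
\begin{align*}
\nabla\log p - \nabla\log q = \frac{2\,\nabla(\sqrt{p}-\sqrt{q})}{\sqrt{p}} - \frac{\sqrt{p}-\sqrt{q}}{\sqrt{p}}\,\nabla\log q,
\end{align*}
writing $p=\rho_1$ and $q=\nu_1$ for brevity, and then to integrate against $\rho_1$. I would split $\R^d$ into a bulk region $B=\{x : \min(p(x),q(x)) \ge \error\}$ on which the regularization is inactive (so $s^\error_{\rho_1} = \nabla\log p$ and $s^\error_{\nu_1} = \nabla\log q$) and a complement $B^c$ on which the truncation at $\error$ takes over. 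On $B$, squaring the identity, multiplying by $p$, and integrating yields
\begin{align*}
\int_B \|\nabla\log p - \nabla\log q\|^2\, p\, dx \;\lesssim\; \int \|\nabla(\sqrt{p}-\sqrt{q})\|^2\, dx \;+\; \int (\sqrt{p}-\sqrt{q})^2\, \|\nabla\log q\|^2\, dx.
\end{align*}

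For the second integral I would use Tweedie's formula $\nabla\log q(x) = \E[Y-x \mid X=x]$ (with $Y\sim\nu_0$, $X = Y+Z$, $Z\sim\N(0,I_d)$) together with subgaussian concentration of the posterior to derive the pointwise bound $\|\nabla\log q(x)\|^2 \lesssim \|x\|^2 + \log(1/q(x))$; restricting further to the typical set $\|x\| \lesssim \sqrt{\log(1/\error)}$ and using $q(x)\ge\error$ on $B$ bounds this factor by $\log(1/\error)$, contributing a term of the form $H^2(\rho_1,\nu_1)\cdot\log(1/\error)$, while the atypical set is handled by subgaussian decay of $p$. For the first integral I would integrate by parts into $-\int(\sqrt{p}-\sqrt{q})\Delta(\sqrt{p}-\sqrt{q})\, dx$ and exploit that, for Gaussian convolutions, $\|\Delta\sqrt{p}\|_\infty$ and $\|\Delta\sqrt{q}\|_\infty$ on $B$ admit polylog-in-$1/\error$ control (via Tweedie applied to the Hessian of $\log p$), so that together with an $L^1$-estimate $\|\sqrt{p}-\sqrt{q}\|_{L^1} \lesssim H(\rho_1,\nu_1)\cdot(\log(1/H))^{d/4}$ obtained by splitting into a ball and its complement, the piece is of order $H^2(\rho_1,\nu_1)\cdot\polylog$. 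On $B^c$, the clipped scores $\nabla p/\error$ and $\nabla q/\error$ have only polylog magnitude by the same Tweedie-plus-subgaussianity argument, and $\rho_1(B^c)$ is exponentially small under the subgaussian assumption, so this piece is absorbed into the main term.

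The main obstacle will be extracting the sharp $(\log(1/\error))^3$ exponent rather than a weaker one: a direct execution of the decomposition above tends to yield a bound of the form $H(\rho_1,\nu_1)\cdot\polylog$ from the first integral, because the naive $L^1$-interpolation loses an extra factor of $H$. Recovering it requires a finer level-set decomposition of $p$ and $q$ and an Orlicz-norm analysis in the spirit of \cite{JZ09}, with the three powers of $\log(1/\error)$ arising respectively from the $\|x\|$-cutoff, the level-set threshold, and the subgaussian bound on $\|\nabla\log q\|^2$. Finally, one checks that in the small-Hellinger regime $H \le e^{-(\log(1/\error))^3}$ the alternative $|\log H(\rho_1,\nu_1)|$ term supersedes $(\log(1/\error))^3$, so that the maximum in the statement is sharp; this last step is a direct comparison of the two rates.
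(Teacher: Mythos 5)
The paper does not prove this proposition; it imports \cite[Theorem E.1]{SG2020} verbatim (the $d$-dimensional extension of \cite[Theorem 3]{JZ09}), so there is no in-paper argument to compare your sketch against. Taken on its own merits, your algebraic identity $\nabla\log p - \nabla\log q = \frac{2\nabla(\sqrt p - \sqrt q)}{\sqrt p} - \frac{\sqrt p - \sqrt q}{\sqrt p}\nabla\log q$, combined with a bulk/tail split at density level $\error$, is a reasonable opening move and is in the general spirit of the Jiang--Zhang argument. However, the sketch has a gap that is not merely a matter of filling in details.

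The proposition places \emph{no} tail hypothesis on $\rho_0$ and $\nu_0$ — they are completely arbitrary distributions. Your argument repeatedly invokes light tails of $\rho_1$: you use them to truncate to $\|x\|\lesssim\sqrt{\log(1/\error)}$ when controlling $\|\nabla\log q\|^2$, to assert that $\rho_1(B^c)$ is exponentially small, and implicitly to make $\|\sqrt{\rho_1}-\sqrt{\nu_1}\|_{L^1}$ finite and interpolatable against $\H(\rho_1,\nu_1)$. None of these steps survive heavy-tailed mixing distributions. The correct replacement is the unconditional Gaussian-mixture score bound $\|\nabla\log q(x)\|^2 \le 2\log\frac{(2\pi)^{-d/2}}{q(x)}$ (Tweedie plus Jensen, cf.~\prettyref{eq:scorebound}), which has no $\|x\|^2$ term and needs no spatial truncation; on the bulk set this already gives $\lesssim\log(1/\error)$. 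For the gradient term you correctly flag that the naive $L^1$--$L^2$ interpolation loses a factor, giving $\H\cdot\polylog$ rather than $\H^2\cdot\polylog$, and that an Orlicz/level-set argument in the spirit of \cite{JZ09} is needed to recover the square — but that argument is the entire technical content of the result and is not supplied. Your attribution of the three powers of $\log(1/\error)$ to ``$\|x\|$-cutoff, level-set threshold, posterior moment'' is also off: there is no $\|x\|$-cutoff in \cite{JZ09,SG2020}, and the cubic exponent emerges from the moment-generating-function/level-set mechanism itself. As written, the proposal is an incomplete outline whose intermediate steps would additionally need to be restructured to avoid tacitly assuming light tails.
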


Via a rescaling argument, we have the following generalization.

\begin{lemma}
\label{thm:c1-1}
Let $\rho$ and $\nu$ be two distributions on $\R^d$. Let $\bdw >0$, $\rho_\bdw = \rho * \N(0, \bdw I_d)$ and $\nu_\bdw = \nu * \N(0, \bdw I_d)$. For any $\error >0$, let $s_{\rho_\bdw}^{\error}$ and $s_{\nu_\bdw}^{\error}$ be the regularized score functions of $\rho_\bdw $ and $\nu_\bdw$ respectively.
If $\;0 < \error \le (2\pi \bdw)^{-d/2}e^{-1/2}$, then
$$ \| s_{\rho_\bdw}^{\error} - s_{\nu_\bdw}^{\error} \|^2_{\rho_\bdw} \le  \frac{Cd}{\bdw} \max\left\{\left( \log \frac{(2\pi \bdw)^{-d/2}}{\error}\right)^{3}, \left|\log \H(\rho_\bdw, \nu_\bdw)\right|\right\} \H^2(\rho_\bdw, \nu_\bdw),$$
where $C$ is a universal positive constant.
\end{lemma}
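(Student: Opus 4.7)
The plan is to deduce Lemma~\ref{thm:c1-1} from Proposition~\ref{thm:sg2020} via a scaling argument that reduces the bandwidth-$h$ case to the unit-variance case already treated in~\cite{SG2020}.

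Concretely, define $\tilde \rho$ and $\tilde \nu$ as the pushforwards of $\rho$ and $\nu$ under the map $x \mapsto x/\sqrt{h}$, and set $\tilde \rho_1 = \tilde \rho * \calN(0,I_d)$ and $\tilde \nu_1 = \tilde \nu * \calN(0,I_d)$. Then $\rho_\bdw$ is the pushforward of $\tilde \rho_1$ under $y\mapsto \sqrt h\, y$, so the densities are related by $\rho_\bdw(y) = h^{-d/2}\, \tilde \rho_1(y/\sqrt h)$, and similarly for $\nu$. Differentiating, $\nabla \rho_\bdw(y) = h^{-(d+1)/2}\, \nabla \tilde \rho_1(y/\sqrt h)$. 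If we introduce the rescaled regularization parameter $\tilde\error \coloneqq h^{d/2}\error$, the truncated denominator in the regularized score obeys
\[
\max\bigl(\rho_\bdw(y), \error\bigr) \;=\; h^{-d/2}\max\bigl(\tilde \rho_1(y/\sqrt h),\, \tilde\error\bigr),
\]
from which a short computation gives the key identity $s^{\error}_{\rho_\bdw}(y) = h^{-1/2}\, s^{\tilde\error}_{\tilde \rho_1}(y/\sqrt h)$, and analogously for $\nu$. Note that the hypothesis $\error \le (2\pi h)^{-d/2} e^{-1/2}$ is equivalent to $\tilde\error \le (2\pi)^{-d/2} e^{-1/2}$, so the scaled problem falls within the scope of Proposition~\ref{thm:sg2020}.

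Next I would transport the norm back to the unit-variance case by the change of variables $x = y/\sqrt h$, yielding the Jacobian $dy = h^{d/2}\,dx$ and hence
\[
\|s^{\error}_{\rho_\bdw} - s^{\error}_{\nu_\bdw}\|^2_{\rho_\bdw} \;=\; \frac{1}{h}\, \|s^{\tilde\error}_{\tilde \rho_1} - s^{\tilde\error}_{\tilde \nu_1}\|^2_{\tilde \rho_1}.
\]
Since the Hellinger distance is invariant under common bijective pushforwards, $H^2(\tilde \rho_1, \tilde \nu_1) = H^2(\rho_\bdw, \nu_\bdw)$, and furthermore $\log\bigl((2\pi)^{-d/2}/\tilde\error\bigr) = \log\bigl((2\pi h)^{-d/2}/\error\bigr)$. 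Applying Proposition~\ref{thm:sg2020} to $(\tilde \rho_1, \tilde \nu_1, \tilde\error)$ and multiplying by the scaling factor $1/h$ produces exactly the claimed inequality.

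There is no real obstacle here: the entire content is a careful bookkeeping of how the regularized score, the $L^2(\rho_\bdw)$ norm, the Hellinger distance, and the regularization parameter all transform under the dilation $x\mapsto x/\sqrt h$. The one spot that deserves a line of verification is the identity $s^\error_{\rho_\bdw}(y) = h^{-1/2} s^{\tilde\error}_{\tilde\rho_1}(y/\sqrt h)$, since the reduction fails unless the regularization threshold is rescaled by the Jacobian factor $h^{d/2}$; this is precisely what makes the admissibility conditions on $\error$ and $\tilde\error$ match.
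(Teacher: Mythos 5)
Your proof is correct and follows essentially the same rescaling argument as the paper: your $\tilde\rho_1$, $\tilde\nu_1$, and $\tilde\error$ are exactly the paper's $\rho'$, $\nu'$, and $\error'$, and both proofs reduce to Proposition~\ref{thm:sg2020} via the change of variables $y\mapsto y/\sqrt{h}$, the identity $s^\error_{\rho_\bdw}(y)=h^{-1/2}s^{\tilde\error}_{\tilde\rho_1}(y/\sqrt h)$, and scale-invariance of the Hellinger distance.
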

\begin{proof}
Let $X \sim \rho$ and $Y \sim \nu$, so $X_\bdw = X + \N(0, \bdw I) \sim \rho_\bdw$ and $Y_\bdw = Y + \N(0, \bdw I) \sim \nu_\bdw$. We can also write $X_\bdw$ and $Y_\bdw$ as
\begin{align*}
    &X_\bdw = \sqrt{\bdw} X' \qquad \text{where } ~ X'=\frac{X}{\sqrt{\bdw}} + \N (0, I_d) \sim \rho'  \\
    & Y_\bdw = \sqrt{\bdw} Y' \qquad \text{where } ~ Y'=\frac{Y}{\sqrt{\bdw}} + \N (0, I_d) \sim \nu'.
\end{align*}
Note that $\rho' = \law(X/\sqrt{\bdw})*\N(0, I_d)$ and $\nu' = \law(Y/\sqrt{\bdw})*\N(0, I_d)$. It follows from Proposition~\ref{thm:sg2020} that if $0 < \error' \le (2\pi)^{-d/2}e^{-1/2}$, then
$$\| s_{\rho'}^{\error'} - s_{\nu'}^{\error'} \|^2_{\rho'} \le C d \max\left\{\left( \log \frac{(2\pi)^{-d/2}}{\error'}\right)^{3}, \left|\log \H(\rho', \nu')\right|\right\} \H^2(\rho', \nu')$$
for some positive constant $C$.
By the relation of $\rho_\bdw$ and $\rho'$, we have
$\rho_{\bdw} (x) =  \bdw^{-d/2} \rho' (\frac{x}{\sqrt{\bdw}})$.
Thus we have the following relation of the score functions of $\rho_\bdw$ and $\rho'$, $$s_{\rho_\bdw}(x) = \frac{\nabla \rho_\bdw (x)}{\rho_\bdw (x)} = \frac{\bdw^{-d/2} \bdw^{-1/2} \nabla \rho'\left(\frac{x}{\sqrt{\bdw}}\right)}{\bdw^{-d/2} \rho' \left(\frac{x}{\sqrt{\bdw}}\right)} =  \frac{1}{\sqrt{\bdw}} s_{\rho'}\left(\frac{x}{\sqrt{\bdw}}\right)$$
and similarly for $\nu_\bdw$ and $\nu'$, $s_{\nu_\bdw}(y) = \bdw^{-1/2} s_{\nu'}\left(\frac{y}{\sqrt{\bdw}}\right)$.
The same holds for the regularized score functions, 
$$s_{\rho_\bdw}^{\error} (x) = \frac{1}{\sqrt{\bdw}}  s_{\rho'}^{\error'}\left(\frac{x}{\sqrt{\bdw}}\right) \qquad \textnormal{and }\qquad s_{\nu_\bdw}^{\error}(y) = \frac{1}{\sqrt{\bdw}} s_{\nu'}^{\error'}\left(\frac{y}{\sqrt{\bdw}}\right)$$
where $\error' = \bdw^{d/2}\error$. 
Therefore, if $0 < \error \le (2\pi \bdw)^{-d/2}e^{-1/2}$, i.e. $0 < \error' \le (2\pi)^{-d/2}e^{-1/2}$, then
\begin{align*}
    \| s_{\rho_\bdw}^{\error} - s_{\nu_\bdw}^{\error} \|^2_{\rho_\bdw} &= \int \rho_{\bdw}(\Tilde{x}) \;\| s_{\rho_\bdw}^{\error} (\Tilde{x}) - s_{\nu_\bdw}^{\error} (\Tilde{x})\|^2 d \Tilde{x} \\
    &= \frac{1}{\bdw} \int \rho' (x)\,\| s_{\rho'}^{\error'} (x)- s_{\nu'}^{\error'}(x) \|^2 dx \qquad\qquad (\text{by letting}\; \Tilde{x} = \sqrt{\bdw} x) \\
    &\stepa{\le} \frac{Cd}{\bdw}\, \max\left\{\left( \log \frac{(2\pi \bdw)^{-d/2}}{\error}\right)^{3}, \left|\log \H(\rho', \nu')\right|\right\} \H^2(\rho', \nu').
\end{align*}
where $(a)$ uses Proposition~\ref{thm:sg2020} and $\error' = \bdw^{d/2}\error$.
By the scale-invariance of the Hellinger distance,
\begin{align*}
    \H^2(\rho_\bdw, \nu_\bdw)= \H^2(\rho', \nu').
\end{align*}

Therefore, we obtain the desired result
$$\| s_{\rho_\bdw}^{\error} - s_{\nu_\bdw}^{\error} \|^2_{\rho_\bdw} \le \frac{Cd}{\bdw} \, \max\left\{\left( \log \frac{(2\pi \bdw)^{-d/2}}{\error}\right)^{3}, \left|\log \H(\rho_{\bdw}, \nu_{\bdw})\right|\right\} \H^2(\rho_{\bdw}, \nu_{\bdw}).$$
\end{proof}

\subsubsection{Hellinger convergence rate of smoothed empirical distribution}
\label{sec:bound-hellinger}
Another crucial ingredient of the proof is bounding the Hellinger distance between Gaussian-smoothed empirical distribution to the population.

\begin{lemma}
\label{lem:hellinger-bound}
    Let $d \ge 1$, $\bdw >0$ and $\alpha > 0$. Let $\rho^\ast \in \calP_{\alpha, L}$, which is an $\alpha$-subgaussian measure on $\R^d$ with an $L$-Lipschitz score $s^\ast$.
    Let $\rho^\ast_\bdw = \rho^\ast * \N(0, \bdw I_d)$. Let $\hat{\rho}$ be the empirical measure of an i.i.d.\ sample of size $n$ drawn from $\rho^\ast$ and $\hat{\rho}_\bdw = \hat{\rho} * \N(0, \bdw I_d)$. 
    Assume that $\bdw \leq \frac{1}{4L}$ and $h\leq \alpha^2$.
    Then
\begin{align}
    \E \H^2 (\hat \rho_\bdw,\rho^\ast_\bdw) \le 
     \frac{1}{n}\pth{\frac{C \alpha^2 \log n}{h}}^{d/2}
+ \frac{4d}{n},
\label{eq:smooth-hellinger}
\end{align}
where $C$ is some universal constant.
\end{lemma}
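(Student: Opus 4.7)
The plan is to decompose $H^2(\hat\rho_h,\rho_h^*)$ over a bulk region $G = [-R,R]^d$ with $R = 2\alpha\sqrt{\log n}$ (so $|G|=(16\alpha^2\log n)^{d/2}$) and its complement, bounding each piece by a different pointwise comparison between Hellinger and a polynomial in $\hat\rho_h,\rho_h^*$. Starting from
\[
H^2(\hat\rho_h,\rho_h^*) = \int_G \pth{\sqrt{\hat\rho_h(x)}-\sqrt{\rho_h^*(x)}}^2 dx + \int_{G^c}\pth{\sqrt{\hat\rho_h(x)}-\sqrt{\rho_h^*(x)}}^2 dx,
\]
I bound the bulk integrand via the chi-squared-type inequality $(\sqrt a-\sqrt b)^2 \le (a-b)^2/b$ and the tail integrand via the crude $(\sqrt a-\sqrt b)^2 \le a+b$.

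For the bulk part, taking expectation reduces the integrand to $\Var(\hat\rho_h(x))/\rho_h^*(x) \le \tfrac{1}{n}\E[\phi_h(x-X)^2]/\rho_h^*(x)$, where $\phi_h$ is the density of $\N(0,hI_d)$. The algebraic identity $\phi_h(u)^2 = (4\pi h)^{-d/2}\phi_{h/2}(u)$ gives $\E[\phi_h(x-X)^2] = (4\pi h)^{-d/2}\rho_{h/2}^*(x)$, so
\[
\E\int_G \frac{(\hat\rho_h-\rho_h^*)^2}{\rho_h^*}\, dx \;\le\; \frac{(4\pi h)^{-d/2}}{n}\int_G\frac{\rho_{h/2}^*(x)}{\rho_h^*(x)}\, dx.
\]
The key pointwise bound $\rho_{h/2}^*(x)\le 2^{d/2}\rho_h^*(x)$ follows from $\phi_{h/2}(u) = 2^{d/2}e^{-\|u\|^2/(2h)}\phi_h(u)\le 2^{d/2}\phi_h(u)$. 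Combining with $|G| \le (16\alpha^2\log n)^{d/2}$ produces the first term of~\eqref{eq:smooth-hellinger}.

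For the tail part, $(\sqrt a-\sqrt b)^2\le a+b$ together with $\E[\hat\rho_h(G^c)]=\rho_h^*(G^c)$ gives $\E\int_{G^c}(\sqrt{\hat\rho_h}-\sqrt{\rho_h^*})^2\,dx\le 2\rho_h^*(G^c)$. Since $\rho^*$ is $\alpha$-subgaussian (WLOG mean zero) and $h\le\alpha^2$, each marginal $Y_j$ of $Y\sim\rho_h^*$ is $\sqrt 2\,\alpha$-subgaussian, and a union bound gives $\rho_h^*(G^c) \le 2d\exp(-R^2/(4\alpha^2)) = 2d/n$, producing the $4d/n$ term.

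The main obstacle is calibrating the truncation radius so that both halves of the decomposition scale optimally. A global chi-squared comparison diverges because $1/\rho_h^*$ is not integrable, while the crude bound alone only yields $O(1)$. With $R\asymp \alpha\sqrt{\log n}$, the bulk volume $(\alpha^2\log n)^{d/2}$ and the tail probability $d/n$ scale in tandem to give the claimed bound~\eqref{eq:smooth-hellinger}. The universal ratio estimate $\rho_{h/2}^*/\rho_h^*\le 2^{d/2}$ is what lets the chi-squared step absorb cleanly into the constant $C^{d/2}$; the Lipschitz-score hypothesis $h\le 1/(4L)$ does not appear to be essential to this particular argument, though it enters the broader proof of Theorem~\ref{thm:l2-error-kde} through Lemma~\ref{lmm:ratio}.
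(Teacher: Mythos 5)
Your proof is correct, and it is worth noting that while it follows the same overall strategy as the paper's --- the same bulk/tail decomposition of $\H^2$ via $(\sqrt a-\sqrt b)^2 \le (a-b)^2/b$ on the bulk and $(\sqrt a-\sqrt b)^2 \le a+b$ on the tail, the same variance identity $\varphi_h^2 = (4\pi h)^{-d/2}\varphi_{h/2}$, the same truncation box of side $\Theta(\alpha\sqrt{\log n})$, and the same subgaussian tail estimate --- it replaces the single most delicate step with something much simpler. To control $\int_B \rho^*_{h/2}/\rho^*_h$, the paper first invokes Lemma~\ref{lem:bound-c3} (Brascamp--Lieb and the Hessian-of-the-posterior calculation) to conclude that $\rho^*_{h/2}$ has a $2L$-Lipschitz score, and then applies Lemma~\ref{lmm:ratio} to $p=\rho^*_{h/2}$ with noise level $h/2$ to get the bound $\exp(dLh/4)$; both of these steps require $h \le 1/(4L)$. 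Your argument bounds the same ratio by the elementary pointwise inequality $\varphi_{h/2}(u) = 2^{d/2}e^{-\|u\|^2/(2h)}\varphi_h(u) \le 2^{d/2}\varphi_h(u)$, which propagates through the convolution to give $\rho^*_{h/2}(x) \le 2^{d/2}\rho^*_h(x)$ for every $x$, with no regularity assumption on $\rho^*$ at all.

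This is a genuine improvement, not just a stylistic one. The paper explicitly states that its proof of Lemma~\ref{lem:hellinger-bound} ``crucially relies on the Lipschitzness of the score'' and flags as open whether the lemma holds for subgaussian $\rho^*$ without a smooth score. Your observation resolves that: since $\rho^*_{h/2}$ and $\rho^*_h$ are convolutions of the \emph{same} $\rho^*$ with Gaussians of nested variances, their ratio is controlled by the ratio of the two Gaussian kernels alone, so the hypothesis $h \le 1/(4L)$ (and hence the whole machinery of Lemmas~\ref{lmm:ratio}, \ref{lem:bound-c3}, and \ref{Lem:Cov}) can be dropped from Lemma~\ref{lem:hellinger-bound}. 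The only assumptions that survive are $\alpha$-subgaussianity of $\rho^*$ and $h \le \alpha^2$. (The bound is even sharper: the constant $C$ becomes $8/\pi$ explicitly, and the ratio bound $2^{d/2}$ is exactly tight in the limit $h \to\infty$ against $\rho^* = \calN(\mu,\Sigma)$.) You are right that the Lipschitz hypothesis still enters the larger proof of Theorem~\ref{thm:l2-error-kde} through the change-of-measure step via Lemma~\ref{lmm:ratio}, which compares $\rho^*$ to $\rho^*_h$ rather than two smoothed versions of $\rho^*$; that comparison does not reduce to a ratio of Gaussian kernels, so the trick does not apply there.
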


We note that convergence rate of smoothed empirical distribution has been well-studied in the literature (see, e.g., \cite{GGJ+2020,block2022rate}) under various metrics including different types of $f$-divergences and transportation distances, some of which exhibit a rich spectrum of behavior depending on the relationship between the smoothing parameter and the subgaussian parameter of the population. For example, in the setting of \prettyref{lem:hellinger-bound}, \cite[Proposition 2]{GGJ+2020} shows that
\begin{align}
    \E \TV (\rho^\ast_\bdw, \hat \rho_\bdw) \le \left( \frac{1}{\sqrt{2}} + \frac{\alpha}{\sqrt{\bdw}}\right)^{d/2} e^{\frac{3d}{16}} \frac{1}{\sqrt{n}},
    \label{eq:smooth-tv}
\end{align}
which holds for any $\alpha$-subgaussian $\rho^\ast$ without smoothness conditions on the score. 
Using the inequality $H^2/2 \leq \TV\leq H$ \cite[Sec.~7.3]{PW-it}, \prettyref{eq:smooth-tv}  implies that
$\E \H^2 (\hat \rho_\bdw,\rho^\ast_\bdw) \lesssim \frac{1}{\sqrt{nh^{d/2}}}$ up to constant depending on $d$ and $\alpha$.
Since the inequality $H^2 \leq \TV$ cannot be improved in general,\footnote{Note that we do have the  special structure  that $\rho^\ast_\bdw$ are $\hat \rho_\bdw$ are both Gaussian mixtures. The recent work \cite{jia2023entropic} shows that 
for Gaussian mixtures $H^2$ and $\KL$ are comparable. Whether $H$ and $\TV$ are comparable is posed as an open problem in \cite{jia2023entropic}.} this falls short of the desired 
bound of $\frac{1}{nh^{d/2}}$ in \prettyref{eq:smooth-hellinger} and hence the optimal rate of score estimation in \prettyref{thm:l2-error-kde}.
Another option is to apply the smoothed KL bound in \cite[Theorem 3]{block2022rate} and the fact that $H^2 \leq \KL$, leading to
$\E \KL (\hat \rho_\bdw\|\rho^\ast_\bdw) \leq \frac{C (\log n)^d}{n}$; unfortunately, examining the proof of this result shows that the constant $C$ is exponential in $1/h$.
In fact, our proof of \prettyref{lem:hellinger-bound} (notably, \prettyref{lmm:ratio} below)
crucially relies on the Lipschitzness of the score 
and directly deals with the Hellinger distance using a truncated second moment calculation. It is unclear whether \prettyref{lem:hellinger-bound} holds for all subgaussian distributions without smooth scores.

To show \prettyref{lem:hellinger-bound}, we start with an intermediate result.
\begin{lemma}
    \label{lmm:ratio}
    Let $p$ be a density on $\R^d$ whose score $s=\nabla\log p$ is $L$-Lipschitz. Let $p_h = p*\varphi_h$ where $\varphi_h$ is the density of $\N(0, hI_d)$.
    Then for all $y\in\reals^d$,
    \[
\frac{p}{p_h}(y) \leq  \exp(dLh/2).
    \]
\end{lemma}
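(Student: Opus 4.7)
My plan is to work with the representation
\[
p_h(y) = \int p(y-z)\varphi_h(z)\,dz = p(y)\cdot\E_{Z\sim\N(0,hI_d)}\!\left[\frac{p(y-Z)}{p(y)}\right],
\]
so that proving $p(y)/p_h(y)\leq \exp(dLh/2)$ reduces to establishing a pointwise lower bound on the likelihood ratio $p(y-z)/p(y)$ and then evaluating a Gaussian integral.

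The key pointwise bound comes from the hypothesis that $s=\nabla\log p$ is $L$-Lipschitz, which is equivalent to $\log p$ having an $L$-Lipschitz gradient. The standard descent-lemma-style consequence is the quadratic lower bound
\[
\log p(y-z) \;\geq\; \log p(y) - \langle s(y), z\rangle - \frac{L}{2}\|z\|^2 \qquad \text{for all } y,z\in\R^d,
\]
and hence $p(y-z)/p(y) \geq \exp(-\langle s(y),z\rangle - L\|z\|^2/2)$.

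Plugging this into the representation above, I would compute the resulting Gaussian expectation by factoring over coordinates. For a single coordinate $Z_i\sim\N(0,h)$ with $a_i=s_i(y)$,
\[
\E\!\left[\exp\!\left(-a_i Z_i - \tfrac{L}{2}Z_i^2\right)\right] = \frac{1}{\sqrt{1+Lh}}\exp\!\left(\frac{h\,a_i^2}{2(1+Lh)}\right) \;\geq\; \frac{1}{\sqrt{1+Lh}},
\]
by completing the square in the integrand. Multiplying over $i=1,\dots,d$ yields
\[
\E_{Z\sim\N(0,hI_d)}\!\left[\exp\!\left(-\langle s(y),Z\rangle - \tfrac{L}{2}\|Z\|^2\right)\right] \;\geq\; (1+Lh)^{-d/2}.
\]

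Combining these gives $p_h(y)/p(y) \geq (1+Lh)^{-d/2}$, and applying the elementary inequality $1+Lh \leq e^{Lh}$ yields $(1+Lh)^{d/2}\leq \exp(dLh/2)$, which is the desired bound. I do not foresee a substantive obstacle here: the argument is essentially a one-line consequence of the Lipschitz-gradient quadratic bound combined with a standard Gaussian moment generating function computation, and notably the upper bound is independent of $y$ and of the score $s(y)$.
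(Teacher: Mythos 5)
Your proof is correct and takes a genuinely different route from the paper's. The paper applies Jensen's inequality to $-\log$ to pass the expectation outside the logarithm, writes the resulting difference of log-densities as a line integral of the score, and exploits the symmetry $\E[Z]=0$ to replace $s(y-uZ)$ by $s(y-uZ)-s(y)$ before invoking Cauchy--Schwarz and Lipschitzness; the Gaussian enters only through $\E\|Z\|^2 = d$. You instead invoke the descent-lemma quadratic lower bound $\log p(y-z) \geq \log p(y) - \langle s(y),z\rangle - \tfrac{L}{2}\|z\|^2$ directly in the integrand of the convolution and then compute the resulting Gaussian integral in closed form, dropping the nonnegative term $\exp(h\,s_i(y)^2/(2(1+Lh)))\geq 1$ to remove the $y$-dependence. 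Your approach produces the marginally sharper intermediate bound $p/p_h \leq (1+Lh)^{d/2}$, from which the stated bound follows via $1+Lh\leq e^{Lh}$. One point worth noting: the paper's Jensen-plus-symmetrization argument generalizes more readily, and indeed the paper reuses that exact template in Lemma~\ref{lmm:ratio-holder} for $\beta$-H\"older scores with $\beta<1$, where the quadratic descent lemma is unavailable; your MGF computation, relying on the specific Gaussian form of the quadratic penalty, would not carry over as cleanly to that setting.
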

\begin{proof}
    Let $Z\sim\calN(0,I_d)$.
    Then
    $p_h(y)=\Expect[p(y-\sqrt{h}Z)]$.
    We have
    \begin{align*}
\log \frac{p}{p_h}(y) 
= ~ & \log p(y) - \log \Expect[p(y-\sqrt{h}Z)] \\
\stepa{\leq} ~ &  \Expect[\log p(y) - \log p(y-\sqrt{h}Z)] \\
= ~ &  \Expect \int_{-\sqrt{h}}^0 
\Iprod{-Z}{s(y-uZ)} du \\
\stepb{=} ~ &  \Expect \int_{-\sqrt{h}}^0 
\Iprod{-Z}{s(y-uZ)-s(y)} du \\
\stepc{\leq} ~ &  \Expect \int_{-\sqrt{h}}^0 
L|u|\|Z\|^2 du = d Lh/2
\end{align*}
where (a) applies Jensen's inequality to the convex function $x \mapsto -\log x$;
(b) is because $\Expect[Z]=0$;
(c) applies Cauchy-Schwarz and 
the Lipschitzness of $s$.
\end{proof}

\begin{remark}
As we shall see below, the second moment calculation in bounding the expected square Hellinger distance requires controlling a quantity of the form $\int_{\reals^d} dy \frac{p}{p_h}(y)$ as $\poly(1/h)$.
The hope is that the convolution $p_h$ has a slightly heavier tail than that of $p$ such that the ratio $\frac{p}{p_h}(y)$ decays like a Gaussian of variance approximately $h$; this is exactly the case when $p$ is Gaussian.
While we cannot prove this in general,  \prettyref{lmm:ratio}  bounds the density ratio $\frac{p}{p_h}(y)$ by a constant independent of $y$. As such, additional truncation will need to be introduced  before passing to the second moment, which we do below. 

On the other hand, with more assumptions on the density $p$, it is possible to control $\int_{\reals^d} dy \, \frac{p}{p_h}(y)$ directly. For example, if $p$ is not only log-smooth but also strongly log-concave, then by analyzing the heat equation satisfied by $p_h$, one can show that $\int_{\reals^d} dy \, \frac{p}{p_h}(y) = O(h^{-d/2})$.     
\end{remark}

\begin{proof}[Proof of \prettyref{lem:hellinger-bound}]
Let $B\subset\reals^d$. 
Note that for any distributions $P$ and $Q$ with 
densities $p$ and $q$ on $\reals^d$,
\[
\H^2(p,q) = 
\int_{\reals^d} (\sqrt{p}-\sqrt{q})^2 
\leq
\int_B (\sqrt{p}-\sqrt{q})^2 
+ P(B^c)+Q(B^c)
\leq \int_B \frac{(p-q)^2}{q}
+ P(B^c)+Q(B^c).
\]
Thus, applying $\Expect\hat \rho_\bdw=\rho^\ast_\bdw$, we obtain
\begin{equation}
\E \H^2 (\hat \rho_\bdw,\rho^\ast_\bdw) 
\leq \int_B dy \frac{\Expect[(\rho^\ast_\bdw(y)- \hat \rho_\bdw(y))^2]}{\rho^\ast_\bdw(y)}
+ 2 \int_{B^c} \rho^\ast_\bdw.
    \label{eq:H1}
\end{equation}
Recall that $\varphi_h(x)=\frac{1}{(2\pi h)^{d/2}} e^{-\|x\|^2/(2h)}$ is the density of $\calN(0,h I_d)$. 
Let $X_i$ be i.i.d.~as $\rho^\ast$. 
Note that for each $y$,
\[
\hat \rho_\bdw(y) = \frac{1}{n} \sum_{i=1}^n
\varphi_h(y-X_i).
\]
Thus
$\Expect[\hat \rho_\bdw(y)]=\rho^\ast_\bdw(y)$ and 
$\Var[\hat \rho_\bdw(y)]=\frac{1}{n}
\Var(\varphi_h(y-X_1))$.
Note that 
$\varphi_h(x)^2 = (4\pi h)^{-d/2} \varphi_{h/2}(x)$.
So
\[
\Expect[\varphi_h(y-X_1))^2]
= (4\pi h)^{-d/2} 
\Expect[\varphi_{h/2}(y-X_1)]=
(4\pi h)^{-d/2} \rho_{h/2}^\ast(y)
\]
and
$\Var(\varphi_h(y-X_1))=
(4\pi h)^{-d/2} \rho_{h/2}^\ast(y) - (\rho_{h}^\ast(y))^2
$.

Combining the above with \prettyref{eq:H1}, we get
\begin{equation}
\E \H^2 (\hat \rho_\bdw,\rho^\ast_\bdw) 
\leq \frac{(4\pi h)^{-d/2}}{n}
\int_B dy \frac{\rho_{h/2}^\ast(y)}{\rho^\ast_\bdw(y)}
+ 2 \int_{B^c} \rho^\ast_\bdw.
    \label{eq:H2}
\end{equation}

    Next, choose $B = \mu+ [-a,a]^d$ where 
    $\mu$ is the mean of $\rho^*$,
    $a = \sqrt{C_0 \log n}$ for some $C_0$ to be specified.
Since 
$\rho^\ast$ is $\alpha$-subgaussian, 
$\rho_\bdw$ is 
$\sqrt{\alpha^2+h}$-subgaussian with the same mean  $\mu$.
Assuming $h\leq \alpha^2$, by union bound
\begin{equation}
\int_{B^c} \rho^\ast_\bdw
\leq 2d \exp\pth{- \frac{a^2}{4\alpha^2}}
\leq \frac{2d}{n}
    \label{eq:truncate}
\end{equation}
upon choosing $C_0=4\alpha^2$ and hence $a=2\alpha \sqrt{\log n}$.

For the first term in \prettyref{eq:H2}, 
since $h<\frac{1}{L}$,
 \prettyref{lem:bound-c3} in Appendix \ref{sec:pf-bound-c3} below implies that
the score of $\rho^\ast_{h/2}$ is $2L$-Lipschitz.
Applying \prettyref{lmm:ratio} 
to $p=\rho^\ast_{h/2}$ and $t=h/2$ yields
\begin{align}    
\label{eq:likelihood-ratio-h/2-h}
\int_B dy \frac{\rho_{h/2}^\ast(y)}{\rho^\ast_\bdw(y)}\leq
\text{vol}(B) \exp(dLh/4)
= (16\alpha^2 \log n)^{d/2}  \exp(dLh/4).
\end{align}
Combining everything we get
\[
\E \H^2 (\hat \rho_\bdw,\rho^\ast_\bdw) 
\leq \frac{1}{n}\pth{\frac{C \alpha^2 \log n}{h}}^{d/2}
+ \frac{4d}{n}.
\]   
\end{proof}

\subsection{Bounding the empirical Bayes regret}
\label{sec:pf-bound-c1}

\begin{lemma}\label{lem:bound-c1}
Assume $\rho^\ast$ is $\alpha$-subgaussian and has an L-Lipschitz score $s^\ast$. Let $\;0 < \error \le (2\pi \bdw)^{-d/2}e^{-1/2}$, and assume 
$$ \frac{\alpha^2\log n}{n^{2/d}} \lesssim \bdw \leq \frac{1}{4L}.$$
Then
$$\E\| \hat s^{\error}_{\bdw} - s^{\ast\error}_{\bdw} \|^2_{\rho^\ast_\bdw} \le \frac{Cd\inparen{C_{\bdw, d, \alpha}+d}}{n \bdw} \insquare{ \left( \log \frac{(2\pi \bdw)^{-d/2}}{\error}\right)^{3} + \, \log \frac{n}{C_{\bdw, d, \alpha}+d}}.$$
where $C >0$ is a universal constant and $C_{\bdw, d, \alpha} = \pth{\frac{\alpha^2 \log n}{\bdw}}^{d/2}$.
\end{lemma}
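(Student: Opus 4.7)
The plan is to apply \prettyref{thm:c1-1} with $\rho=\rho^{\ast}$ and $\nu=\hat{\rho}$: since $\error \le (2\pi\bdw)^{-d/2}e^{-1/2}$ is assumed, this yields the sample-dependent pointwise bound
\[
\|\hat s^{\error}_{\bdw} - s^{\ast\error}_{\bdw}\|_{\rho^{\ast}_{\bdw}}^{2}
\;\le\; \frac{Cd}{\bdw}\,\max\!\left\{\pth{\log \tfrac{(2\pi\bdw)^{-d/2}}{\error}}^{3},\; \big|\log \H(\hat\rho_{\bdw},\rho^{\ast}_{\bdw})\big|\right\}\cdot \H^{2}(\hat\rho_{\bdw},\rho^{\ast}_{\bdw}).
\]
Upon taking expectation over $\X_{n}$ and writing $A:=\big(\log((2\pi\bdw)^{-d/2}/\error)\big)^{3}$ and $\H:=\H(\hat\rho_{\bdw},\rho^{\ast}_{\bdw})$, the elementary inequality $\max(a,b)\le a+b$ for nonnegative $a,b$ reduces the task to controlling (i) $A\cdot \E[\H^{2}]$ and (ii) $\E[|\log \H|\,\H^{2}]$ separately.

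For (i), we invoke \prettyref{lem:hellinger-bound}: its hypotheses $\bdw\le 1/(4L)$ and $\bdw\le\alpha^{2}$ are both available here (the latter follows from $\alpha^{2}L\ge 1$ combined with $\bdw\le 1/(4L)$), and it yields $M:=\E[\H^{2}]\lesssim (C_{\bdw,d,\alpha}+d)/n$. The assumed lower bound $\alpha^{2}\log n/n^{2/d}\lesssim \bdw$ forces $M\lesssim 1$, so that the logarithm produced below has the right sign. For (ii), since $\H^{2}\le 2<e$, we have the pointwise inequality $|\log \H|\,\H^{2}=\tfrac{1}{2}|\log \H^{2}|\,\H^{2}\le \tfrac{1}{2}\H^{2}\log(e^{2}/\H^{2})$. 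The function $x\mapsto x\log(e^{2}/x)$ is concave on $(0,\infty)$ (its second derivative equals $-1/x$) and nonnegative on $[0,e^{2}]$, so Jensen's inequality applied to $\H^{2}$ gives
\[
\E[|\log \H|\,\H^{2}] \;\le\; \tfrac{1}{2}M\log(e^{2}/M) \;\lesssim\; \frac{C_{\bdw,d,\alpha}+d}{n}\,\log\frac{n}{C_{\bdw,d,\alpha}+d}.
\]

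Combining (i) and (ii) with the prefactor $Cd/\bdw$ from \prettyref{thm:c1-1} produces precisely the stated bound. The main subtlety of this plan is the Jensen step in part (ii): because $\H^{2}$ ranges over $[0,2]$ rather than $[0,1]$, applying Jensen directly to $x\mapsto x\log(1/x)$ risks flipping signs near $\H=1$, and the intermediate augmentation to $x\log(e^{2}/x)$ is what cleanly preserves both nonnegativity and concavity across the full relevant range. The more substantive technical work is externalized to \prettyref{lem:hellinger-bound}, which, via \prettyref{lmm:ratio} and a truncated second-moment computation, crucially exploits the Lipschitzness of the true score to improve the generic TV-based Hellinger rate $\lesssim 1/\sqrt{n\bdw^{d/2}}$ to the sharper rate $\lesssim (C_{\bdw,d,\alpha}+d)/n$ that ultimately drives the optimal exponent in \prettyref{thm:l2-error-kde}.
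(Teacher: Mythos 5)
Your proposal is correct and follows essentially the same route as the paper's proof: invoke Lemma~\ref{thm:c1-1}, split via $\max(a,b)\le a+b$, control $\E[\H^2]$ by Lemma~\ref{lem:hellinger-bound}, and handle $\E[|\log\H|\,\H^2]$ by Jensen applied to a concave function of $\H^2$. The only cosmetic difference is your choice of $x\mapsto x\log(e^2/x)$ where the paper uses $x\mapsto x\log(4/x)$ (via $|\log\H|\le\log(2/\H)$); both devices serve to keep the log nonnegative on $[0,2]$ and to keep the Jensen step valid where the plain $x\log(1/x)$ would not. You also make the hypothesis check $\bdw\le\alpha^2$ (needed for Lemma~\ref{lem:hellinger-bound}) explicit via the standing assumption $\alpha^2 L\ge 1$ together with $\bdw\le 1/(4L)$, which the paper's proof leaves implicit.
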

\begin{proof}
First, by Lemma~\ref{thm:c1-1}, we relate the quantity $\| \hat s^{\error}_{\bdw} - s^{\ast\error}_{\bdw} \|^2_{\rho^\ast_\bdw}$ in terms of the squared Hellinger distance between $\hat{\rho}_\bdw$ and $\rho^\ast_\bdw$ conditional on the samples:
$$\| \hat s^{\error}_{\bdw} - s^{\ast\error}_{\bdw} \|^2_{\rho^\ast_\bdw} \le \frac{Cd}{\bdw} \max\left\{\left( \log \frac{(2\pi \bdw)^{-d/2}}{\error}\right)^{3}, |\log \H(\rho^\ast_\bdw, \hat{\rho}_\bdw)|\right\} \H^2(\rho^\ast_\bdw, \hat{\rho}_\bdw)$$
where $C>0$ is a universal constant. Note that $|\log H| \leq \log \frac{2}{H}$ since $0 \leq H \leq \sqrt{2}$.
Taking expectation over $\X_n$, and using the simple bound $\max(a, b) \le a+b$ for $a,b\geq 0$, we have:
\begin{multline}
\label{eq:calcb12}
    \E \| \hat s^{\error}_{\bdw} - s^{\ast\error}_{\bdw} \|^2_{\rho^\ast_\bdw} 
    \le \frac{Cd}{\bdw} \left( \left( \log \frac{(2\pi \bdw)^{-d/2}}{\error}\right)^{3}\E \H^2(\rho^\ast_\bdw, \hat{\rho}_\bdw) + \frac{1}{2} \E \left[\H^2(\rho^\ast_\bdw, \hat{\rho}_\bdw) \,\log \frac{4}{\H^2(\rho^\ast_\bdw, \hat{\rho}_\bdw)}  \right]\right).
\end{multline}
By Lemma~\ref{lem:hellinger-bound}, we have that: If $\bdw \leq \frac{1}{4L}$ 
\begin{align}
\label{eq:ubhelinger}
    \E \H^2 (\rho^\ast_\bdw, \hat \rho_\bdw) \le \frac{1}{n}\pth{\frac{C \alpha^2 \log n}{h}}^{d/2}
+ \frac{4d}{n}.
\end{align}

On the other hand, since $x \mapsto  x \log \frac{4}{x}$ is concave, by Jensen's inequality
\begin{align*}
    \frac{1}{2} \E \left[ \H^2(\rho^\ast_\bdw, \hat{\rho}_\bdw)  \log \frac{4}{\H^2(\rho^\ast_\bdw, \hat{\rho}_\bdw)} \right]
    \le \frac{1}{2} \E \H^2(\rho^\ast_\bdw, \hat{\rho}_\bdw) \log \frac{4}{\E\H^2(\rho^\ast_\bdw, \hat{\rho}_\bdw)}.
\end{align*}
Recall that $x \mapsto  x \log \frac{4}{x}$ is increasing in $(0, 4/e)$. Let $$C_{\bdw, d, \alpha} \triangleq \pth{\frac{\alpha^2 \log n}{h}}^{d/2}.$$ If $\frac{C_{\bdw, d, \alpha}}{n} \le 4/e$, which can be satisfied when $\bdw \gtrsim \alpha^2 n^{-2/d}\log n$, then
\begin{align}
\label{eq:2nd}
    \frac{1}{2} \E \H^2(\rho^\ast_\bdw, \hat{\rho}_\bdw) \log \frac{4}{\E \H^2(\rho^\ast_\bdw, \hat{\rho}_\bdw)} \le  \frac{C_{\bdw, d, \alpha} + d}{2n}\, \log \frac{n}{C_{\bdw, d, \alpha}+d}.
\end{align}
Therefore, combining~\eqref{eq:calcb12},~\eqref{eq:ubhelinger} and~\eqref{eq:2nd}, we obtain the desired result
$$\E\| \hat s^{\error}_{\bdw} - s^{\ast\error}_{\bdw} \|^2_{\rho^\ast_\bdw}  \le \frac{Cd\inparen{C_{\bdw, d, \alpha}+d}}{n \bdw} \insquare{ \left( \log \frac{(2\pi \bdw)^{-d/2}}{\error}\right)^{3} + \, \log \frac{n}{C_{\bdw, d, \alpha}+d}}.$$
\end{proof}

\subsection{Bounding the regularization error}
\label{sec:pf-bound-c2}

The following result bounds the error introduced by the regularization parameter $\error$. 
Similar results appeared before in \cite[Theorem 3]{JZ09} for 1 dimension
and 
\cite[Lemma 4.3]{SG2020} for $d$ dimensions, the latter of which is not convenient to apply. 
Instead, we provide a self-contained improved version following the simple approach in \cite[Sec.~5.2]{shen2022empirical}.
\begin{lemma}
\label{lem:bound-c2new}
Let  $\rho^\ast$ be $\alpha$-subgaussian. 
Assume that $0 \leq \error \le (2\pi \bdw)^{-d/2}/e$ and 
$h \leq \alpha^2$. Then
$$ \| s^{\ast\error}_{\bdw} - s^\ast_{\bdw} \|^2_{\rho^\ast_\bdw} \le 
\frac{2\error}{h} (64 \alpha^2 \log n)^{d/2} 
\log \frac{1}{\error (2\pi h)^{d/2}}      + \frac{2d^{3/2}}{hn^2}.
$$
\end{lemma}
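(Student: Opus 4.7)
The plan is to reduce the bound to a calculation on the low-density set $A \coloneqq \{\rho^\ast_\bdw < \error\}$, then invoke a Tweedie/Jensen pointwise bound on the score of a Gaussian mixture. Since $s^{\ast\error}_\bdw = s^\ast_\bdw$ wherever $\rho^\ast_\bdw \ge \error$, a direct calculation shows that on $A$,
\begin{align*}
\|s^{\ast\error}_\bdw(x) - s^\ast_\bdw(x)\|^2 \,=\, \|s^\ast_\bdw(x)\|^2 \cdot \frac{(\error - \rho^\ast_\bdw(x))^2}{\error^2} \,\le\, \|s^\ast_\bdw(x)\|^2,
\end{align*}
so it suffices to bound $\int_A \rho^\ast_\bdw(x) \|s^\ast_\bdw(x)\|^2\, dx$.

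For the pointwise score bound I use Tweedie's formula $s^\ast_\bdw(x) = (\E[Y \mid X = x] - x)/\bdw$ with $Y \sim \rho^\ast$ and $X = Y + \sqrt{\bdw}\,Z$ for $Z \sim \N(0,I_d)$, together with conditional Jensen to get $\|s^\ast_\bdw(x)\|^2 \le \bdw^{-2}\,\E[\|Y-x\|^2 \mid X=x]$. Using the identity $\|y-x\|^2 = 2\bdw \log \frac{M}{\varphi_\bdw(x-y)}$ with $M \coloneqq (2\pi \bdw)^{-d/2}$ and applying Jensen to the concave $\log$ under the posterior of $Y \mid X=x$ (the conditional expectation of $M/\varphi_\bdw(x-Y)$ collapses to $M/\rho^\ast_\bdw(x)$ because the $\varphi_\bdw$ factors cancel in the posterior), one obtains the clean estimate
\begin{align*}
\|s^\ast_\bdw(x)\|^2 \,\le\, \frac{2}{\bdw}\log\frac{M}{\rho^\ast_\bdw(x)}.
\end{align*}
The function $u \mapsto u\log(M/u)$ is increasing on $(0, M/e]$; since $\error \le M/e$ by assumption, for any $x \in A$ we have $\rho^\ast_\bdw(x)\log(M/\rho^\ast_\bdw(x)) \le \error\log(M/\error)$.

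To close the argument, split $A = (A \cap T) \cup (A \cap T^c)$ with the typical box $T \coloneqq \mu^\ast + [-R, R]^d$ and $R = 4\alpha\sqrt{\log n}$, where $\mu^\ast$ is the mean of $\rho^\ast$. The volume bound $|T| = (64\alpha^2 \log n)^{d/2}$ combined with the previous step produces the first term $\frac{2\error}{\bdw}(64\alpha^2\log n)^{d/2}\log\frac{1}{\error (2\pi\bdw)^{d/2}}$. On $T^c$, I return to the Tweedie upper bound and compute
\begin{align*}
\int_{T^c} \rho^\ast_\bdw(x) \|s^\ast_\bdw(x)\|^2\,dx \,\le\, \bdw^{-2}\,\E[\|Y-X\|^2 \mathbf{1}\{X \in T^c\}] \,=\, \bdw^{-1}\,\E[\|Z\|^2 \mathbf{1}\{X \in T^c\}],
\end{align*}
since $Y - X = -\sqrt{\bdw}\,Z$. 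Cauchy--Schwarz with $\E\|Z\|^4 \le 3d^2$ and the subgaussian tail $\P[X \in T^c] \le 2d/n^4$ (using that $\rho^\ast_\bdw$ is $\sqrt{\alpha^2+\bdw}$-subgaussian and $\bdw \le \alpha^2$, so $R^2/(2(\alpha^2+\bdw)) \ge 4\log n$) deliver the second term $\frac{2d^{3/2}}{\bdw n^2}$. The main subtle step is the Tweedie--Jensen estimate on $\|s^\ast_\bdw\|^2$ above: a cruder bound involving $1/\rho^\ast_\bdw$ would blow up on $A$ and destroy the $\error\log(1/\error)$ scaling that keeps the first term small.
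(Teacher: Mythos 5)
Your proposal is correct and follows essentially the same route as the paper: bound $\|s^\ast_\bdw(x)\|^2 \le \frac{2}{\bdw}\log\frac{1}{(2\pi\bdw)^{d/2}\rho^\ast_\bdw(x)}$ via Tweedie's formula plus Jensen (the paper packages the same inequality as $\E[W]\le\log\E[e^W]$ rather than via the identity $\|y-x\|^2 = 2\bdw\log(M/\varphi_\bdw(x-y))$, but it is the same application of concavity of $\log$), then split into a typical subgaussian box where the monotonicity of $u\mapsto u\log(M/u)$ yields the $\error\log(1/\error)$ term, and the tail where Cauchy--Schwarz on the truncated $\|Z\|^2$ gives the $d^{3/2}/(hn^2)$ term. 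Your choice $R = 4\alpha\sqrt{\log n}$ is in fact the one consistent with the stated constant $(64\alpha^2\log n)^{d/2}$ and the tail bound $2d/n^4$; the paper's written choice $a=\sqrt{64\alpha^2\log n}=8\alpha\sqrt{\log n}$ is a minor inconsistency (it gives $(2a)^d = (256\alpha^2\log n)^{d/2}$).
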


\begin{proof}
We first control the size of the score $s_h^\ast$. 
Let $U \sim \rho_*$ and $X=U + \sqrt{h}Z \sim \rho^\ast_h$, where $Z\sim \calN(0,I_d)$. Recall 
Tweedie's formula \prettyref{eq:tweedie0}, namely
    \begin{equation}
    s_h^\ast(x) = 
    \nabla\log \rho^\ast_h(x)= \frac{1}{h}  (\Expect[U|X=x]-x).
        \label{eq:tweedie}
    \end{equation}
Following \cite{JZ09,SG2020}, applying Jensen's inequality yields
\begin{align}
\|s^\ast_{\bdw}(x)\|^2 
 & \leq \frac{1}{h^2} 
\Expect[\|X-U\|^2 | X=x] \nonumber \\
& \leq \frac{2}{h}  \log  \Expect[\exp(\|X-U\|^2/(2h)) | X=x]
=  \frac{2}{h} \log \frac{1}{(2\pi h)^{d/2} \rho_h^\ast(x)}
\label{eq:scorebound}    
\end{align}
where the last equality is because the conditional density of $U$ given $X=x$ is 
$\frac{\exp\left(-\|x-u \|^2 / (2h) \right)\rho^*(u)}{(2\pi h)^{d/2} \rho_h^*(x)}.$

Next, as in the proof \prettyref{lem:hellinger-bound}, 
set $B=\mu+[-a,a]^d$, where $\mu=\Expect[U]=\Expect[X]$ and 
$a=\sqrt{64 \alpha^2 \log n}$. By the same subgaussian tail bound as in \prettyref{eq:truncate}, we have 
\begin{equation}
\prob{X \notin B} \leq \frac{2d}{n^4}.
    \label{eq:truncation2}
\end{equation}

Now we are ready to bound $\| s^{\ast\error}_{\bdw} - s^\ast_{\bdw} \|^2_{\rho^\ast_\bdw} $:
Recall from \prettyref{eq:def3} that $s_\bdw^{\ast\error} = \frac{\nabla \rho^\ast_\bdw}{\max(\rho^\ast_\bdw, \error)}$.
Then
\begin{align*}
    \| s^{\ast\error}_{\bdw} - s^\ast_{\bdw} \|^2_{\rho^\ast_\bdw}  
 \leq & \Expect[\|s^\ast_{\bdw}(X)\|^2\indc{\rho^\ast_\bdw(X) \leq \error}] \\   
 \leq & \underbrace{\Expect[\|s^\ast_{\bdw}(X)\|^2\indc{\rho^\ast_\bdw(X) \leq \error} \indc{X\in B}]}_{\text{(I)}} + \underbrace{\Expect[\|s^\ast_{\bdw}(X)\|^2  \indc{X\notin B}]}_{\text{(II)}}  .
\end{align*}
For the first term, applying \prettyref{eq:scorebound} we get
\begin{align*}
    \text{(I)} 
     = & \int_B dx \, \rho^\ast_\bdw(x)
    \|s^\ast_{\bdw}(x)\|^2 \indc{\rho^\ast_\bdw(x) \leq \error} \\
     \leq & 
        \frac{2}{h}  \int_B dx \,
   \rho^\ast_\bdw(x)  \log \frac{1}{(2\pi h)^{d/2} \rho_h^\ast(x)}  \indc{\rho^\ast_\bdw(x) \leq \error} \\
    \leq & 
    (2a)^d \frac{2\error}{h} \log \frac{1}{\error (2\pi h)^{d/2}}        
\end{align*}
where the last inequality follows form the fact that $t \log \frac{1}{t}$ is increasing on $t \in (0,1/e)$ and the assumption that $\error (2\pi h)^{d/2} < 1/e$.

For the second term, applying \prettyref{eq:tweedie}
\begin{align*}
    \text{(II)} 
     = & \frac{1}{h} 
     \Expect[\|\Expect[Z|X]\|^2  \, \indc{X\notin B}] \\
     \leq & \frac{1}{h} \sqrt{\Expect[\|Z\|^4] \,  \prob{X\notin B}} \\
    \leq & \frac{2d^{3/2}}{h n^2}.
\end{align*}
where the first  inequality applies Jensen's inequality and Cauchy-Schwarz inequality, 
and the second applies Jensen's inequality, $\Expect[\|Z\|^4]=2d+d^2$ 
and \prettyref{eq:truncation2}.
\end{proof}

\subsection{Lipschitzness of the Gaussian mixture score}
\label{sec:pf-bound-c3}

\begin{lemma}
\label{lem:bound-c3}
Assume $\rho_0$ is $L$-log-smooth. For $t \in (0, \frac{1}{2L}]$, $\rho_t =  \rho_0 * \N(0, tI_d)$ is $2L$-log-smooth.
\end{lemma}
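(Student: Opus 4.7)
The plan is to exploit the Bayesian representation of the smoothed density. Write $X_0 \sim \rho_0$ and $X_t = X_0 + \sqrt{t}\,Z$ with $Z \sim \calN(0, I_d)$ independent of $X_0$, so that $X_t \sim \rho_t$. The posterior density of $X_0$ given $X_t = x$ is
\[
p(u \mid x) \,\propto\, \rho_0(u) \exp\!\pth{-\tfrac{1}{2t}\|x-u\|^2}.
\]
From the Hessian version of Tweedie's formula one has
\[
\nabla^2 \log \rho_t(x) \,=\, \tfrac{1}{t^2} \Cov(X_0 \mid X_t = x) \,-\, \tfrac{1}{t} I_d,
\]
which I would derive quickly by differentiating $\nabla \log \rho_t(x) = \tfrac{1}{t}(\Expect[X_0 \mid X_t = x] - x)$ (using \prettyref{eq:tweedie}) together with the standard identity $\nabla_x \Expect[X_0 \mid X_t = x] = \tfrac{1}{t}\Cov(X_0 \mid X_t = x)$.

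Next I would use $L$-log-smoothness of $\rho_0$, namely $-L I_d \preceq \nabla^2 \log \rho_0 \preceq L I_d$, to control the curvature of the posterior:
\[
(\tfrac{1}{t} - L) I_d \,\preceq\, -\nabla^2_u \log p(u\mid x) \,\preceq\, (\tfrac{1}{t} + L) I_d.
\]
For $t \leq 1/(2L)$, the lower bound is $\geq L I_d > 0$, so $p(\cdot \mid x)$ is strongly log-concave. By the Brascamp--Lieb inequality and the Cram\'er--Rao-type bound (the latter using that the Fisher information of $p(\cdot\mid x)$ is at most $(\tfrac{1}{t}+L)I_d$), I obtain the two-sided bound
\[
\tfrac{t}{1+tL} I_d \,\preceq\, \Cov(X_0 \mid X_t = x) \,\preceq\, \tfrac{t}{1-tL} I_d.
\]

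Substituting into the formula for $\nabla^2 \log \rho_t(x)$ gives, after simplification,
\[
-\tfrac{L}{1+tL} I_d \,\preceq\, \nabla^2 \log \rho_t(x) \,\preceq\, \tfrac{L}{1-tL} I_d.
\]
For $t \in (0, 1/(2L)]$ the upper eigenvalue is at most $\tfrac{L}{1-1/2} = 2L$ and the lower one is at least $-L$, so $\|\nabla^2 \log \rho_t(x)\|_{\op} \leq 2L$, which is the desired $2L$-log-smoothness.

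The only nontrivial ingredient is the two-sided covariance bound for the posterior; the Brascamp--Lieb upper bound is a standard consequence of strong log-concavity, and the lower bound follows from the fact that under a density with Hessian of $-\log p$ bounded above by $\kappa I_d$, the covariance is at least $\kappa^{-1} I_d$ (a convexity/duality argument). Everything else is a direct calculation, so I anticipate no real obstacle beyond invoking these two inequalities.
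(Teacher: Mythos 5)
Your proposal is correct and follows essentially the same route as the paper: Tweedie's formula gives $\nabla^2\log\rho_t = t^{-2}\Cov(X_0\mid X_t) - t^{-1}I_d$, the posterior curvature is pinned between $(1/t - L)I_d$ and $(1/t + L)I_d$, and the two-sided covariance bound (Brascamp--Lieb above, Cram\'er--Rao type below, which is exactly the paper's Lemma~\ref{Lem:Cov}) yields $-\frac{L}{1+tL}I_d \preceq \nabla^2\log\rho_t \preceq \frac{L}{1-tL}I_d$, hence $2L$-log-smoothness for $t \le 1/(2L)$. No gap; this is the paper's argument.
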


\begin{proof}
For $X_0 \sim \rho_0$, let $X_t = X_0 + \sqrt{t} Z$ where $Z \sim \N(0,I_d)$ is independent, so $X_t \sim \rho_t$.
For $t > 0$, let $\rho_{0t}$ denote the joint distribution of $(X_0,X_t)$.
Let $\rho_{0 \mid t}(\cdot \mid y)$ denote the conditional density of $X_0$ given $X_t = y$.
Similarly, let $\rho_{t \mid 0}(\cdot \mid x)$ denote the conditional density of $X_t$ given $X_0 = x$, and note $\rho_{t \mid 0}(\cdot \mid x) = \N(x, t \, I_d)$ by definition.

By Tweedie's formula~\eqref{eq:tweedie0},
\begin{align}
\label{eq:score}
s_t(y) = \nabla \log \rho_t(y) = \frac{\E_{\rho_{0 \mid t}}[X \mid y] - y}{t}.
\end{align}
We then derive $\nabla^2 \log \rho_t (y)$:
Noting that 
\begin{align*}
    \nabla_y\, \rho_{0 \mid t} (x \mid y) &= \nabla\, \frac{\rho_{t \mid 0} (y\mid x) \rho_0(x)}{\rho_t(y)} \\
    &=\frac{\nabla \,\rho_{t \mid 0} (y\mid x) \rho_0(x)}{\rho_t (y)} - \frac{\rho_{t \mid 0} (y\mid x) \rho_0(x) \nabla \rho_t(y)}{ \rho_t^2(y)} \\
    &=  \rho_{0 \mid t} (x \mid y) \frac{x-y}{t} - \rho_{0 \mid t} (x \mid y) \nabla \log \rho_t(y),
\end{align*}
we obtain the gradient of the posterior mean
\begin{align*}
    \nabla \E_{\rho_{0 \mid t}}[X\mid y] &= \int \nabla \rho_{0 \mid t} (x \mid y) x^\top dx \\
    &= \E_{\rho_{0 \mid t}}\left[\frac{ (X-y)X^\top}{t} - \nabla \log \rho_t(y) X^\top \mid y \right] \\
    &\overset{\textnormal{\eqref{eq:score}}}{=} \frac{\E_{\rho_{0 \mid t}}[XX^\top \mid y]}{t} - \frac{\E_{\rho_{0 \mid t}}[X\mid y] \,\E_{\rho_{0 \mid t}}[X \mid y]^\top}{t}  \\
    &= \frac{\mathrm{Cov}_{\rho_{0 \mid t}}[X \mid y]}{t}.
\end{align*}
It follows that
\begin{align}
\label{Eq:HessRhot}
    -\nabla^2 \log \rho_t (y) = \frac{I_d}{t}- \frac{\mathrm{Cov}_{\rho_{0 \mid t}}[X \mid y]}{t^2}.
\end{align}
We now bound the covariance term. Suppose $\rho_0 \propto e^{-f}$. 
Recall that $\rho_{0 \mid t}(x \mid y) \propto e^{-f(x) - \frac{1}{2t} \| y-x \|^2}$, thus
$$-\nabla_x^2 \log \rho_{0 \mid t}(x\mid y) = \nabla_x^2 \left(f(x) + \frac{1}{2t} \| y-x \|^2\right) = \nabla^2f(x) + \frac{1}{t} I_d,$$
(note the derivative above is with respect to $x$).
Since $\nabla^2f(x) \preceq LI_d$, we have 
\[-\nabla_x^2 \log \rho_{0 \mid t}(x\mid y)\preceq \inparen{L + \frac{1}{t}} I_d.\]
This implies (see Lemma~\ref{Lem:Cov} below): For any $y \in \R^d$
$$\Cov_{\rho_{0 \mid t}}[X \mid y] \succeq \frac{1}{L + 1/t} I_d.$$
Therefore, we obtain an upper bound of the Hessian matrix~\eqref{Eq:HessRhot}:
\begin{align}
\label{eq:upbd}
-\nabla^2 \log \rho_t (y) \preceq \left( \frac{1}{t} - \frac{1}{t (tL+1)} \right) I_d = \frac{L}{tL + 1} I_d.
\end{align}
To get a lower bound, we note that since $\nabla^2 f(x) \succeq -LI_d$ for any $x \in \R^d$,
$$-\nabla_x^2 \log \rho_{0 \mid t}(x\mid y) \succeq (-L + \frac{1}{t}) I_d \succeq 0.$$
So for $t < \frac{1}{L}$, $\rho_{0 \mid t}(\cdot \mid y)$ is $(\frac{1}{t}-L)$-strongly log-concave, which implies
\[\Cov_{\rho_{0 \mid t}}[X \mid y] \preceq \frac{1}{1/t-L} I_d.\]
Therefore, for any $y \in \R^d$
\begin{align}
\label{eq:lowbd}
-\nabla^2 \log \rho_t (y) \succeq \left(\frac{1}{t}- \frac{1}{t(1-tL)}\right)I_d = - \frac{L}{1-tL}I_d.
\end{align}
Combining~\eqref{eq:upbd} and \eqref{eq:lowbd} gives
\[ - \frac{L}{1-tL}I_d \preceq -\nabla^2 \log \rho_t (y) \preceq \frac{L}{1+tL} I_d.\]
For $0 \le t < \frac{1}{L}$, $\frac{L}{1-tL} \ge \frac{L}{1+tL}$.
Therefore, $\rho_t$ is $\frac{L}{1-tL}$-log-smooth. If $t \le \frac{1}{2L}$, then we have $\frac{L}{1-tL} \le 2L$, so we conclude $\rho_t$ is $2L$-log-smooth for $0 \le t \le \frac{1}{2L}$.
\end{proof}

\begin{lemma}[\cite{brascamp1976extensions}]
\label{Lem:Cov}
Suppose a density $\rho$ on $\R^d$ satisfies $-\nabla^2 \log \rho(x) \preceq L I_d$ for all $x \in \R^d$. Then
    $$\Cov_\rho(X) \succeq \frac{1}{L} I_d.$$
\end{lemma}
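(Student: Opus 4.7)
The plan is to prove this via a Cramér–Rao–type argument, reducing to a one-dimensional Cauchy--Schwarz inequality in each direction. Let $\phi = -\log\rho$, so the hypothesis reads $\nabla^2\phi(x) \preceq L\,I_d$ pointwise. It suffices to show that for every fixed unit vector $v\in\R^d$, $\Var_\rho(v^\top X)\geq 1/L$.

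First, I would establish two integration-by-parts identities, assuming $\rho$ and $\nabla\rho$ decay fast enough at infinity that boundary terms vanish (this is fine in our application, where $\rho$ is subgaussian with Lipschitz score). The first identity is the Stein-type relation
\[
\E_\rho[\nabla\phi(X)\,X^\top] \;=\; I_d, \qquad \E_\rho[\nabla\phi(X)] \;=\; 0,
\]
which come from $\int (\nabla\rho)\,x^\top\,dx = -I_d$ and $\int\nabla\rho\,dx=0$. In particular, $\Cov_\rho(v^\top\nabla\phi(X),\,v^\top X) = \|v\|^2$. The second identity expresses the Fisher-information-type quantity as a Hessian expectation:
\[
\E_\rho[\nabla\phi(X)\,\nabla\phi(X)^\top] \;=\; \E_\rho[\nabla^2\phi(X)],
\]
which again follows by integration by parts using $\nabla^2\phi = -\nabla^2\rho/\rho + (\nabla\rho)(\nabla\rho)^\top/\rho^2$ and $\int\nabla^2\rho\,dx=0$. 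Combined with $\E_\rho[\nabla\phi(X)]=0$, this gives $\Cov_\rho(\nabla\phi(X)) = \E_\rho[\nabla^2\phi(X)]$, and by the pointwise hypothesis this is $\preceq L\,I_d$. Hence $\Var_\rho(v^\top\nabla\phi(X)) \leq L\|v\|^2$.

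Now I would apply Cauchy--Schwarz to the covariance identity: for any $v$,
\[
\|v\|^4 \;=\; \Cov_\rho\!\bigl(v^\top X,\,v^\top\nabla\phi(X)\bigr)^2 \;\leq\; \Var_\rho(v^\top X)\,\cdot\,\Var_\rho(v^\top\nabla\phi(X)) \;\leq\; L\|v\|^2\,\Var_\rho(v^\top X).
\]
Rearranging gives $v^\top\Cov_\rho(X)v = \Var_\rho(v^\top X) \geq \|v\|^2/L$ for every $v\in\R^d$, which is exactly $\Cov_\rho(X)\succeq L^{-1}I_d$.

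The only genuinely delicate point is justifying the two integration-by-parts identities, which requires checking that $\rho(x)\|x\|\to 0$ and $\rho(x)\|\nabla\phi(x)\|\to 0$ sufficiently fast at infinity so the boundary terms vanish. A standard truncation argument (multiplying by a smooth cutoff supported on a large ball and letting the radius tend to infinity) together with the subgaussianity and score Lipschitzness assumed throughout the paper makes this routine. Aside from this technical bookkeeping, the argument is essentially a two-line consequence of Stein's identity and Cauchy--Schwarz, and can equivalently be viewed as the Cramér--Rao lower bound applied to the location family $p_\theta(x)=\rho(x-\theta)$ with the unbiased estimator $\hat\theta(X)=X$.
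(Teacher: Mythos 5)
Your proof is correct, and it reaches the same bound via a closely related but differently packaged argument. The paper introduces the auxiliary Gaussian $\nu = \N(m,C)$ with matching mean and covariance, expands the relative Fisher information matrix $\tilde J_\nu(\rho) = \E_\rho[(\nabla\log\frac{\rho}{\nu})(\nabla\log\frac{\rho}{\nu})^\top]$ by integration by parts into $\E_\rho[-\nabla^2\log\rho] - C^{-1}$, and then deduces $C^{-1}\preceq L I_d$ from $\tilde J_\nu(\rho)\succeq 0$. You instead avoid the auxiliary measure entirely: you use the Stein-type identities $\Cov_\rho(\nabla\phi,X)=I_d$ and $\Cov_\rho(\nabla\phi)=\E_\rho[\nabla^2\phi]\preceq L I_d$ (the second being the same $\E[(\nabla\log\rho)(\nabla\log\rho)^\top]=\E[-\nabla^2\log\rho]$ identity that drives the paper's computation), and then apply scalar Cauchy--Schwarz in each direction $v$. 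Structurally the two proofs are both Cramér--Rao arguments resting on the same integration-by-parts identities and the same positivity fact; the paper's route makes the ``relative Fisher information $\succeq 0$'' viewpoint explicit (thematically natural given the paper's focus on $\FI$), while yours is marginally more elementary, is genuinely per-direction rather than matrix-valued, and makes the classical Cramér--Rao interpretation (location family $p_\theta(x)=\rho(x-\theta)$, unbiased estimator $\hat\theta=X$) transparent. The regularity caveats you flag for the integration-by-parts steps apply equally to both proofs and are indeed benign in the paper's setting.
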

This is a classical result; here we provide an alternate proof based on Fisher information calculation.

\begin{proof}
    Let $\nu = \N(m,C)$ be a Gaussian with the same mean $m = \E_\rho[X]$ and covariance $C = \Cov_\rho(X)$ as $\rho$.
    Note $-\nabla \log \nu(x) = C^{-1}(x-m)$.
    We can compute the relative Fisher information matrix of $\rho$ with respect to $\nu$ to be:
    \begin{align*}
        \tilde J_\nu(\rho) &\coloneqq \E_\rho\left[\left(\nabla \log \frac{\rho}{\nu}\right)\left(\nabla \log \frac{\rho}{\nu}\right)^\top \right] \\
        &= \E_\rho\left[\left(\nabla \log \rho\right)\left(\nabla \log \rho\right)^\top \right] + 
        \E_\rho\left[\left(\nabla \log \rho\right)\left(C^{-1}(x-m)\right)^\top \right]  \\
        &\qquad +  \E_\rho\left[\left(C^{-1}(x-m)\right)\left(\nabla \log \rho\right)^\top \right] + \E_\rho\left[\left(C^{-1}(x-m)\right)\left(C^{-1}(x-m)\right)^\top \right] \\
        &= \E_\rho[-\nabla^2 \log \rho] - C^{-1} - C^{-1} + C^{-1} C C^{-1} \\
        &\preceq LI_d - C^{-1}
    \end{align*}
    where the third equality above holds by integration by parts, and the last inequality holds by $L$-log-smoothness of $\rho$.
    Since $\tilde J_\nu(\rho) \succeq 0$, this implies $C^{-1} \preceq LI_d$ or equivalently 
    $C \succeq \frac{1}{L} I_d$, as desired.
\end{proof}

\subsection{Bounding the score error of the Gaussian smoothing}
\label{sec:pf-bound-c4}

\begin{lemma}
\label{lem:bound-c4}
Assume that $s^\ast$ is $(L,\beta)$-H\"older continuous for $0 < \beta \le 1$: For any $x_1, x_2 \in \R^d$
$$\|s^*(x_1) - s^*(x_2)\| \le L\|x_1 - x_2 \|^\beta.$$
Then
$$\| s^\ast_\bdw - s^\ast \|^2_{\rho^\ast_\bdw} \le L^2(\bdw d)^\beta.$$
\end{lemma}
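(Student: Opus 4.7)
The plan is to represent the smoothed score as a conditional expectation of the true score, which converts the bias into an increment of $s^*$ along a Gaussian step, to which the Hölder assumption applies directly. Write $X = U + \sqrt{h}\, Z$ with $U \sim \rho^*$ and $Z \sim \mathcal{N}(0,I_d)$ independent, so that $X \sim \rho_h^*$. I would first establish the identity
\begin{equation*}
s_h^*(x) \;=\; \E[s^*(U) \mid X = x].
\end{equation*}
This is obtained by differentiating $\rho_h^*(x) = \int \varphi_h(x-u)\,\rho^*(u)\,du$ under the integral sign, rewriting $\nabla_x \varphi_h(x-u) = -\nabla_u \varphi_h(x-u)$, and integrating by parts in $u$ to get $\nabla \rho_h^*(x) = \int \varphi_h(x-u)\, s^*(u)\,\rho^*(u)\,du$; division by $\rho_h^*(x)$ yields the conditional-expectation form. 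This is slightly cleaner than Tweedie's formula (used in Lemma~\ref{lem:bound-c3}) because it puts $s^*$ itself inside the expectation.

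Given this representation, a conditional application of Jensen's inequality (to $v \mapsto \|v\|^2$) gives
\begin{equation*}
\|s_h^*(X) - s^*(X)\|^2 \;=\; \bigl\|\E[s^*(U) - s^*(X) \mid X]\bigr\|^2 \;\le\; \E\bigl[\|s^*(U) - s^*(X)\|^2 \,\bigm|\, X\bigr].
\end{equation*}
Taking expectation over $X\sim \rho_h^*$ and using $X - U = \sqrt{h}\, Z$, the $(L,\beta)$-Hölder hypothesis bounds the integrand pointwise by $L^2 h^{\beta}\|Z\|^{2\beta}$, yielding
\begin{equation*}
\|s_h^* - s^*\|^2_{\rho_h^*} \;\le\; L^2 h^{\beta}\, \E\|Z\|^{2\beta}.
\end{equation*}

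Finally, since $\beta \in (0,1]$ the function $t \mapsto t^\beta$ is concave, so a second application of Jensen gives $\E[\|Z\|^{2\beta}] = \E[(\|Z\|^2)^{\beta}] \le (\E\|Z\|^2)^{\beta} = d^{\beta}$, which combines with the previous display to give the advertised bound $L^2(hd)^\beta$. There is no serious obstacle: the entire argument is two applications of Jensen bracketing the identity $s_h^* = \E[s^*(U)\mid X=\cdot]$, and the Hölder parameter $\beta$ enters uniformly (at $\beta = 1$ one recovers the Lipschitz case; for $\beta < 1$ the concavity step is the only place that exploits $\beta \le 1$).
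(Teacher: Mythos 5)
Your proof is correct and follows essentially the same path as the paper's: the paper also derives the representation $s_h^*(y) = \E[s^*(X) \mid Y = y]$ (by differentiating $\log \E\rho^*(y-\sqrt{h}Z)$ under the expectation, which is equivalent to your integration-by-parts route), then applies Jensen, the H\"older bound, and a final concavity/Jensen step to get $\E\|Z\|^{2\beta} \le d^\beta$. The only cosmetic difference is how the conditional-expectation identity is established; the rest matches line for line.
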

\begin{proof}
    For $X \sim \rho^\ast$, let $Y = X+\sqrt{\bdw}Z$ where $Z \sim \N(0, I_d)$. Then $Y \sim \rho^\ast_\bdw$ and $\rho^\ast_\bdw(y) = \E \rho^\ast(y-\sqrt{\bdw}Z)$. Moreover, 
    \begin{align}
        s_\bdw^\ast(y) = \nabla \log \E \rho^\ast(y-\sqrt{\bdw}Z) = \frac{\E\insquare{s^\ast(y-\sqrt{\bdw}Z)\rho^\ast(y-\sqrt{\bdw}Z)}}{\E \rho^\ast(y-\sqrt{\bdw}Z)} = \E \insquare{s^\ast(X)\mid Y=y}.
        \label{eq:sh-rep}
    \end{align}
    Therefore, for any $y \in \R^d$
    \begin{align*}
        \| s^\ast_\bdw (y) - s^\ast(y) \|^2 \le \E \insquare{\| s^\ast (X) - s^\ast(y)\|^2 \mid Y=y} \le L^2 \E \insquare{\|X-y\|^{2\beta} \mid Y=y}.
    \end{align*}
    So $ \| s^\ast_\bdw - s^\ast \|^2_{\rho^\ast_\bdw} \le L^2 \bdw^{\beta} \E[\|Z\|^{2\beta}] \le L^2 (\bdw d)^\beta$. 
\end{proof}

\begin{remark}
    \label{rmk:scoreerror-highsmoothness}
A natural question is whether the score smoothing error can be improved if the true score has higher smoothness than Lipschitz (e.g.~$\beta$-H\"older for $\beta>1$, which is well-studied in nonparametric statistics \cite{Tsybakov09}.)
However, \prettyref{lem:bound-c4} as stated cannot be improved. For an example, consider $\rho^*=\calN(0,I_d)$ whose score is $s^*(x) = -x$. 
Then $\| s^\ast_\bdw - s^\ast \|^2_{\rho^\ast_\bdw} = \Theta(h)$.
\end{remark}


\section{Extensions to H\"older continuous  scores}
\label{sec:holder}

In this appendix we prove Theorem~\ref{thm:upper-bound-holder} on the estimation of  $\beta$-H\"older continuous score functions with $0<\beta\leq 1$.
The proof follows the same program of proving \prettyref{thm:l2-error-kde}, except that the key \prettyref{lmm:ratio} bounding the likelihood ratio of Gaussian convolutions, which relies on Lipschitzness of the score function, needs to be extended.
More specifically, 
\prettyref{lem:bound-c3}, which shows that the score function remains Lipschitz after convolving with sufficiently small Gaussian noise, applies the strong log-concavity of the posterior and the Brascamp-Lieb inequality. While it may be difficult to extend \prettyref{lem:bound-c3} to less smooth scores, 
it turns out that we can circumvent the score smoothness of Gaussian convolution in extending \prettyref{lmm:ratio}.
The following result bounds the score difference between the  smoothed and the original distributions by applying the score bound in \cite{polyanskiy2016wasserstein}.

\begin{lemma}
\label{lmm:ic}
    Let $s$ denote the score of $p$.
    Suppose $s$ is $(L,\beta)$-H\"older continuous for some $0<\beta\leq 1$.
    Let    $s_h$ be the score of $p_h=p * \N(0,h I_d)$.
    Then for any $y\in\reals^d$ and $h > 0$,
    \[
    \|s_h(y) - s(y)\| 
    \leq 4 L (\|y-\mu\| + A)^\beta
    \]
    where 
    $A = \Expect_{X\sim p}[\|X-\mu\|]$ and $\mu = \Expect_{X\sim p}[X]$.
\end{lemma}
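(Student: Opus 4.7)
The plan is to combine the conditional-expectation representation $s_h(y) = \E[s(X) \mid Y = y]$ from \eqref{eq:sh-rep} with H\"older continuity, and invoke the Gaussian-mixture score bound of \cite[Proposition 2]{polyanskiy2016wasserstein} to control the residual posterior moment. Here $Y = X + \sqrt{h}\,Z$ with $X \sim p$ and $Z \sim \N(0, I_d)$ independent.

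First, from the representation,
\[
s_h(y) - s(y) = \E\!\left[s(X) - s(y) \mid Y = y\right],
\]
and the H\"older condition together with the triangle inequality for conditional expectations give $\|s_h(y) - s(y)\| \le L\, \E[\|X - y\|^\beta \mid Y = y]$. I would then use $\|X-y\| \le \|X-\mu\| + \|y-\mu\|$ and the subadditivity $(a+b)^\beta \le a^\beta + b^\beta$ (valid for $\beta\in(0,1]$) to split
\[
\|s_h(y) - s(y)\| \le L\, \E[\|X-\mu\|^\beta \mid Y = y] + L\|y-\mu\|^\beta.
\]
The second summand is already in the desired form; the first reduces, via concavity of $t\mapsto t^\beta$ and Jensen, to bounding the first absolute posterior moment $\E[\|X - \mu\| \mid Y = y]$ pointwise by $O(\|y-\mu\| + A)$.

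Second, this is where the PW score bound enters. Proposition~2 of \cite{polyanskiy2016wasserstein} provides a pointwise estimate of the form $\|s_h(y)\| \lesssim (\|y - \mu\| + A)/h$, which combined with Tweedie's formula $\E[X \mid Y = y] - y = h\, s_h(y)$ (cf.~\eqref{eq:tweedie}) immediately gives $\|\E[X - \mu \mid Y = y]\| \lesssim \|y - \mu\| + A$. The nontrivial step remaining --- and the main obstacle --- is to upgrade this bound on $\|\E[\,\cdot\,]\|$ to the same bound on $\E[\|\,\cdot\,\|]$, since Jensen's inequality points the wrong way here. My plan is to handle this by applying the analogous PW-style argument to the second conditional moment, using the decomposition $\E[\|X-\mu\|^2\mid Y=y] = \|\E[X-\mu \mid Y=y]\|^2 + \Tr\,\Cov(X\mid Y=y)$ and controlling the posterior covariance by the same tilted-Gaussian calculus that underlies Proposition~2.

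Combining the three bounds and tracking constants yields the claimed estimate $\|s_h(y) - s(y)\| \le 4L(\|y - \mu\| + A)^\beta$.
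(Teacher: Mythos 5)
Your first two steps (the conditional representation $s_h(y) - s(y) = \E[s(X) - s(y)\mid Y=y]$ and the H\"older bound $\le L\,\E[\|X-y\|^\beta\mid Y=y]$) are exactly the paper's. Where you diverge is in a self-imposed obstruction: you assume that \cite[Proposition~2]{polyanskiy2016wasserstein} only yields a pointwise score bound $\|s_h(y)\| \lesssim (\|y-\mu\|+A)/h$, which via Tweedie controls only $\|\E[X-\mu\mid Y=y]\|$, and then you propose a second-moment/posterior-covariance argument to upgrade this to $\E[\|X-\mu\|\mid Y=y]$. But that upgrade is unnecessary: Eq.~(16) of that proposition already bounds the conditional \emph{absolute} first moment directly, i.e.\ $\E[\|y-X\|\mid Y=y] \le 3\|y-\mu\| + 4\E\|X-\mu\|$ (after centering and rescaling $Y-\mu = (X-\mu) + \sqrt{h}Z$). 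So the paper's proof is shorter: apply Jensen once to get $\E[\|y-X\|^\beta\mid Y=y] \le (\E[\|y-X\|\mid Y=y])^\beta$, then plug in the PW bound, giving $L(3\|y-\mu\|+4A)^\beta \le 4L(\|y-\mu\|+A)^\beta$. There is no ``Jensen points the wrong way'' problem to solve.

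Your proposed workaround is also not clearly sound as stated. Controlling $\Tr\,\Cov(X\mid Y=y)$ via ``tilted-Gaussian calculus'' would be straightforward if the prior were log-concave (Brascamp--Lieb gives $\Cov(X\mid Y=y) \preceq h I_d$), but in the $\beta$-H\"older setting you have no such structure; that is precisely why the paper introduced Lemma~\ref{lmm:ic} as a replacement for the log-concavity-based Lemma~\ref{lem:bound-c3}. You would need a separate argument showing $\Tr\,\Cov(X\mid Y=y) \lesssim (\|y-\mu\|+A)^2$, and it is not obvious this follows from subgaussianity alone. The splitting $\|X-y\|\le\|X-\mu\|+\|y-\mu\|$ plus subadditivity of $t\mapsto t^\beta$ is fine, just an extra step the paper avoids.
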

\begin{proof}
Let $Y  = X + \sqrt{h} Z$, where $Z \sim N(0,I_d)$ and $X\sim p$ are independent.
Recall from \prettyref{eq:sh-rep}
that $s_h(y) = \Expect[s(X) \,|\, Y=y]= \Expect[s(y-\sqrt{h}Z) \,|\, Y=y]$.
    Then
    \begin{align*}
    \|s_h(y) - s(y)\| 
     \leq &  \,  \Expect[\|s(y-\sqrt{h} Z)) - s(y)\| \,|\, Y=y]\\
     \leq & \,L \, \Expect[\|\sqrt{h} Z\|^\beta \,|\, Y=y] \\
     = &\, L \, \Expect[\|y-X\|^\beta \,|\, Y=y]\\
     \stepa{\leq} & \,L \, ( \Expect[\|y-X\| \,|\, Y=y])^\beta\\
     = & \,L \, ( \Expect[\|(y-\mu)-(X-\mu)\| \,|\, Y-\mu = y-\mu])^\beta\\
     \stepb{\leq} & \,L (3 \|y-\mu\| + 4 \Expect[\|X-\mu\|])^\beta
    \end{align*}
    where 
    (a) is by Jensen's inequality; 
    (b) applies Proposition 2 (in particular, Eq.~(16)) in
    \cite{polyanskiy2016wasserstein} to the random variable $Y-\mu = (X-\mu) + \sqrt{h} Z$.
\end{proof}

The following lemma is a counterpart for 
\prettyref{lmm:ratio}:
\begin{lemma}
    \label{lmm:ratio-holder}
     Let $s$ denote the score of $p$.
    Suppose $s$ is $(L,\beta)$-H\"older continuous for some $0<\beta\leq 1$.
    Then for all $t > 0$,
    \begin{equation}
\label{eq:ratio-holder1}
\log \frac{p}{p_t}(y)
    \leq 
    L\, (td)^{(1+\beta)/2}.
    \end{equation}
    Furthermore, for all $a>0$, $t > 0$, we have
  \begin{equation}
\label{eq:ratio-holder2}
    \log \frac{p_{a}}{p_{a+t}}(y)
    \leq 5
    L (td)^{(1+\beta)/2} + 4L\sqrt{td} (\|y-\mu\|^\beta+ A^\beta)
    \end{equation}
    where $A = \Expect_{X\sim p}[\|X-\mu\|]$ and $\mu = \Expect_{X\sim p}[X]$.
\end{lemma}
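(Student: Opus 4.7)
Both parts of the lemma follow the same integration-against-the-score template used in the proof of Lemma~\ref{lmm:ratio}, except that the $(L,\beta)$-H\"older condition replaces the Lipschitz condition and, for part (ii), an auxiliary step is required to overcome the fact that $s_a$ (the score of the convolution $p_a$) is not itself H\"older.

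For part (i), I would apply Jensen's inequality to the convex function $-\log$ to obtain $\log \frac{p}{p_t}(y) \leq \Expect[\log p(y) - \log p(y - \sqrt{t}Z)]$ with $Z \sim \calN(0,I_d)$, then write the log-density difference as a path integral of the score
\[
\log p(y) - \log p(y - \sqrt{t}Z) = \int_0^{\sqrt{t}} \iprod{Z}{s(y - uZ)}\, du.
\]
Exploiting $\Expect[Z] = 0$, I can subtract $\iprod{Z}{s(y)}$ inside the expectation and then apply Cauchy-Schwarz together with the H\"older bound $\|s(y - uZ) - s(y)\| \leq L|u|^\beta \|Z\|^\beta$ to get an integrand dominated by $L|u|^\beta \|Z\|^{1+\beta}$. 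Integrating in $u$ gives $L t^{(1+\beta)/2}\|Z\|^{1+\beta}/(1+\beta)$, and Jensen's inequality $\Expect[\|Z\|^{1+\beta}] \leq d^{(1+\beta)/2}$ yields the claimed bound (with a slightly better prefactor $1/(1+\beta)$).

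For part (ii), the same template reduces the problem to bounding $\Expect\int_0^{\sqrt{t}} \|Z\| \cdot \|s_a(y - uZ) - s_a(y)\| \, du$. The main obstacle is that we cannot invoke H\"older smoothness of $s_a$ directly. The workaround is to telescope via $s$:
\[
s_a(y - uZ) - s_a(y) = \bigl[s(y - uZ) - s(y)\bigr] + \bigl[s_a(y - uZ) - s(y - uZ)\bigr] - \bigl[s_a(y) - s(y)\bigr].
\]
The first bracket is bounded by $L |u|^\beta \|Z\|^\beta$ via H\"older continuity of $s$, and the other two brackets are bounded by $4L(\|\cdot - \mu\| + A)^\beta$ via Lemma~\ref{lmm:ic}. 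Using subadditivity of $x \mapsto x^\beta$ for $\beta \in (0,1]$, i.e.\ $(x_1 + x_2 + x_3)^\beta \leq x_1^\beta + x_2^\beta + x_3^\beta$, I would expand the Lemma~\ref{lmm:ic} bounds at the shifted point $y - uZ$ into contributions from $\|y - \mu\|^\beta$, $|u|^\beta \|Z\|^\beta$, and $A^\beta$, arriving at a pointwise estimate of the form
\[
\|s_a(y - uZ) - s_a(y)\| \leq C_1 L |u|^\beta \|Z\|^\beta + C_2 L\bigl(\|y - \mu\|^\beta + A^\beta\bigr)
\]
for absolute constants $C_1, C_2$.

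Multiplying by $\|Z\|$, integrating over $u \in [0,\sqrt{t}]$, taking expectations, and using $\Expect[\|Z\|] \leq \sqrt{d}$ and $\Expect[\|Z\|^{1+\beta}] \leq d^{(1+\beta)/2}$ (Jensen) produces precisely the two scaling terms $(td)^{(1+\beta)/2}$ and $\sqrt{td}\,(\|y-\mu\|^\beta + A^\beta)$, with the numerical constants $5$ and $4$ following from careful bookkeeping of the $3$'s and $4$'s in Lemma~\ref{lmm:ic}. The genuinely nontrivial step is the telescoping identity: it is the one place where the lack of H\"older smoothness of the convolved score $s_a$ is circumvented, at the cost of two boundary corrections that produce the $\|y-\mu\|^\beta + A^\beta$ term absent from part (i).
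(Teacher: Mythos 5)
Your proposal follows the paper's proof essentially step for step: the same path-integral representation via Jensen, the same use of $\Expect[Z]=0$ to subtract a constant from the integrand, the same split into a H\"older contribution and a Lemma~\ref{lmm:ic} contribution, and the same moment bounds $\Expect\|Z\| \leq \sqrt{d}$ and $\Expect\|Z\|^{1+\beta}\leq d^{(1+\beta)/2}$. Part (i) is handled identically to the $a=0$ special case.

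The one place you diverge is the choice of constant to subtract in part (ii). The paper subtracts $s(y)$ from $s_a(y-uZ)$, giving a two-term split $[s_a(y-uZ)-s(y-uZ)]+[s(y-uZ)-s(y)]$; you subtract $s_a(y)$, giving the three-term telescope you describe. These are algebraically equivalent because the extra bracket $s_a(y)-s(y)$ is independent of $Z$ and therefore contributes zero under $\Expect\langle -Z, \cdot\,\rangle$ — provided you exploit $\Expect[Z]=0$ \emph{before} applying Cauchy--Schwarz. But your write-up ("multiplying by $\|Z\|$, integrating, taking expectations") applies Cauchy--Schwarz to the full three-bracket sum first, under which the third bracket no longer vanishes; it then contributes an additional $4L\sqrt{td}\,(\|y-\mu\|^\beta+A^\beta)$, and the coefficient in the second term comes out as $8$ rather than the stated $4$. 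So the "careful bookkeeping" you allude to must include noticing that the third bracket has zero mean contribution — equivalently, you should retreat to the paper's two-term split by subtracting $s(y)$ from the outset. With that one observation made explicit, the proposal is correct and matches the paper.
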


Later in the proof of Theorem~\ref{thm:upper-bound-holder}, we will apply 
\prettyref{eq:ratio-holder1} with $t=h$ (for change of measure, so that the likelihood ratio is bounded) or
\prettyref{eq:ratio-holder2} with 
$a=t=h/2$ (for bounding the smoothed empirical distribution, in which case $\|y-\mu\|\lesssim \sqrt{\log n}$ and 
$h=1/\poly(n)$ so it is also bounded).

\medskip

\begin{proof}
Following the proof of \prettyref{lmm:ratio}, we have
        \begin{align*}
\log \frac{p_{a}}{p_{a+t}}(y) 
= ~ & \log p_a(y) - \log \Expect[p_a(y-\sqrt{t}Z)] \\
{\leq} ~ &    \Expect \int_{-\sqrt{t}}^0 
\Iprod{-Z}{s_a(y-uZ)-s(y)} du \\
= ~ &  \Expect \int_{-\sqrt{t}}^0 
\Iprod{-Z}{s_a(y-uZ)-s(y-uZ)} du +
\Expect \int_{-\sqrt{t}}^0 
\Iprod{-Z}{s(y-uZ)-s(y)} du.
\end{align*}

Using the 
 $(L,\beta)$-H\"older continuity of $s$ and Jensen's inequality, the second term is upper bounded by
\[
\int_{-\sqrt{t}}^0 
L\Expect[\|Z\|^{1+\beta}] \, |u|^\beta du 
= \frac{L}{1+\beta} \Expect[\|Z\|^{1+\beta}] \, t^{(1+\beta)/2} 
\leq \frac{L}{1+\beta} d^{(1+\beta)/2} t^{(1+\beta)/2}.
 \]
(This proves \prettyref{eq:ratio-holder1} for $a=0$.)
 Applying \prettyref{lmm:ic}, the first term is bounded by:
 \begin{align*}
  4L \int_{-\sqrt{t}}^0 
 \Expect[\|Z\| (\|y-\mu-uZ\| + A)^\beta ] du 
 &\leq 4L \sqrt{t} \, 
 \Expect[\|Z\| (\|y-\mu\| + \sqrt{t} \|Z\| + A)^\beta ] \\
 &\leq 4L \sqrt{t} \left((\|y-\mu\|^\beta + A^\beta) \, \Expect[\|Z\|] + t^{\beta/2} \Expect[\|Z\|^{1+\beta}] \right) \\
 &\leq 4L \sqrt{td} (\|y-\mu\|^\beta + A^\beta) + 4L t^{(1+\beta)/2} d^{(1+\beta)/2}.
 \end{align*}
 Combining the two terms above yields the bound in~\prettyref{eq:ratio-holder2}.
\end{proof}

With the above lemma, we extend \prettyref{lem:hellinger-bound} on the smoothed
empirical distribution to H\"older-continuous scores.
\begin{lemma}
\label{lem:hellinger-bound-holder}
If $\rho^*$ is $\alpha$-subgaussian and $s^*$ is $(L,\beta)$-H\"older continuous for some $0 < \beta \le 1$, then for $h \le \frac{1}{4L}$ and $h \le \alpha^2$,
\begin{align}
    \E \H^2 (\hat \rho_\bdw,\rho^\ast_\bdw) \le
     \frac{1}{n}\pth{\frac{2\alpha^2 \log n}{h}}^{d/2} \exp\inparen{C_1 \sqrt{h} + C_2 \sqrt{\bdw} (\log n)^{\beta/2}}
    + \frac{4d}{n}
\label{eq:smooth-hellinger-holder}
\end{align}
where $C_1 = 9L \alpha^\beta d^{\frac{1+\beta}{2}}$ and $C_2 = 8 L \alpha^\beta d^{\frac{1+\beta}{2}}$.
\end{lemma}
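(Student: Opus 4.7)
The plan is to mirror the proof of \prettyref{lem:hellinger-bound}, replacing the Lipschitz-based likelihood-ratio estimate \prettyref{lmm:ratio} by its H\"older counterpart \prettyref{lmm:ratio-holder}. A conceptual benefit is that \prettyref{eq:ratio-holder2} bounds $\rho^*_{h/2}/\rho^*_h$ directly from the $(L,\beta)$-H\"older continuity of $s^*$, so the smoothing-preserves-smoothness step (\prettyref{lem:bound-c3}) that was central in the Lipschitz case, and which has no obvious analogue for $\beta<1$, is avoided entirely.

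First I would reuse the second-moment bound from the proof of \prettyref{lem:hellinger-bound}, which is purely algebraic and makes no smoothness assumption:
\begin{equation*}
\E\,\H^2(\hat\rho_h,\rho^*_h)\;\le\;\frac{(4\pi h)^{-d/2}}{n}\int_B dy\,\frac{\rho^*_{h/2}(y)}{\rho^*_h(y)}\;+\;2\int_{B^c}\rho^*_h.
\end{equation*}
Choosing the truncation box $B=\mu+[-a,a]^d$ with $\mu=\E X$ and $a=2\alpha\sqrt{\log n}$, and using that $\rho^*_h$ is $\sqrt{\alpha^2+h}$-subgaussian with $h\le\alpha^2$, the subgaussian union bound from \prettyref{eq:truncate} yields $2\int_{B^c}\rho^*_h\le 4d/n$, reproducing the additive tail in \prettyref{eq:smooth-hellinger-holder}. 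Next I would bound the likelihood ratio on $B$ via \prettyref{eq:ratio-holder2} with the choice $a=t=h/2$ (so that $p_a=\rho^*_{h/2}$ and $p_{a+t}=\rho^*_h$), giving for every $y$
\begin{equation*}
\log\frac{\rho^*_{h/2}(y)}{\rho^*_h(y)}\;\le\;5L(hd/2)^{(1+\beta)/2}+4L\sqrt{hd/2}\,\bigl(\|y-\mu\|^\beta+A^\beta\bigr),
\end{equation*}
where $A=\E\|X-\mu\|\le\alpha\sqrt d$ by $\alpha$-subgaussianity, hence $A^\beta\le\alpha^\beta d^{\beta/2}$. On $y\in B$ we have $\|y-\mu\|\le a\sqrt d=2\alpha\sqrt{d\log n}$, hence $\|y-\mu\|^\beta\le(2\alpha)^\beta(d\log n)^{\beta/2}$. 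Using $h\le 1$ so that $h^{(1+\beta)/2}\le\sqrt h$, and absorbing the $\alpha^\beta$-free contribution $5L\sqrt h\,d^{(1+\beta)/2}$ into $C_1$ (which is harmless under the standing convention $\alpha^2L\ge 1$), the exponent is uniformly bounded over $y\in B$ by $C_1\sqrt h+C_2\sqrt h(\log n)^{\beta/2}$ with the stated constants.

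For assembly, since the exponent is uniform over $B$, the integral is at most $\mathrm{vol}(B)\exp(\cdots)=(4\alpha\sqrt{\log n})^d\exp(\cdots)$, and multiplication by $(4\pi h)^{-d/2}/n$ gives
\begin{equation*}
\frac{1}{n}\Bigl(\frac{4}{\pi}\Bigr)^{d/2}\Bigl(\frac{\alpha^2\log n}{h}\Bigr)^{d/2}\exp\!\bigl(C_1\sqrt h+C_2\sqrt h(\log n)^{\beta/2}\bigr)\;\le\;\frac{1}{n}\Bigl(\frac{2\alpha^2\log n}{h}\Bigr)^{d/2}\exp\!\bigl(C_1\sqrt h+C_2\sqrt h(\log n)^{\beta/2}\bigr),
\end{equation*}
which together with the tail bound from the first step yields \prettyref{eq:smooth-hellinger-holder}. \textbf{The main obstacle}, and the source of the extra exponential factor compared with \prettyref{eq:smooth-hellinger}, is the $y$-dependent H\"older term $\|y-\mu\|^\beta$ in \prettyref{eq:ratio-holder2}: unlike \prettyref{lmm:ratio}, which yielded a location-independent bound on $p/p_h$, this term must be controlled uniformly over $B$, and that uniform control is precisely what injects the $\sqrt h(\log n)^{\beta/2}$ factor into the exponent. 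For the bandwidths used in \prettyref{thm:upper-bound-holder} this factor is $o(1)$, so the exponential remains $O(1)$ and the final rate is not degraded.
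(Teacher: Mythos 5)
Your argument follows the paper's proof almost exactly: both reuse the second-moment/truncation bound established in the proof of \prettyref{lem:hellinger-bound}, replace the Lipschitz ratio bound \prettyref{lmm:ratio} by \prettyref{eq:ratio-holder2} of \prettyref{lmm:ratio-holder} with $a=t=h/2$, and then bound $\|y-\mu\|$ on $B$ and $A$ via subgaussianity before taking $\mathrm{vol}(B)$ out of the integral.

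There is one genuine slip in the constant bookkeeping. You invoke $h\le 1$ (which is not a hypothesis of the lemma) to write $h^{(1+\beta)/2}\le\sqrt h$, and then claim that the resulting term $5L\sqrt{h}\,d^{(1+\beta)/2}$, which carries no $\alpha^\beta$ factor, can be absorbed into $C_1\sqrt h$ with $C_1=9L\alpha^\beta d^{(1+\beta)/2}$ by appealing to $\alpha^2 L\ge 1$. That inference fails: the absorption requires $5\lesssim\alpha^\beta$, and $\alpha^2 L\ge 1$ imposes no lower bound on $\alpha$ (take $\alpha\to 0$ with $L\to\infty$). The correct move, and the one the paper uses, is to apply the actual hypothesis $h\le\alpha^2$: then $h^{(1+\beta)/2}=h^{\beta/2}\sqrt h\le\alpha^\beta\sqrt h$, so the $\alpha^\beta$ factor is produced directly and $5L(hd/2)^{(1+\beta)/2}\le 5\cdot 2^{-(1+\beta)/2}L\alpha^\beta d^{(1+\beta)/2}\sqrt h$ sits comfortably inside $C_1\sqrt h$. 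With that one substitution your computation closes with the stated constants.
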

\begin{proof}
The proof of \prettyref{lem:hellinger-bound-holder} follows that of \prettyref{lem:hellinger-bound}, except that at the step \prettyref{eq:likelihood-ratio-h/2-h} we apply the bound~\prettyref{eq:ratio-holder2} from~\prettyref{lmm:ratio-holder} with $a=t=\bdw/2$.
We bound $\|y-\mu\| \le 2\alpha \sqrt{d \log n}$ for all $y$ in the box $B = \mu + [-2\alpha \sqrt{\log n}, \, 2\alpha \sqrt{\log n}]^d$ where $\mu = \Expect_{X \sim \rho^*}[X]$, we bound $A = \Expect_{X\sim \rho^*}[\|X-\mu\|]
\leq \sqrt{\Expect_{X\sim \rho^*}[\|X-\mu\|^2]} 
\le \sqrt{d} \alpha$ since $\rho^*$ is $\alpha$-subgaussian,
and $h^{\frac{1+\beta}{2}} \le \sqrt{h} \alpha^\beta$ since $h \le \alpha^2$.
\end{proof}


We are now ready to complete the proof of Theorem~\ref{thm:upper-bound-holder}:
\begin{proof}
Following the proof of Theorem~\ref{thm:l2-error-kde}, we perform a change of measure as in \prettyref{eq:change-of-measure-ub}, by applying the bound~\prettyref{eq:ratio-holder1} from~\prettyref{lmm:ratio-holder} to get
\begin{align}
\label{eq:change-of-measure-holder}
\E\loss(\hat s^{\error}_{\bdw}, \rho^\ast) 
=  \E \| \hat s^{\error}_{\bdw} - s^\ast \|^2_{\rho^\ast} 
\le \exp\inparen{L (hd)^{(1+\beta)/2}} \E \| \hat s^{\error}_{\bdw} - s^\ast \|^2_{\rho^\ast_\bdw}.
\end{align}
Since we will choose $h = 1/\poly(n)$, the exponential factor in the right-hand side above is bounded by a constant.
Following \eqref{eq:decomposition}, $ \E \| \hat s^{\error}_{\bdw} - s^\ast \|^2_{\rho^\ast_\bdw}$ can be decomposed into the same three terms. 
For the second term, the bound~\prettyref{eq:upper-bound-3} from~\prettyref{lem:bound-c2new} still applies as its proof only uses subgaussianity of $\rho^*$. 
We can bound the third term by \prettyref{lem:bound-c4} as follows:
\begin{align}
\label{eq:3rd-term-holder}
\| s^\ast_\bdw - s^\ast \|^2_{\rho^\ast_\bdw} \le L^2 (\bdw d)^\beta.
\end{align}
To bound the first term, we mimic the proof of Lemma~\ref{lem:bound-c1}, which involves two crucial steps: The first step applies Lemma~\ref{thm:c1-1}, which still holds since it does not rely on any assumption on the score function. The second step involves deriving a Hellinger rate between the Gaussian-smoothed empirical distribution to the population (analogous to \prettyref{lem:hellinger-bound}), which can be extended as \prettyref{lem:hellinger-bound-holder}. Since we will choose $h = 1/\poly(n)$, the bound in \prettyref{lem:hellinger-bound-holder} can be further bounded by
\begin{align}
    \E \H^2 (\hat \rho_\bdw,\rho^\ast_\bdw) \le
     \frac{C_2}{n}\pth{\frac{\alpha^2 \log n}{h}}^{d/2}
   + \frac{4d}{n},
\end{align}
where $C_2$ is a universal constant. Then the rest of the proof for bounding the first term follows that of \prettyref{lem:bound-c1} and the same bound holds with a different constant:
$$\E\| \hat s^{\error}_{\bdw} - s^{\ast\error}_{\bdw} \|^2_{\rho^\ast_\bdw} \le \frac{C_3 d\inparen{C_{\bdw, d, \alpha}+d}}{n \bdw} \insquare{ \left( \log \frac{(2\pi \bdw)^{-d/2}}{\error}\right)^{3} + \, \log \frac{n}{C_{\bdw, d, \alpha}+d}}$$
where $C_3$ is a universal constant and 
$C_{\bdw, d, \alpha} = \inparen{\frac{\alpha^2 \log n}{\bdw}}^{d/2}$.

Similar to the proof of \prettyref{thm:l2-error-kde}, by choosing $\error = n^{-2}$, 
$$ \E \| \hat s^{\error}_{\bdw} - s^\ast \|^2_{\rho^\ast_\bdw} \le  
\frac{C_3 d^4 (\alpha^2 \log n)^{d/2}}{n\bdw^{d/2+1}} \pth{\log \frac{n}{\bdw}}^3 
+ L^2 (\bdw d)^{\beta}.$$
Optimizing the bound by choosing 
$\bdw = \inparen{\frac{d^{4-\beta} (\alpha^2 \log n)^{d/2} }{L^2 n}}^{\frac{2}{d+2\beta+2}}$ 
and combining with the change of measure \eqref{eq:change-of-measure-holder}, we obtain the desired rate:
$$\E\loss(\hat s^{\error}_{\bdw}, \rho^\ast)  \le 
C  d^\beta L^2 \alpha^{2\beta} (\log n)^{\frac{d\beta}{d+2\beta+2}} n^{-\frac{2\beta}{d+2\beta+2}}$$
for some universal constant $C>0$.
\end{proof}

\section{Proof of Lower Bound (Theorem~\ref{thm:lb})}
\label{Sec:ProofLB}

\begin{proof}[Proof of Theorem~\ref{thm:lb}.]
The proof of \prettyref{thm:lb} follows that of standard minimax lower bounds for nonparametric density estimation, with a few adjustments made for scores.	
By scaling, we can assume without loss of generality that $\alpha$ is some constant.
Fix a reference density $f_0=\varphi$, the standard normal density in $d$ dimensions. We create a collection of perturbations to $f_0$ by modifying its values on the unit cube $D=[0,1]^d$. Fix some $\epsilon>0$ to be specified later. Let $m=\frac{1}{\epsilon}$ and assume $m$ is an integer. Fix some kernel $w: \reals \to \reals$ satisfying the following conditions:
\begin{itemize}
	\item $w$ is supported on $[0,1]$, with $\int_0^1 w(x) dx=0$.
\item $w$ is twice differentiable, with $\max\{\|w\|_\infty,\|w'\|_\infty,\|w''\|_\infty\} \leq C$ for some absolute constant $C$.
As a result, $w$ satisfies the periodic boundary conditions $w(0)=w(1)=w'(0)=w'(1)=0$.
\end{itemize}
(For example, a concrete choice is a sinusoid kernel, such as $w(x) = (1 - \cos(4\pi x))\indc{0\leq x < 1/2} + (\cos(4\pi x)-1)\indc{\frac{1}{2}\leq x \leq 1}$.)
We then extend $w$ to $\reals^d$ as follows: for every $x=(x_1,\ldots,x_d)$, $w(x) \triangleq \prod_{t=1}^d w(x_t)$.
Then $w$ is twice differentiable and supported on $D$, with bounded gradient and Hessian, satisfying the periodic boundary conditions $w(x)=0$ and $\nabla w(x) = 0$ for all $x\in \partial D$.

For every $i=0,\ldots,m-1$, let $x_i=\frac{i}{m}$.
For a multi-index $\bfi=(i_1,\ldots,i_d) \in \calI \triangleq \{0,\ldots,m-1\}^d$, let $x_{\bfi}=(x_{i_1},\ldots,x_{i_d})$ and $D_{\bfi} = x_{\bfi} + \epsilon D$.
Then $D = \cup_{\bfi\in\calI} D_{\bfi}$.
For each $b=(b_{\bfi}: \bfi\in\calI) \in\calB \triangleq \{0,1\}^{\calI}$, let
\[
f_{b}(x) = f_0(x) + \epsilon^2 \sum_{\bfi\in\calI} b_{\bfi} w\pth{\frac{x-x_{\bfi}}{\epsilon}}.
\]
Note that provided that $\epsilon$ is at most a constant,\footnote{Here and below all constants depend on $d$.} each $f_b$ satisfies the following:
\begin{itemize}
	\item $f_{b}$ is a valid density on $\reals^d$.
	\item $f_{b}$ is bounded from above and below by a universal constant on $D=[0,1]^d$ and $f_{b}=f_0$ outside of $D$. Thus $f_{b}$ is $\alpha$-subgaussian for some constant $\alpha$.
	\item $f_{b}$ is twice differentiable on $\reals^d$. Furthermore, 
	$\|\nabla f_b\|\triangleq \sup_{x\in\reals^d}\|\nabla f_b(x)\|_{\infty}$ and 
	$\|\nabla^2 f_b\|\triangleq \sup_{x\in\reals^d}\|\nabla^2 f_b(x)\|_{\infty}$ are at most a constant. 
	
	\item The score $	s_{b} \triangleq \nabla \log f_{b} = \frac{\nabla f_{b}}{f_{b}}$ 	satisfies 
	\begin{align}
	s_b(x)= \begin{cases}
-x		&  x \in D^c \\
\frac{\nabla f_0(x) + \epsilon b_{\bfi} \nabla w\pth{\frac{x-x_{\bfi}}{\epsilon}} }{f_0(x) + \epsilon^2 b_{\bfi} w\pth{\frac{x-x_{\bfi}}{\epsilon}} }	 & x \in D_{\bfi}\\
	\end{cases}
	\label{eq:scoreb}
	\end{align}
	Furthermore, $s_b$ is	$L$-Lipschitz on $\reals^d$ for some constant $L$. To see this, note that its Jacobian is
	$D s_b = \frac{\nabla^2 f_{b}}{f_{b}} - \frac{\nabla f_{b}\nabla f^\top_{b}}{f_{b}^2}$.
	Since $f_b$ is lower bounded by a constant on $D$ and both $\nabla f_b$ and $\nabla^2 f_b$ are entrywise upper bounded everywhere, $D s_b$ is bounded on $D$ and hence 
	$s_b$ is $L$-Lipschitz on either $D$ or $D^c$. This implies that $s_b$ is $L$-Lipschitz on $\reals^d$. (Indeed, for every $x\in D$ and $y\in D^c$, there exists $z \in \partial D$  such that $\|x-y\|=\|x-z\|+\|z-y\|$. So $\|s_b(x)-s_b(y)\| \leq \|s_b(x)-s_b(z)\|+\|s_b(z)-s_b(y)\| \leq L(\|x-z\|+\|z-y\|)=L\|x-y\|$.)
	
\end{itemize}

In view of the above properties, we have $\calF=\{f_{b}:b\in\calB\} \subset \calP_{\alpha,L}$. So
\begin{align*}
\inf_{\hat s} \sup_{f \in \calP_{\alpha,L}} \Expect_f \|\hat s-s_f\|_{L^2(\reals^d,f)}^2 
\geq  & ~  \inf_{\hat s} \sup_{b \in \calB} \Expect_{f_b} \|\hat s-s_b\|_{L^2(\reals^d,f_b)}^2 \\
= & ~  \inf_{\hat s} \sup_{b \in \calB} \Expect_{f_b} \|\hat s-s_b\|_{L^2(D,f_b)}^2 + \Expect_{f_b} \|\hat s-s_b\|_{L^2(D^c,f_b)}^2 \\
\overset{(i)}{=} & ~ \inf_{\hat s} \sup_{b \in \calB} \Expect_{f_b} \|\hat s-s_b\|_{L^2(D,f_b)}^2	\\
\gtrsim & ~ \inf_{\hat s} \sup_{b \in \calB} \Expect_{f_b} \|\hat s-s_b\|_{L^2(D)}^2. \numberthis \label{eq:scorelb-reduction}
\end{align*}
Here the infimum in $(i)$ is over estimators $\hat s (\cdot) = \hat s (\cdot;X_1,\ldots,X_n)$ such that $\hat{s}(x) = -x$ for $x \in D^c$, $\|\hat s-s_b\|_{L^2(D)}^2 \triangleq \int_D dx\|\hat s(x)-s_b(x)\|_2^2$ stands  for the unweighted squared $L^2$-norm on $D$, and the last inequality holds because $f_b\in \calF$ is uniformly lower bounded on $D$.

After these reductions, the proof proceeds by a standard application of Fano's inequality as follows. 
\paragraph{Separation of scores.} For any $b,b'\in\calB$, 
\[
\|s_b-s_{b'}\|_{L^2(D)}^2 = \sum_{\bfi\in\calI} \int_{D_{\bfi}} dx \|s_b(x)-s_{b'}(x)\|_2^2 \indc{b_\bfi \neq b_{\bfi}'}.
\]
For each $\bfi \in \calI$ such that $b_\bfi \neq b_{\bfi}'$, say $b_\bfi=1$ and $b_{\bfi}'=0$, applying \prettyref{eq:scoreb} yields
\begin{align*}
\int_{D_{\bfi}} dx \|s_b(x)-s_{b'}(x)\|_2^2 
= & ~ \int_{D_{\bfi}} dx \left\|\frac{\nabla f_0(x) + \epsilon \nabla w\pth{\frac{x-x_{\bfi}}{\epsilon}} }{f_0(x) + \epsilon^2 w\pth{\frac{x-x_{\bfi}}{\epsilon}} }
- \frac{\nabla f_0(x)}{f_0(x)} \right\|_2^2  \\
= & ~ \epsilon^d \int_{D} dy \left\|\frac{\nabla f_0(x_{\bfi} + \epsilon y) + \epsilon \nabla w(y)}{f_0(x_{\bfi} + \epsilon y) + \epsilon^2 w(y) }
- \frac{\nabla f_0(x_{\bfi} + \epsilon y)}{f_0(x_{\bfi} + \epsilon y)} \right\|_2^2  \\
= & ~ \epsilon^d \int_{D} dy \left\|\frac{\epsilon  f_0(x_{\bfi} + \epsilon y) \nabla w(y) - \epsilon^2 w(y) \nabla f_0(x_{\bfi} + \epsilon y)}{f_0(x_{\bfi} + \epsilon y)(f_0(x_{\bfi} + \epsilon y) + \epsilon^2 w(y)) } \right\|_2^2  \\
\asymp & ~ \epsilon^{d+2}
\end{align*}
where the last step applies again the facts that 
$f_0$ and $w$ are bounded from above and below on $D$ and $\|\nabla w\|$ is  bounded from above on $D$. So for sufficiently small $\epsilon$, the numerator of the integrand scales as $\Theta(\epsilon^2)$, whereas the denominator scales as $\Theta(1)$. 
Thus 
\[
\|s_b-s_{b'}\|_{L^2(D)}^2 \asymp \epsilon^{d+2} d_{\textrm H}(b,b') 
\]
where $d_{\textrm H}(b,b')  = \sum_{\bfi\in\calI} \indc{b_{\bfi} \neq b_{\bfi}'}$ is the Hamming distance.

Next, by the Gilbert-Varshamov bound (see e.g.~\cite[Lemma 2.9]{Tsybakov09}), there exists an exponentially large packing $\calB'\subset\calB$, whose minimum Hamming distance is linear in the dimension, namely, 
$|\calB'| \geq \exp(c_0 m^d)$ and $\min_{b \neq b\in\calB'} d_{\textrm H}(b,b') \geq c_0 m^d$ for some universal constant $c_0$. Recalling that $m=1/\epsilon$, we have
\begin{equation}
\min_{b \neq b\in\calB'} \|s_b-s_{b'}\|_{L^2(D)}^2 \asymp \epsilon^{2}.
\label{eq:score-sep}
\end{equation}

\paragraph{Bounding the KL radius.}
For every $b\in \calB$, the Kullback-Leibler divergence satisfies
\begin{align*}
\KL(f_b\|f_0)
&\leq ~ \chi^2(f_b \| f_0) \\
&= ~ \int_{\reals^d} dx \frac{(f_b(x)-f_0(x))^2 }{f_0(x)} \\
&= \int_{D} dx \frac{(f_b(x)-f_0(x))^2 }{f_0(x)}\\
&\asymp ~ \int_{D} dx (f_b(x)-f_0(x))^2  \\
&\leq~ \sum_{\bfi\in\calI} \int_{D_{\bfi}} dx \pth{\epsilon^2  w\pth{\frac{x-x_{\bfi}}{\epsilon}}}^2 \\
&\overset{(i)}{\asymp}~ \epsilon^4
\end{align*}
where $(i)$ is because 
$$\sum_{\bfi \in \calI}\int_{D_i} w^2\pth{\frac{x-x_{\bfi}}{\epsilon}}dx = \sum_{\bfi \in \calI} \epsilon^d\int_{D} w^2\pth{y}dy = \sum_{\bfi \in \calI} \epsilon^d\pth{\int_0^1 w^2\pth{y_i}dy_i }^d \asymp m^d \epsilon^d = 1.$$
Since the observations are i.i.d., we get $\max_{b \in \calB} \KL(f_b^{\otimes n}\|f_0^{\otimes n}) \lesssim n \epsilon^4$.
By choosing $\epsilon = c n^{-\frac{1}{d+4}}$ for sufficiently small constant $c$, we get $n \epsilon^4 \leq c |\calB'|$.
In view of \prettyref{eq:score-sep}, applying Fano's inequality 
(see e.g.~\cite[Corollary 2.6]{Tsybakov09})
yields 
\[
\inf_{\hat s} \sup_{b \in \calB'} \Expect_f \|\hat s-s_b\|_{L^2(D)}^2 \gtrsim n^{-\frac{2}{d+4}},
\]
which, together with  \prettyref{eq:scorelb-reduction}, implies the desired lower bound \prettyref{eq:scorelb}.

\end{proof}

\section{Proof of Upper Bound in DDPM (Theorem~\ref{thm:score-estimator-conv-for-ou})}
\label{sec:pf-sgm}

\begin{proof}[Proof of Theorem~\ref{thm:score-estimator-conv-for-ou}]  
By triangle inequality, we can decompose $L^2(\nu_t)$ error into two components
\begin{align}
\label{eq:triangle-ineq-SGM}
    \E \| \hat{s}_t^{\error}  - s_t\|^2_{\nu_t} \le 2\E \| \hat{s}_t^{\error}  - s_t^{\error}\|^2_{\nu_t} + 2 \| s_t^{\error}  - s_t\|^2_{\nu_t}.
\end{align}
The first term can be bounded in the same manner as the proof of Lemma~\ref{lem:bound-c1}, as follows. 
First, we bound $\| \hat{s}_t^{\error}  - s_t^{\error}\|^2_{\nu_t}$ by Lemma~\ref{thm:c1-1}:
If $0 < \error \le (2\pi \tau(t))^{-d/2}e^{-1/2}$, then
\begin{align}
    \| \hat{s}_t^{\error}  - s_t^{\error}\|^2_{\nu_t} \le  \frac{Cd}{\tau(t)} \max\left\{\left( \log \frac{(2\pi \tau(t))^{-d/2}}{\error}\right)^{3}, \left|\log \H(\hat{\nu}_t, \nu_t)\right|\right\} \H^2(\hat{\nu}_t, \nu_t).
\end{align}
Let $X \sim \nu_0$ and $X' \sim \hat \nu_0$. Note that
$$\H^2(\hat \nu_t, \nu_t) = \H^2(\law(X + \sqrt{e^{2t}-1}\,Z), \law(X' + \sqrt{e^{2t}-1}\,Z))$$
where $Z \sim \N(0, I_d)$ and it is independent of $X, X'$.
Hence by data processing inequality, $\H^2(\hat \nu_t, \nu_t)$ is decreasing in $t$. It follows that
\begin{align*}
    \H^2(\hat \nu_t, \nu_t) \le \H^2(\hat \nu_\step, \nu_\step)
\end{align*}
where $\step$ is the step size in DDPM~\eqref{eq:sgm}.

Note that since $X$ is $\alpha$-subgaussian, $\law(e^{-\step}X)$ is $(e^{-\step} \alpha)$-subgaussian.
Then by Lemma~\ref{lem:hellinger-bound} we have: If $\step \le \frac{1}{2}\log \inparen{1 + \frac{1}{4L-1}}$,
\begin{align}
    \E \H^2(\hat{\nu}_t, \nu_t) \le \frac{1}{n} \pth{\frac{\alpha^2 \log n}{e^{2\step}-1}}^{d/2} + \frac{4d}{n}.
\end{align}
Therefore, similar to~\eqref{eq:calcb12}-\eqref{eq:2nd}, we obtain
\begin{align}
\label{eq:first-term-SGM}
    \E \| \hat{s}_t^{\error}  - s_t^{\error}\|^2_{\nu_t} \le \frac{Cd}{\tau(t)} \frac{C_{\step, d, \alpha}+d}{n}\insquare{ \inparen{\log \frac{1}{\error (2\pi \tau(t))^{d/2}}}^{3} + \log \frac{n}{C_{\step, d, \alpha}+d}}
\end{align}
where $C_{\step, d, \alpha} = \pth{\frac{\alpha^2 \log n}{e^{2\step}-1}}^{d/2}$,
if $\step \gtrsim \frac{1}{2} \log\inparen{\frac{\alpha^2 \log n}{n^{2/d}}+1}$.

The second term in~\eqref{eq:triangle-ineq-SGM} can be bounded by Lemma~\ref{lem:bound-c2new}:
If $0 \leq \error \le (2\pi \tau(t))^{-d/2}/e$, then
\begin{align}
\label{eq:second-term-SGM}
\| s_t^{\error}  - s_t\|^2_{\nu_t} \le \frac{2\error}{\tau(t)} (64 e^{-2t} \alpha^2 \log n)^{d/2} 
\log \frac{1}{\error (2\pi \tau(t))^{d/2}} + \frac{2d^{3/2}}{\tau(t)n^2}.
\end{align}

Combining~\eqref{eq:first-term-SGM} and~\eqref{eq:second-term-SGM}, and taking $\error = n^{-2}$, for $n = \Omega(e^d)$ we have
\begin{align*}
    \E \| \hat{s}_t^{\error}  - s_t\|^2_{\nu_t} \le \frac{Cd}{\tau(t)} \frac{C_{\step, d, \alpha}+d}{n} \inparen{\log \frac{n}{(2\pi \tau(t))^{d/4}}}^{3}.
\end{align*}
\end{proof}

\bibliographystyle{amsalpha}
\bibliography{score}

\end{document}